\newtheorem{Theorem}{Theorem}
\newtheorem{Corollary}{Corollary}
\newtheorem{Lemma}{Lemma}
\newtheorem{Definition}{Definition}
\newtheorem{Remark}{Remark}
\def\mcD{\mathcal{D}}
\def\mcE{\mathcal{E}}
\def\mfD{\mathfrak{D}}
\def\f{\vec{f}}
\def\e{e}
\def\R{\mathbb{R}}
\def\T{\mathbb{T}}
\def\Rp{\mathbb{R}_+}
\def\bRp{\bar{\mathbb{R}}_+}
\def\barDt{\frac{\overline{D}}{Dt}}
\def\bbarDt{\frac{\overline{\overline{D}}}{Dt}}
\def\eps{\varepsilon}
\newcommand{\supp}{\text{supp }}
\newcommand{\ptl}{\partial}
\numberwithin{equation}{section}
\begin{document}

\title[H\"{o}lder Solutions to 3D Prandtl System]{H\"{o}lder Continuous Solutions to the Three-dimensional Prandtl System}
\author[]{Tianwen Luo}
\address{Yau Mathematical Sciences Center, Tsinghua University, China}
\email{twluo@mail.tsinghua.edu.cn}
\author[ ]{Zhouping Xin}
\address{The Institute of Mathematical Sciences and department of
	mathematics, The Chinese University of Hong Kong, Hong Kong}
\email{zpxin@ims.cuhk.edu.hk}
\date{}
\begin{abstract}
	Adapting the convex integration technique introduced in \cite{dLSz5} and subsequently developed in \cite{IsettVicol,IsettOh16,Buckmaster2013transporting},  we construct  H\"{o}lder continuous weak solutions to the three dimensional Prandtl system and some other models with vertical viscosity.
\end{abstract}

\maketitle

\section{Introduction and Main Results}
In this paper, we will consider the three-dimensional Prandtl system, which is given by
\begin{align}
\begin{cases}\label{eq:Prandtl}
\ptl_t u + (u \cdot \nabla_x ) u  + v \ptl_y u + \nabla_x P = \ptl_{yy}^2 u,\\
\nabla_x \cdot u + \ptl_y v = 0,\\
u|_{t=0} = u_0, \quad
(u,v)|_{y=0}=0, \quad
\lim_{y\to+\infty}u = U.
\end{cases}
\end{align}
Here $x=(x_1,x_2) \in \mathbb{T}^2$ and $y \in \Rp$ denote the tangential and the vertical components of the space variable, respectively; $\nabla_x=(\ptl_{x_1},\ptl_{x_2})$ denotes the tangential gradient;
$u=(u^1,u^2)(t,x,y)$ and $v=v(t,x,y)$ denote the tangential and the vertical velocities;   $U(t,x)$ and $P(t,x)$ denote the tangential velocity and the pressure on the boundary $\{y=+\infty\}$ of the outer Euler flow, respectively, which satisfy 
\begin{align*}
\ptl_t U + (U\cdot \nabla_x ) U  + \nabla_x P = 0.
\end{align*}

The motion of a fluid as governed by the incompressible Navier-Stokes equations, may be well approximated by smooth inviscid flows in the limit of large Reynold numbers, except near the physical boundary where the effect of viscosities plays a significant role. There a thin layer forms in which the tangential velocity of the flow drops rapidly to zero at the boundary (no-slip condition). This layer is called the boundary layer, and of thickness $\sqrt{\nu}$ with $\nu$ being the viscosity coefficient. The theory of boundary layers was first proposed by Prandtl in 1904.  In Prandtl's theory, the flow outside the layer can be described approximately by the Euler equations, however, within the boundary layer, the flow is governed by a degenerate mixed-type system appropriately reduced from the Navier-Stokes equations, known as  the Prandtl system.

There has been a lot of mathematical literature on the Prandtl system with a focus on the two space-dimension case. The local-wellposedness and the rigorous justification of the viscous limit as the superposition of the Prandtl and Euler equations has been proved for analytic functions in \cite{SC98}. Recently these results are obtained for Gevrey classes in \cite{GM15,GMM16} and for Sobolev data with vorticity away from the boundary in \cite{Maekawa14}.  On the other hand, under the  monotonicity assumption on the tangential velocity of the data, the local well-posedness of classical solutions was obtained  by Oleinik and her co-workers\cite{OS99} using the Crocco transfrom and recently obtained  using energy methods in \cite{AWXY14,MW12}. Furthermore, the  global well-posedness of weak and smooth solutions was proved in \cite{XZ04,XZZ},
assuming further a favorable pressure condition. The finite-time blow-up of smooth solutions was obtained in \cite{EE97} and results of instability without the monotonicity assumption in  \cite{Grenier00,GN11}. However, despite a lot of progress, the validity of Prandtl's theory in the general case remains an open problem.

There are fewer results on the three dimensional Prandtl system. The viscous limit is obtained in  the analytic framework \cite{SC98}, and the recent work \cite{FTZ16} for Sobolev data  with vorticity away from the boundary using energy methods. The three dimensional Prandtl system appears to be quite challenging, mainly due to the possible onsets of secondary flows.

In the three dimensional boundary layers, the flows near  the boundary may develop motions transverse to the outer flow $U$, which are called secondary flows in the literature. It tends to occur when the pressure gradient does not align with the direction of the outer flow $U$; see \cite{Moore56} for discussions. This poses great challenges to the analysis.  Recently,
Liu, Wang and Yang proved the local existence of solutions to the three dimensional Prandtl systems with a special structure in \cite{LWY14a}, and also obtained ill-posedness results when the structural assumptions were violated in \cite{LWY14b}.
 It should be noted that the special structure in \cite{LWY14a} corresponds to the non-transversal of the tangential velocity and thus excludes secondary flows.

The main purpose of this paper is to obtain weak solutions to the system \eqref{eq:Prandtl} with tangential velocities transverse to the outflow $U$,  indicating the onsets of  secondary flows.

The weak solutions to the initial-boundary value problem \eqref{eq:Prandtl} is defined as follows.

\begin{Definition}
	A function $(u,v) \in C^0(\bRp \times \T^2 \times \bRp)$  is said to be a weak solution to the problem \eqref{eq:Prandtl}, if it solves the equations in the sense of distribution and satisfies the boundary conditions in \eqref{eq:Prandtl}.
\end{Definition}

The main results  can be stated as follows. Given $\alpha, \beta \in (0,1)$, for a continuous function $f$ defined on a closed space-time domain $\Omega$, let $[f]_{{\alpha,\beta}(\Omega)}$ denotes
\begin{align*}
[f]_{{\alpha,\beta}(\Omega)} =  \sup_{(t,x,y),(t',x',y') \in \Omega}\frac{|f(t,x,y)-f(t',x',y')|}{|t-t'|^{\alpha}+|x-x'|^{\alpha}+|y-y'|^{\beta}}.
\end{align*}
For any set $E \subset \Rp  \times \Rp$ and any positive numbers $\rho, \rho'$, we denote
\begin{align}
N(E;\rho, \rho') = \{ (t,y): |t-t_0| < \rho, |y-y_0| < \rho', \text{ for some } (t_0,y_0) \in E  \}. \label{def:N-t,y}
\end{align}
\begin{Theorem}\label{Thm:Prandtl}
	Suppose that $(u_C,v_C)$ is a classical solution to the system \eqref{eq:Prandtl} and  $(\underline{u},\underline{v})$  is a smooth   perturbation of $(u_C,v_C)$   such that the difference $(\underline{u} - u_C,\underline{v}-v_C) \in C_c^\infty(\Rp \times \T^2 \times \Rp)$ has compact support and satisfies
	\begin{align}
	\nabla_x \cdot \underline{u} + \ptl_y \underline{v} &= 0, \quad \text{ in } \Rp \times \T^2 \times \Rp, \label{eq:div-bu-bv}\\
	\int_{\T^2} (\underline{u}-u_C)(t,x,y)  dx &= 0, \quad \text{ for any } (t,y) \in \Rp \times \Rp. \label{u-u_S}
	\end{align}
	Let $\rho>0$ be a given positive number such that
	\begin{align*}
	\overline{N(\text{supp}_{t,y}(\underline{u} - u_C,\underline{v}-v_C);\rho,\rho^{1/2})}
	 \subset \Rp \times \Rp,
	\end{align*}
	where $\text{supp}_{t,y}$ denotes the projection of the support to $\Rp \times \Rp$.
	Then there exists a sequence of H\"{o}lder continuous weak solutions $\{(u_k,v_k)\}_{k=1}^{\infty}$ to the system \eqref{eq:Prandtl} and  a sequence of positive numbers $\{C_k\}$ satisfying the estimates
	\begin{equation}
	[u_k - \underline{u},v_k - \underline{v}]_{\frac{1}{21} - \epsilon,\frac{1}{10} - \epsilon} < C_k, \label{ineq:u-u_S}
	\end{equation}
	and
	\begin{align}
	\text{supp}_{t,y} (u_k - \underline{u},v_k - \underline{v}) \subset N(\text{supp}_{t,y}(\underline{u} - u_C,\underline{v}-v_C);\rho,\rho^{1/2}). \label{supp-diff-u_k-v_k}
	\end{align}
	Furthermore, $(u_k,v_k) \rightharpoonup (\underline{u},\underline{v})$ in the weak-$*$ topology on $L^\infty(\Rp \times \T^2 \times \Rp)$.

\end{Theorem}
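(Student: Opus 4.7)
The plan is to adapt the convex integration scheme developed for incompressible Euler in \cite{dLSz5,IsettVicol,IsettOh16,Buckmaster2013transporting} to the anisotropic Prandtl setting. First I would introduce the Prandtl--Reynolds system
\begin{align*}
\ptl_t u + (u\cdot \nabla_x) u + v \ptl_y u + \nabla_x P - \ptl_{yy}^2 u = \nabla_x \cdot R,\qquad \nabla_x \cdot u + \ptl_y v = 0,
\end{align*}
where $R$ is a symmetric $2\times 2$ matrix-valued ``Reynolds stress'' measuring the failure of $(u,v)$ to solve \eqref{eq:Prandtl}. The theorem will then follow from constructing a sequence of smooth sub-solutions $(u_q, v_q, R_q)$, with $(u_0, v_0)=(\underline{u},\underline{v})$ and $R_0$ the smooth, $(t,y)$-compactly-supported defect produced by the initial perturbation, satisfying
\begin{align*}
\|u_{q+1}-u_q\|_{C^0}+\|v_{q+1}-v_q\|_{C^0}\lesssim \delta_{q+1}^{1/2},\qquad \|R_{q+1}\|_{C^0}\lesssim \delta_{q+2},
\end{align*}
with $\delta_q=\lambda_q^{-2\alpha}$ and $\lambda_{q+1}=\lambda_q^b$ for a suitable $b>1$. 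Compact $(t,y)$-cutoffs propagated through the iteration give \eqref{supp-diff-u_k-v_k}; interpolation between $C^0$ and the high-frequency derivative bounds yields the anisotropic Hölder estimate \eqref{ineq:u-u_S}; and the weak-$*$ convergence $(u_k,v_k)\rightharpoonup(\underline{u},\underline{v})$ is automatic from the oscillatory nature of the corrections.

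The corrector $w_{q+1}=u_{q+1}-u_q$ will be built from Beltrami-type plane waves oscillating only in the tangential variable at frequency $\lambda_{q+1}$,
\begin{align*}
w_{q+1}(t,x,y)=\sum_k a_k(t,x,y)\,W_k(\lambda_{q+1} x),
\end{align*}
where the envelopes $a_k$ come from a Nash-type geometric decomposition of $R_q$ and the $W_k$ are stationary, $x$-divergence-free plane waves chosen so that $\sum_k a_k^2\,W_k\otimes W_k$ cancels $R_q$ up to high-frequency remainders. The vertical corrector is recovered by integrating $\nabla_x\cdot w_{q+1}$ in $y$ from $y=0$, which automatically preserves incompressibility \eqref{eq:div-bu-bv}, the no-slip condition at $y=0$, and, because $w_{q+1}$ has zero mean in $x$, condition \eqref{u-u_S}. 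Before adding the corrector I would mollify $(u_q,v_q,R_q)$ anisotropically: at scale $\ell_q$ in $x$ but at the parabolic scale $\ell_q^{1/2}$ in $(t,y)$, matching the scaling of $\ptl_t-\ptl_{yy}^2$. This anisotropic mollification is exactly what produces the ratio $\beta\approx 2\alpha$ between the $y$- and $x$-Hölder exponents. The new Reynolds stress $R_{q+1}$ then decomposes into the familiar transport error (from the material derivative $\ptl_t+u_q\cdot\nabla_x+v_q\ptl_y$ hitting the slow envelopes), an oscillation error from the quadratic high-mode interactions in $w_{q+1}\otimes w_{q+1}-R_q$, and a viscosity error from $\ptl_{yy}^2 w_{q+1}$; each is inverted against $\nabla_x\cdot$ by a tangential inverse-divergence operator. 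Balancing the three errors against $\delta_{q+2}$ and optimizing in $\alpha,\beta,b$ then yields the exponents $\tfrac{1}{21}-\epsilon$, $\tfrac{1}{10}-\epsilon$.

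The principal obstacle is the interplay between the vertical coupling and the iteration. Because $v$ is slaved to $u$ through $v=-\int_0^y \nabla_x\cdot u\,dy'$, any tangential oscillation inserted into $u$ propagates to $v$ with a gain of $\lambda_{q+1}^{-1}$ in $x$-size but with nontrivial $y$-structure that must be tracked at every step; the nonlinear term $v\,\ptl_y u$ thereby mixes scales in a way absent from the Euler scheme. Simultaneously, the vertical viscosity $\ptl_{yy}^2$ does not damp tangential oscillations but costs $\ell_q^{-1}$ per $y$-derivative of the slow envelopes, and the parabolic mollification scale must be tuned so that this loss is absorbed into $R_{q+1}$. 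Getting these two anisotropic effects to cooperate, while preserving the boundary condition at $y=0$, the compact $(t,y)$-support prescribed by $N(\cdot\,;\rho,\rho^{1/2})$, and the zero tangential mean \eqref{u-u_S}, is the core technical difficulty; once the perturbation lemma is proved with these extra constraints, the geometric iteration delivers the sequence $\{(u_k,v_k)\}$ and the stated Hölder, support and weak-$*$ conclusions all follow by standard arguments.
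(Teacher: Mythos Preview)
Your outline has the right spirit but contains two structural gaps that, as written, would make the iteration break down.

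\textbf{The stress needs a $\partial_y$-component.} You set up the Prandtl--Reynolds system with error $\nabla_x\cdot R$ only, and propose to invert all error terms against a \emph{tangential} inverse-divergence operator. But the nonlinearity contains $\partial_y(vu)$, and the viscosity contributes $\partial_{yy}^2 U$; after adding the corrector these produce terms of the form $\partial_y(\text{something})$ that are \emph{not} of the form $\nabla_x\cdot(\cdot)$ with compactly supported argument (they generically have nonzero $x$-mean at each fixed $(t,y)$). The paper therefore works with a stress pair $R=(S,Y)$ and an error of the form $\nabla_x\cdot S+\partial_y Y$; the correction is designed so that its low-frequency self-interaction $VU$ cancels the $Y$-component, and the residual high-frequency terms are absorbed by solving a \emph{partially symmetric} divergence system $\sum_j\partial_{x^j}T^{jl}+\partial_y T^{3l}=H^l$ with compactly supported data (Lemma~\ref{Lemma:SolveSymmDiv}). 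Your purely tangential inverse-divergence cannot handle this.

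\textbf{Recovering $v$ by integrating from $y=0$ destroys the support control.} If $w_{q+1}$ is supported in $y\in[a,b]\subset(0,\infty)$, then $v_{q+1}-v_q=-\int_0^y\nabla_x\cdot w_{q+1}\,dy'$ vanishes for $y<a$ but for $y>b$ equals $-\int_a^b\nabla_x\cdot w_{q+1}\,dy'$, which has no reason to vanish (since $\nabla_x\cdot w_{q+1}=\sum_k\nabla_x a_k\cdot W_k$ involves the slow envelopes). You would lose \eqref{supp-diff-u_k-v_k} and the boundary condition at $y=+\infty$. The paper avoids this entirely: the correction $w_I=(U_I,V_I)$ is built directly as a compactly supported \emph{three}-dimensional divergence-free field via $w_I=-\Delta(\lambda^{-2}|\nabla\xi_I|^{-2}e^{i\lambda\xi_I}W_I)+\nabla\nabla\cdot(\lambda^{-2}|\nabla\xi_I|^{-2}e^{i\lambda\xi_I}W_I)$, so $V$ is produced locally and never by integration from the boundary.

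A third point: the pressure in Prandtl is \emph{fixed} by the outer Euler flow, not a Lagrange multiplier. A standard Nash decomposition $R_q+\rho_q I=\sum a_k^2\,W_k\otimes W_k$ relies on absorbing $\rho_q I$ into the pressure; that is unavailable here. The paper instead runs a \emph{serial} scheme (hence the three energy levels $\mathcal{E}_1,\mathcal{E}_2,\mathcal{E}_3$): at each step only the $\f_1\otimes\f_1$-component of $S$ and the $\f_1$-component of $Y$ are cancelled, cycling through the basis $\{\f_i\}_{i=1}^3$ of \eqref{def:f_i}. This is also why the final exponents are $\tfrac{1}{21}$ and $\tfrac{1}{10}$ rather than something better: the serial nature, together with the parabolic constraint $\ell_y^2\sim\tau$ needed to control $\partial_{yy}^2 U$, forces the specific balance in \eqref{eq:iter-N}. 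Your parabolic mollification intuition ($\beta\approx 2\alpha$) is correct in spirit, but the actual accounting goes through the frequency-energy-level formalism of Definition~\ref{Def:FE-Levels} rather than the $\delta_q=\lambda_q^{-2\alpha}$ bookkeeping you propose.
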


Given a uniform outflow $U=\text{const}$, for any initial data $u_S^0(y)$ depending only on the vertical variable $y$, it is well-known that the system \eqref{eq:Prandtl} admits a shear flow solution $(u_S(t,y),0)$,
which is the unique solution to the following heat equation:
\begin{align*}
\begin{cases}
\ptl_t u_S = \ptl_{yy}^2 u_S, \quad \text{ in } \Rp \times \Rp,\\
u_S|_{y=0}=0, \quad \lim\limits_{y\to +\infty} u_S(t,y)=U,\\
u_S|_{t=0}=u_S^0(y).
\end{cases}
\end{align*}
Applying  Theorem \ref{Thm:Prandtl} to the shear flow $(u_S(t,y),0)$, we obtain the following results.
\begin{Corollary} \label{Corollary:ShearFlow}
	There exists a H\"{o}lder continuous weak solution $(u,v)$ satisfying the same initial-boundary conditions as the shear flow $u_S$. Furthermore, the tangential velocity $u$ is not monotonic in  $y$ and the flow $(u,v)$ is transverse to the outflow $(U,0)$ at some point $(t_0,x_0,y_0) \in \Rp \times \T^2 \times \Rp$.
\end{Corollary}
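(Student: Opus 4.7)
The plan is to apply Theorem~\ref{Thm:Prandtl} to the shear flow $(u_C,v_C)=(u_S(t,y),0)$ with an engineered smooth perturbation $(\underline{u},\underline{v})$ for which $\underline{u}$ is manifestly non-monotone in $y$ on some vertical line and transverse to $U$ at some interior point. Once the theorem produces the sequence $(u_k,v_k)\rightharpoonup(\underline{u},\underline{v})$, both features transfer to some $u_k$ via a mean-value-type argument combining the weak-$*$ convergence with the continuity of each $u_k$.

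For the construction, after rotating so that $U=(1,0)$ and choosing a monotone initial profile $u_S^0=(u_S^{0,1}(y),0)$ (so $u_S^1(t,y)$ remains monotone in $y$ with range in $[0,1]$ by the maximum principle), set
\begin{align*}
\underline{u}\;=\;u_S + A\,\eta(t)\,\chi(y)\,\nabla_x^\perp\psi(x),\qquad \underline{v}\;=\;0,
\end{align*}
where $\eta,\chi\in C_c^\infty((0,\infty))$ are bump functions, $\psi(x)=\sin x_1\sin x_2$, $\nabla_x^\perp\psi=(-\ptl_{x_2}\psi,\ptl_{x_1}\psi)$, and $A>0$ is a large constant. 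Since $\nabla_x\cdot\nabla_x^\perp\psi\equiv 0$ and $\int_{\T^2}\nabla_x^\perp\psi\,dx=0$, conditions~\eqref{eq:div-bu-bv}--\eqref{u-u_S} hold and $\underline{u}-u_S\in C_c^\infty(\Rp\times\T^2\times\Rp)$. Choose $\rho>0$ small enough that the closure of $N(\text{supp}_{t,y}(\underline{u}-u_S,0);\rho,\rho^{1/2})$ sits strictly inside $\Rp\times\Rp$; Theorem~\ref{Thm:Prandtl} then furnishes the sequence $(u_k,v_k)$. The support condition~\eqref{supp-diff-u_k-v_k} forces $(u_k,v_k)=(u_S,0)$ for every $(t,y)$ outside this enlargement and every $x$, so each $u_k$ inherits the initial datum $u_S^0$ at $t=0$, the no-slip condition at $y=0$, and the outflow limit $U$ as $y\to+\infty$. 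Choosing $(t_0,x_0,y_0)$ with $\eta(t_0)\chi(y_0)\ptl_{x_1}\psi(x_0)\ptl_{x_2}\psi(x_0)\neq 0$, one has $\underline{u}^2(t_0,x_0,y_0)\neq 0$ (so $\underline{u}$ is transverse to $U$ there), and for $A$ sufficiently large $\underline{u}^1(t_0,x_0,y_0)$ falls arbitrarily far outside $[0,1]$, so that $\underline{u}^1(t_0,x_0,\cdot)$ is strictly non-monotone in $y$.

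To promote these features to some $u_k$, fix a nonnegative mollifier $\phi_\epsilon$ of unit mass supported in an $\epsilon$-ball around $(t_0,x_0,y_0)$. Weak-$*$ $L^\infty$ convergence together with the smoothness of $\underline{u}$ yields $\int\phi_\epsilon u_k\to\underline{u}(t_0,x_0,y_0)+O(\epsilon)$ as $k\to\infty$, and continuity of $u_k$ combined with the mean value theorem for integrals produces, componentwise, points in the support of $\phi_\epsilon$ at which $u_k$ equals the corresponding averaged value. From this, the $u_k^2$-average yields a point $(\tilde t_*,\tilde x_*,\tilde y_*)$ with $u_k^2(\tilde t_*,\tilde x_*,\tilde y_*)\neq 0$, delivering transversality of $(u_k,v_k)$ to $(U,0)$; similarly the $u_k^1$-average yields $(\tilde t,\tilde x,\tilde y)$ where $u_k^1(\tilde t,\tilde x,\tilde y)$ lies far outside $[0,1]$, while by~\eqref{supp-diff-u_k-v_k} the values $u_k^1(\tilde t,\tilde x,y_i')$ at two points $y_i'$ outside the enlarged support equal $u_S^1(\tilde t,y_i')\in[0,1]$, forcing $u_k^1(\tilde t,\tilde x,\cdot)$ to be non-monotone on this vertical line. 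The principal obstacle is that Theorem~\ref{Thm:Prandtl} supplies only weak-$*$ $L^\infty$ convergence with no uniform H\"older bound---the constant $C_k$ in~\eqref{ineq:u-u_S} may diverge with $k$---so one cannot invoke Arzel\`a-Ascoli to extract a pointwise limit; the mean-value argument above circumvents this by using only the continuity of each individual $u_k$.
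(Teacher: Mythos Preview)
Your argument is correct and follows a somewhat different path from the paper's. The paper perturbs with $(\underline{u},\underline{v})=(u_S,0)+(\partial_y\partial_{x_2}\phi_\varepsilon,\,0,\,-\partial_{x_1}\partial_{x_2}\phi_\varepsilon)$ for a small bump $\phi_\varepsilon$, then reads off non-monotonicity from the distributional limit $\partial_y u_k\rightharpoonup\partial_y u_S+(\partial_{yy}^2\partial_{x_2}\phi_\varepsilon,0)$ (the added term dominates and changes sign for $\varepsilon$ small) and transversality from $v_k\rightharpoonup-\partial_{x_1}\partial_{x_2}\phi_\varepsilon\not\equiv 0$. You instead keep $\underline{v}=0$, use a large-amplitude tangential perturbation to push $\underline{u}^1$ outside the (bounded) range of $u_S^1$, and detect both features by combining weak-$*$ convergence with the mean-value theorem for integrals---a cleaner bridge from averaged to pointwise information than the paper's distributional sketch. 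Two minor points: $\sin x_1\sin x_2$ is not $\mathbb Z^2$-periodic on the unit torus (use $\sin(2\pi x_j)$), and the ``rotation'' to $U=(1,0)$ is not a symmetry of $\mathbb T^2$; the easy fix is to run your mean-value argument on the scalar $u_k\cdot U^\perp$ rather than on $u_k^2$, which yields transversality for general $U\neq 0$ without any rotation. Your monotonicity assumption on $u_S^0$ is also unnecessary---only boundedness of $u_S$ (immediate from the maximum principle for the heat equation) enters the argument.
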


\begin{Remark}
	The construction of the  H\"{o}lder continuous weak solutions to the three dimensional Prandtl system exploits essentially the degree of freedom of the multi-dimensional tangential velocity space. It seems that similar constructions would not work directly for the two-dimensional Prandtl system.
\end{Remark}

The constructions in the proof of Theorem \ref{Thm:Prandtl} can also be adapted to some other models with vertical viscosities. In particular, consider the system
\begin{align}
\begin{cases}\label{eq:Euler-partial}
\ptl_t u + (u \cdot \nabla ) u  + \nabla P = \ptl^2_{yy} u,\\
\nabla \cdot u = 0,\\
u|_{t=0}=u_0,
\end{cases}
\end{align}
where $(x_1,x_2,y) \in \mathbb{T}^3, \nabla = (\ptl_{x_1},\ptl_{x_2},\ptl_{y}), u=(u^1,u^2,u^3)(t,x,y)$ and $P=P(t,x,y)$ denote the space variable, the spatial gradient, the velocity and the pressure of the flow, respectively.
\begin{Theorem}\label{Thm:Euler-partial}
	There exists a non-trivial H\"{o}lder continuous weak solutions $u$ to the system \eqref{eq:Euler-partial} which is supported in a compact time interval, with
	\begin{align}
	[u]_{\frac{1}{21}-\eps,\frac{1}{10}-\eps} < +\infty. \label{ineq:u}
	\end{align}
\end{Theorem}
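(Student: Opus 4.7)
The plan is to prove Theorem \ref{Thm:Euler-partial} by adapting the convex integration scheme used for Theorem \ref{Thm:Prandtl} to the periodic setting $\T^3$, where the absence of boundary simplifies several estimates while the crucial anisotropy (viscosity only in $y$) is retained. I would work with the Euler--Reynolds relaxation
\begin{align*}
\ptl_t u + \di(u \otimes u) + \nabla P - \ptl^2_{yy} u = \di R, \qquad \nabla \cdot u = 0,
\end{align*}
on $\R \times \T^3$, where $R$ is a symmetric stress tensor, and construct a sequence of subsolutions $(u_q, P_q, R_q)$ with $R_q \to 0$ uniformly and $u_q$ convergent in the anisotropic H\"older space $[\,\cdot\,]_{1/21-\eps,\,1/10-\eps}$. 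The limit $u = \lim u_q$ will then be a weak solution to \eqref{eq:Euler-partial}.

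The inductive step is the heart of the scheme. Given $(u_q, R_q)$ at frequency scale $\lambda_q$, I would add an anisotropically oscillatory perturbation
\begin{align*}
w_{q+1}(t,x,y) = \sum_{k} a_{k}(t,x,y)\, \phi_{k}\bigl(\lambda_{q+1}\Phi_q(t,x,y),\ \mu_{q+1} y\bigr)\, \xi_k,
\end{align*}
where $\Phi_q$ is a tangential Lagrangian correction adapted to the transport by $u_q$, the $\phi_k$ are divergence-free Mikado/Beltrami-type building blocks (as in \cite{IsettVicol,Buckmaster2013transporting}), and the amplitudes $a_k$ are chosen so that the low-frequency part of $w_{q+1} \otimes w_{q+1}$ cancels $R_q$ up to higher-order errors. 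The two oscillation scales $\lambda_{q+1}$ (tangential/time) and $\mu_{q+1}$ (vertical) are linked by a power law $\mu_{q+1} \sim \lambda_{q+1}^{\theta}$ with $\theta<1$ selected to balance transport, oscillation, and viscous errors. Updated pressure $P_{q+1}$ and stress $R_{q+1}$ are recovered via anisotropic stationary phase and the usual inverse-divergence operators.

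The main obstacle is precisely the one already faced in Theorem \ref{Thm:Prandtl}: the viscous error $\ptl^2_{yy} w_{q+1} \sim \mu_{q+1}^2 w_{q+1}$ must remain smaller than the inverse-divergence gain $\lambda_{q+1}^{-1}|\nabla R_q|$, which forces $\mu_{q+1} \ll \lambda_{q+1}$ and hence the separate H\"older scales. Optimizing the standard inequalities between transport error ($\sim \delta_{q+1}^{1/2}\lambda_{q+1}^{-1}$), oscillation error ($\sim \delta_{q+1}\mu_{q+1}/\lambda_{q+1}$), and viscous error ($\sim \delta_{q+1}^{1/2}\mu_{q+1}^2 \lambda_{q+1}^{-1}$) with the $C^0$ target $\|R_{q+1}\|_0 \leq \delta_{q+2}$ reproduces the same exponent pair $(1/21-\eps,\,1/10-\eps)$ as in \eqref{ineq:u-u_S}. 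Since $\T^3$ is closed, there is no outflow to match, no boundary cutoff to handle, and the zero-mean constraint analogous to \eqref{u-u_S} is automatic for mean-free building blocks, so the construction of \cite{IsettVicol,Buckmaster2013transporting} combined with the anisotropic modifications developed for Theorem \ref{Thm:Prandtl} applies with cosmetic changes.

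To guarantee a nontrivial solution with compact time support, I would start from $(u_0,P_0,R_0) = (0,0,R_0)$ with $R_0 \in C_c^\infty$ supported in a compact time interval $[t_1,t_2]$ and not identically zero, so that the first perturbation $w_1$ is nonzero. The support condition is then propagated through the iteration by the analogue of \eqref{supp-diff-u_k-v_k}, forcing each $u_q$ (and hence the limit $u$) to be time-compactly supported. Convergence in the anisotropic H\"older seminorm follows by summing the geometric bounds $\|w_{q+1}\|_{C^0} \lesssim \delta_{q+1}^{1/2}$ against the Bernstein-type estimates for the anisotropic frequency truncations, giving \eqref{ineq:u} and completing the proof.
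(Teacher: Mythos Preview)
Your proposal diverges from the paper's scheme in a way that introduces a genuine gap. You posit building blocks $\phi_k(\lambda_{q+1}\Phi_q,\mu_{q+1}y)$ that oscillate at separate scales in the horizontal and vertical directions, and you invoke ``Mikado/Beltrami-type'' profiles. But neither Beltrami flows nor Mikado flows survive an anisotropic rescaling of this kind while retaining the algebraic properties you need: the Beltrami identity $\mathrm{curl}\,B=\lambda B$ is destroyed, and for Mikado flows the divergence-free condition $\xi_k\cdot\nabla\phi_k=0$ forces $\phi_k$ to be constant along $\xi_k$, which is incompatible with an independent vertical frequency whenever $\xi_k$ has a nonzero $y$-component. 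Since cancelling the full $3\times3$ stress (in particular the $(3,3)$ and $(j,3)$ entries) requires directions $\xi_k$ that are not all horizontal, your ansatz does not close. The paper's introduction flags exactly this point: the Beltrami and Mikado schemes are not directly applicable here because of the vertical viscosity.

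What the paper actually does for \eqref{eq:Euler-partial} (Section \ref{sec:Euler-partial}) is to keep the oscillation \emph{purely horizontal}: the phase functions remain $\xi_I=\xi_I(t,x)$, so $\ptl_{yy}^2(e^{i\lambda\xi_I}\tilde W_I)=e^{i\lambda\xi_I}\ptl_{yy}^2\tilde W_I$ costs only $\ell_y^{-2}$, not $\lambda^2$. The vertical scale $\ell_y$ enters only through the partition-of-unity localization, exactly as in the Prandtl construction. To cancel the extra $(3,3)$ component $r$ of the stress (which the Prandtl system does not have), the paper adds a second family of plane waves $G_I=(0,0,g_I)$ oscillating at the horizontally shifted phase $100\lambda\xi_I$, with $g_I$ chosen so that $2g_\kappa^2+2b_\kappa^2=(e+r)(q_\kappa)$; the factor $100$ guarantees phase separation from the $W_I$ family so their interactions land in $R_H$. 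The iteration is the same serial one-direction-at-a-time scheme governed by Lemma~\ref{Lemma:Iteration'}, started from $u\equiv0$ with nontriviality driven by the energy profile $e(t)$; this is what produces the exponents $(\tfrac{1}{21}-\eps,\tfrac{1}{10}-\eps)$, not an optimization over a free parameter $\mu_{q+1}$.
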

\begin{Remark}
	Very recently, Buckmaster and Vicol used the technique of convex integration to prove non-uniqueness of weak solutions to the Navier-Stokes equation on $\mathbb{T}^3$ in \cite{BV17}. However, their solutions are not continuous, in contrast to the continuous weak solutions obtained here for \eqref{eq:Prandtl} and \eqref{eq:Euler-partial}. In fact, by Serrin's regularity criterion, any bounded weak solutions to 3D Navier-Stokes equation  must be regular, yielding the uniqueness.
\end{Remark}

We now make some comments on the analysis in this paper.

The main idea of the constructions here is to employ the convex integration technique introduced in  \cite{dLSz5} and subsequently developed in  \cite{IsettVicol,IsettOh16,Buckmaster2013transporting,Isett12,Isett16,Buckmaster2014} for the incompressible Euler system.
In the breakthrough work \cite{dLSz5},  De Lellis and Sz{\'e}kelyhidi employed high frequency Beltrami waves  as the principle building blocks  to construct continuous  Euler flows with non-conserved energy. Since then, the convex integration technique has been refined and applied to other systems of fluid \cite{IsettVicol,Isett12,Buckmaster2013transporting,Buckmaster2014,DaneriSzekelyhidi16,TZ17}. For a  thorough discussion, see \cite{dLSz16}. Isett \cite{Isett16} proved the Onsager's conjecture for the $3$-D Euler equations, constructing $C^{1/3-\eps}$ H\"{o}lder continuous Euler flows with non-conserved energy. A shorter proof was given by \cite{BDSV17}. Mikado waves, first introduced in \cite{DaneriSzekelyhidi16}, were employed as the main building blocks in \cite{Isett16,DaneriSzekelyhidi16}  replacing Beltrami waves in the previous constructions, along with a novel gluing approximation technique.

However,  the schemes for the Euler equations \cite{IsettVicol,Isett12,IsettOh16,Buckmaster2013transporting,dLSz5,Buckmaster2014,Isett16,DaneriSzekelyhidi16} may breakdown in the presence of viscosities. 
The main difficulties in developing a convex integration iteration scheme for the Prandtl system \eqref{eq:Prandtl} are vertical viscosities and that the pressure being fixed by boundary data instead of a Lagrange multiplier for the incompressible Euler or Navier-Stokes equations.
It seems that the schemes using Beltrami waves  in \cite{Buckmaster2013transporting,Buckmaster2014,dLSz4,dLSz5,Isett12} and the scheme using  Mikado waves in \cite{Isett16} are not directly applicable  to the system \eqref{eq:Prandtl}.

To deal with the transport-vertical-diffusion effect, our main observation is that a convex integration scheme could work using only  horizontal oscillations for the $3D$ Prandtl \eqref{eq:Prandtl}.
 We employ a serial convex integration scheme inspired by \cite{IsettVicol}, using localized linear plane waves as the main building blocks. The tangential and vertical length scales  of the flow are chosen to be compatible with the degenerate parabolic structure of the system \eqref{eq:Prandtl}. The serial nature of the scheme and the coupling of the convection  and the vertical diffusion restrict the regularity  obtained in our results.

In the very recent breakthrough \cite{BV17}, Buckmaster and Vicol obtained non-uniqueness of weak solutions to the three-dimensional Navier-Stokes equations. They developed a new convex integration scheme in Sobolev spaces using intermittent Beltrami flows which combined concentrations and oscillations. Later, the idea of using intermittent flows was used to study non-uniquenss for transport equations in \cite{MS17}, which used scaled Mikado waves. In view of these development, it seems natural to investigate the Prandtl system \eqref{eq:Prandtl} using  the technique of intermittent flows. However, it seems to us that the building blocks in \cite{BV17, MS17} do not directly work for the Prandtl system \eqref{eq:Prandtl} due to the difference in the pressure and the structure of the equations.

The rest of this paper is organized as follows.  In Section \ref{sec:iterationLemma}, the iteration lemma for constructing weak solutions to the Prandtl system  is stated. In Section \ref{sec:corrections}, we give the main constructions for the iteration lemma. In Section \ref{sec:estimates}, we prove the main estimates. In  Section \ref{sec:iterThm}, the main results are proved using the iteration lemma.

\subsection*{Notations}
The following notations are used in the rest of the paper.
Set
\begin{align*}
\R_+=(0,\infty), \quad \bRp=[0,\infty).
\end{align*}

 Let $\T^d$ denote $d$-dimensional torus with the volume normalized to unity:
\begin{align*}
|\mathbb{T}^d| = 1.
\end{align*}
Fix a set  of unit vectors in $\mathbb{R}^2$:
\begin{align}
\{\f_i\}_{i=1}^3 =\{(1,0),(0,1),\frac{1}{\sqrt{2}}(1,1)\} \subset \mathbb{R}^2. \label{def:f_i}
\end{align}
Then $\{ (1,0), (0,1) \}$ form a basis for $\mathbb{R}^2$ and   $\{\f_i\otimes \f_i\}_{i=1}^3$  form a basis for the space of symmetric $2\times2$ matrices, respectively.

For a set $\Omega \subset \Rp \times \T^2 \times \Rp$, let $P_x \Omega$ and $P_{t,y} \Omega$ denote its projection into $\T^2$ and $\Rp \times \Rp$ respectively.  Denote $\text{supp}_{t,y} g = P_{t,y} \supp g$ for a given function $g$. The tangential and the spatial gradient are denoted by
\begin{align*}
\nabla_x = (\frac{\ptl}{\ptl x_1}, \frac{\ptl}{\ptl x_2}), \quad \nabla = (\nabla_x, \ptl_y) = (\frac{\ptl}{\ptl x_1}, \frac{\ptl}{\ptl x_2},\frac{\ptl}{\ptl y}).
\end{align*}

\section{Brief Outline and The Main Iteration Lemma}\label{sec:iterationLemma}

\subsection{Brief Outline of the Scheme}

Adapting the convex integration method developed in  \cite{dLSz4,dLSz5,IsettVicol}, we will obtain a weak solution $(u,v)$ to  \eqref{eq:Prandtl} as the limit of  solutions $\{(u_{(n)},v_{(n)},S_{(n)},Y_{(n)})\}$ to the following approximate system,
\begin{align}\label{eq:PrStress}
\begin{cases}
\ptl_t u_{(n)} + \nabla_x \cdot (u_{(n)} \otimes u_{(n)} ) + \ptl_y(v_{(n)} u_{(n)}) - \ptl_{yy}^2 u_{(n)} + \nabla_x P&= \nabla_x \cdot S_{(n)}\\&+ \ptl_y Y_{(n)},\\
\nabla_x \cdot u_{(n)} + \ptl_y v_{(n)} = 0,
\end{cases}
\end{align}
where $S_{(n)}$ is a symmetric $2 \times 2$ matrix and $Y_{(n)}$ is a vector in $\mathbb
{R}^2$. The errors of the approximations are measured by the stress term $R_{(n)}=(S_{(n)},Y_{(n)})$.

In each step of the iteration, writing the stress in components as
\begin{align*}
S_{(n)}=-\sum_{i=1}^3 S_{(n),i}\f_{i}\otimes \f_{i}, \quad Y = -\sum_{i=1}^3 Y_{(n),i} \f_{i},
\end{align*}
we introduce high frequency waves in the forms
\begin{align*}
w_{(n+1)} = (u_{(n+1)}-u_{(n)},v_{(n+1)}-v_{(n)}) = \sum_I e^{i \lambda_{(n+1)} \xi_{(n+1),I}} \tilde{W}_{(n+1),I}
\end{align*}
to eliminate the largest components (in $L^{\infty}$ norms) of the stress $(S_{(n),i},Y_{(n),i})$ (which is taken to be $(S_{(n),1},Y_{(n),1})$ by renumbering the $i$-index). The new stress takes the form
\begin{align*}
(S_{(n+1)},Y_{(n+1)}) = -\Big(\sum_{i=2}^3S_{(n),i}\f_{i}\otimes \f_{i},\sum_{i=2}^3 Y_{(n),i}\f_{i}\Big) + \delta R_{(n+1)},
\end{align*}
where $\delta R_{(n+1)}$ is a small correction of the order $O(1/\lambda_{(n+1)})$, obtained by solving the divergence equations with oscillatory sources of frequency $O(\lambda_{(n+1)})$, .
Repeating this procedure and choosing the frequency parameters $\lambda_{n} \to \infty$, we can ensure that   the errors converge to zero uniformly, i.e., $\|R_{(n)}\|_{C^0} \to 0$. The precise outcome of a single iteration is  stated in the main iteration lemma below.

\subsection{The Main Iteration Lemma}

The frequency-energy levels for the approximate solution $(u,v,S,Y)$, adapted from \cite{Isett12,IsettVicol},  will be used in the iteration.

In the following, set
\begin{align}
\| \cdot \| = \| \cdot \|_{L^{\infty}}.
\end{align}

\begin{Definition}\label{Def:FE-Levels}
	Let $\Xi,\mcE_u,\mcE_1,\mcE_2,\mcE_3$ be positive numbers satisfying
	\begin{equation}
	\Xi \geq 1,\quad  4\mcE_3 \leq 2\mcE_2 \leq \mcE_1 \leq \mcE_u \label{E_i}.
	\end{equation}

	A smooth solution $(u,v,S,Y)$ to the system \eqref{eq:PrStress} 	is said to have frequency-energy levels below $(\Xi,\mcE)=(\Xi,\mcE_u,\mcE_1,\mcE_2,\mcE_3)$, if the following estimates are satisfied:
	\begin{equation}
	\begin{aligned}
	\|\nabla_{x} (u,v)\| \leq \Xi \mcE_u^{1/2},\quad \|(\ptl_t + u \cdot \nabla_{x} + v \ptl_y) (u,v)\| \leq \Xi \mcE_u, \\
	\| \ptl_y (u,v)\| \leq \Xi^{1/2} \mcE_u^{3/4},\quad \| \ptl_{yy}^2 (u,v)\| \leq \Xi \mcE_u, \label{est-u}
	\end{aligned}
	\end{equation}
	 and
	for $i=1,2,3$, any multi-indices $0 \leq |\alpha| + \beta + \gamma \leq 1$,
	\begin{equation}
	\begin{aligned}
	\|(\ptl_t + u \cdot \nabla_{x} + v \ptl_y)^{\gamma} \nabla_{x}^{\alpha}\ptl_y^{\beta} (S_i,Y_i)\| \leq \Xi^{|\alpha|} (\Xi^{1/2}\mcE_u^{1/4})^{\beta+2\gamma} \mcE_i, \label{est-r_i}
	\end{aligned}
	\end{equation}
	where   the stress $R=(S,Y)$ is written in components as
	\begin{align}
	S=-\sum_{i=1}^3 S_{i}\f_{i}\otimes \f_{i}, \quad Y = -\sum_{i=1}^3 Y_{i} \f_{i}. \label{eq:mbR}
	\end{align}
	Here the vector $\{\f_i\}_{i=1}^3$ are defined in \eqref{def:f_i} (possibly renumbering) and $Y_j = 0$ corresponding to  $\f_j = \frac{1}{\sqrt{2}}(1,1)$.
\end{Definition}

Now the main iteration lemma can be stated as
\begin{Lemma}\label{Lemma:Iteration}
	Given a positive constants $\vartheta$, there exists a constant $C_\vartheta$ depending on $\vartheta$  such that the following holds:\\
	Suppose that $(u,v,R)=(u,v,S,Y)$ 	is a smooth solution  to the system \eqref{eq:PrStress}  with frequency-energy levels below $(\Xi,\mcE)=(\Xi,\mcE_u,\mcE_1,\mcE_2,\mcE_3)$ and
	\begin{equation}
	\mcE_u \leq \Xi^2, \quad \ell := \Xi^{-1/2}\mcE_u^{-1/4} \leq (1+\|v\|_{L^{\infty}} )^{-1}. \label{eq:ell}
	\end{equation}
	Let $\e(t,y)$  be a given non-negative function satisfying
	\begin{equation}
	\begin{aligned}
	\e(t,y) &\geq 4 \mcE_1 \text{ on } N(\text{supp}_{t,y} R;\ell^2,\ell),
	\\
	N(\supp \e;50\ell^2,50\ell) &\subset \Rp \times \Rp,
	 \label{supp-e}
	\end{aligned}
	\end{equation}
	and for $0 \leq \alpha + \beta \leq 1$,
	\begin{align}
	\| (\ptl_t + u \cdot \nabla_{x} + v \ptl_y)^{\alpha} \ptl_y^{\beta} (\e^{1/2})\|_{L^{\infty}} \leq C_{\alpha,\beta}  \ell^{-(2\alpha + \beta)}\mcE_1^{1/2}.
	\label{est-e}
	\end{align}
	Then for any positive number $N$   such that
	\begin{equation}
	N \geq  \max\left\{\Xi^{\vartheta},  \left(\frac{\mcE_u }{\mcE_3}\right)^{3/2} \left(\frac{\mcE_1 }{\mcE_3}\right)^{3/2}\right\}, \label{ineq:N}
	\end{equation}
	there exists a smooth solution $(\tilde{u},\tilde{v},\tilde{R})$ to the system \eqref{eq:PrStress} with frequency-energy levels below  $(\tilde{\Xi},\tilde{\mcE})=(\tilde{\Xi},\tilde{\mcE_u},\tilde{\mcE_1},\tilde{\mcE_2},\tilde{\mcE_3})$, with
	\begin{equation}
	(\tilde{\Xi},\tilde{\mcE_u},\tilde{\mcE_1},\tilde{\mcE_2},\tilde{\mcE_3})=(C_{\vartheta} N \Xi,\mcE_1,2\mcE_2,2\mcE_3,N^{-1/3}\mcE_u^{1/2}\mcE_1^{1/2}). \label{tilde-FE-levels}
	\end{equation}
	Furthermore, the correction $w = (\tilde{u},\tilde{v}) - (u,v)$ satisfies the estimates
	\begin{equation}
	\begin{aligned}
	&\|\nabla_x^{\alpha} \ptl_y^{\beta} w\|_{L^{\infty}} \leq C_{\vartheta} (N \Xi)^{\alpha}(N^{1/2} \Xi^{1/2} \mcE_1^{1/4})^{\beta} \mcE_1^{1/2}, \quad 0 \leq |\alpha|+ \frac{\beta}{2} \leq 1, \\
	&\|(\ptl_t + \tilde{u}  \cdot \nabla_x + \tilde{v} \ptl_y ) w\|_{L^{\infty}} \leq C_{\vartheta} N \Xi \mcE_1, \label{est-w}
	\end{aligned}
	\end{equation}
	and the support of the constructions satisfies
	\begin{align}
	\text{supp}_{t,y} (w,\tilde{R}) \subset  N(\supp \e;\ell^2,\ell). \label{supp-w-tildeR}
	\end{align}
\end{Lemma}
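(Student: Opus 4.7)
The plan is to perform one step of a serial convex integration that cancels the largest stress component $(S_1,Y_1)$ while adding a controlled contribution from $\e$ and producing a new stress of much smaller amplitude. The assumed relations $4\mcE_3 \le 2\mcE_2 \le \mcE_1$ in \eqref{E_i} are tailored precisely so that the untouched components $(S_j,Y_j)$, $j=2,3$, can be absorbed into $\tilde\mcE_1 = \mcE_1$ and $\tilde\mcE_2 = 2\mcE_2$, consistent with \eqref{tilde-FE-levels}; over successive iterations the $i$-index is renumbered so that every component is eventually eliminated.

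Following the plane-wave scheme of \cite{IsettVicol}, I would first mollify $(u,v,S,Y,\e)$ on space-time scales adapted to the degenerate parabolic structure $\ptl_t \sim \ptl_{yy}^2$ (roughly $\ell^2$ in $t$ and $\ell$ in $y$, with $\ell$ as in \eqref{eq:ell}), producing smooth profiles whose higher derivatives are controlled. Next, construct a partition of unity $\{\chi_I(t,y)\}$ on boxes of size $\ell^2 \times \ell$ and phase functions $\xi_I(t,x,y)$ that are exactly transported by the mollified flow, with patch-local initial condition $\xi_I|_{t=t_I} = \f_1 \cdot x$ so that $\nabla_x \xi_I$ remains close to $\f_1$ on $\supp \chi_I$. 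The perturbation then takes the form
\begin{align*}
w = \sum_I \chi_I\, e^{i \lambda \xi_I}\, \tilde W_I + \text{c.c.}, \qquad \lambda = N\Xi,
\end{align*}
where the complex amplitudes $\tilde W_I$ are prescribed so that the resonant, low-frequency part of $w \otimes w$ equals $(\e + S_1)\f_1 \otimes \f_1$, thereby cancelling $S_1 \f_1 \otimes \f_1$ at the cost of adding the prescribed $\e$. A small correction to the vertical component enforces $\nabla_x \cdot w_u + \ptl_y w_v = 0$, while the zero-mean condition $\int_{\T^2} w_u\, dx = 0$ is preserved by the horizontal phase choice.

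Substituting $(\tilde u,\tilde v) = (u,v) + w$ into \eqref{eq:PrStress}, the new stress $\tilde R$ must absorb five types of errors: (i) the non-resonant oscillatory part of $w \otimes w$ and $v_{\text{pert}} w_u$; (ii) transport errors from derivatives of the slow amplitudes and cutoffs; (iii) the vertical-viscosity error $\ptl_{yy}^2 w$; (iv) mollification errors; and (v) linear interactions of type $u \otimes w + w \otimes u$ and $v w_u + w_v u$. Each error has the factored form $e^{i\lambda\xi_I} F_I$ with horizontal phase gradient $|\nabla_x \xi_I| \sim 1$, so I would invert $\nabla_x \cdot S + \ptl_y Y = F$ by integrating by parts in $x$, each step gaining a factor $\lambda^{-1}$; balancing the dominant oscillatory contribution against the lower bound \eqref{ineq:N} on $N$ yields the required $\|(\tilde S_3,\tilde Y_3)\| \lesssim N^{-1/3}\mcE_u^{1/2}\mcE_1^{1/2}$.

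The main obstacle is the coupling between vertical diffusion and horizontal transport. Because the waves oscillate only in $x$, both $y$-derivatives in $\ptl_{yy}^2 w$ must fall on slow amplitudes varying on scale $\ell$, so the viscous error itself is manageable; however, the vertical length scale of the waves must stay compatible with the degenerate parabolic scaling, which is why \eqref{est-w} carries the factor $N^{1/2}\Xi^{1/2}\mcE_1^{1/4}$ in the $\ptl_y$-bound rather than $\lambda$. A second subtlety is the pressure, which in \eqref{eq:Prandtl} is determined by the outer flow rather than being a Lagrange multiplier: restricting to tangential horizontal oscillations with zero $x$-mean ensures the construction does not generate any new pressure contribution. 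Finally, the support constraint \eqref{supp-w-tildeR} forces $\tilde W_I$ to vanish outside $N(\supp \e;\ell^2,\ell)$, which is compatible with $\e \ge 4\mcE_1$ on $N(\supp_{t,y} R;\ell^2,\ell)$ from \eqref{supp-e}. It is the simultaneous balancing of these oscillation, transport, viscous, and support constraints that ultimately dictates the H\"older exponents $1/21$ and $1/10$ appearing in Theorem \ref{Thm:Prandtl}.
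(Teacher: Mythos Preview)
Your overall architecture---mollify, localize, add high-frequency tangential plane waves, then solve divergence equations for the residual stress---is the paper's, but the phase choice contains a genuine error. You set $\xi_I|_{t=t_I} = \f_1 \cdot x$ so that $\nabla_x\xi_I$ stays close to $\f_1$; at the same time, for the resonant part of $w\otimes w$ to produce $(\e+S_1)\f_1\otimes\f_1$, the tangential amplitude must point along $\f_1$. Then the leading-order tangential divergence of $e^{i\lambda\xi_I}\tilde W_I$ is $i\lambda(\nabla_x\xi_I\cdot\f_1)\,a_I\,e^{i\lambda\xi_I}$, of size $\lambda\mcE_1^{1/2}$, and no ``small correction to the vertical component'' can cancel it: that would force $\|\ptl_y w_v\|\sim\lambda\mcE_1^{1/2}$, hence $\|w_v\|\gtrsim\lambda\ell\,\mcE_1^{1/2}\gg\mcE_1^{1/2}$, violating \eqref{est-w}. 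The paper's waves oscillate \emph{perpendicular} to $\f_1$: the phases are the linear functions $\xi_I = s(I)[\kappa(I)]\,\f_1^\perp\cdot(x-u_{\kappa(I)}t)$ of \eqref{def:xi_I}, so that $\f_1\cdot\nabla_x\xi_I=0$ and $e^{i\lambda\xi_I}a_I\f_1$ is divergence-free to leading order; the residual divergence is then genuinely lower order and is removed by the explicit correctors \eqref{def:w_I}.

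A second discrepancy is in the localization. You partition only in $(t,y)$ at the coarse scale $\ell^2\times\ell$ and transport the phases by the full mollified flow; the paper instead uses \emph{linear} (frozen-coefficient) phases together with a quadratic partition of unity in all of $(t,x,y)$ at the much finer scales $\tau,\ell_x,\ell_y$ of \eqref{def:ell}, each carrying a factor $N^{-1/3}$, with the amplitudes $(a_I,b_I)$ and drift velocity $u_I$ frozen at patch centers. It is precisely this $N^{-1/3}$ in the microlocalization scales that produces the gain $\tilde\mcE_3=N^{-1/3}\mcE_u^{1/2}\mcE_1^{1/2}$, through the mollification and frozen-velocity errors of Lemma~\ref{Lemma:mol-errors} and the bound \eqref{ineq:u_ell-u_I} feeding into the transport term $R_T$; your sketch does not indicate what replaces this mechanism.
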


\section{The Corrections}\label{sec:corrections}

\subsection{Preliminaries}
Given two positive numbers $a$ and  $b$, denote
\begin{align*}
a \lesssim b \iff a \leq C b,
\end{align*}
for some positive constant $C$ that is independent of the parameter $N$ and the frequency-energy levels $(\Xi,\mcE_u,\mcE_1,\mcE_2,\mcE_3)$ in Lemma \ref{Lemma:Iteration}.

Set the frequency parameter to be
\begin{equation}
\lambda = B^3 N \Xi,
\end{equation}
where $B>3$ is a constant to be chosen later.
The time, tangential and vertical length scale parameters of the constructions are chosen to be
\begin{equation}
\tau = B^{-1}\Xi^{-1}N^{-1/3}\mcE_u^{-1/2}, \ell_x = B^{-1}\Xi^{-1}N^{-1/3},  \ell_y = B^{-1} \Xi^{-1/2}N^{-1/3}\mcE_u^{-1/4}. \label{def:ell}
\end{equation}
It follows from \eqref{E_i} and  \eqref{ineq:N} that
\begin{align}
\tau  & = \ell \ell_y \leq  \ell_y = B^{-1}N^{-1/3} \ell \leq \ell \leq 1, \label{ineq:tau-ell}\\
N &\geq \left(\frac{\mcE_u }{\mcE_3}\right)^{3/2} \left(\frac{\mcE_1 }{\mcE_3}\right)^{3/2} \geq \left(\frac{\mcE_u}{\mcE_1}\right)^{3/2}. \label{ineq:N-Cor}
\end{align}

\subsubsection{Transport estimates} \label{subsubsec:transport-est} For later applications, some elementary transport estimates are recorded in this section, which are just anisotropic versions of those in \cite{Isett12}. The proofs are given in Appendix \ref{sec:transport-estimates}, for completeness.

\begin{Lemma}\label{Lemma:transport-scaling}
	Let $\ell_1,\cdots,\ell_d, \delta_U, \delta_f$ be positive numbers and $m \geq 1$ be a positive integer. Suppose that $\bar{U}(t,z)$ is a smooth vector field on $\R \times \R^d$, and $f(t,z)$ is a smooth solution to the Cauchy problem
	\begin{equation}
	(\ptl_t + \bar{U} \cdot \nabla_z)f = 0, \quad f|_{t=0}=f_0, \label{eq:transport-f}
	\end{equation}
	with the following estimates
	\begin{equation}
	\begin{aligned}
	\|\ptl_{z}^{\alpha} \bar{U}\|_{L^\infty}& \leq C_{\alpha} \ell^{-\alpha} \delta_{U}, \quad \text{ for } 1 \leq |\alpha| \leq m,  \\
	\|\ptl_{z}^{\alpha} f_0\|_{L^\infty}   & \leq C_{\alpha}\ell^{-\alpha}\delta_f ,\quad  \text{ for } 0 \leq |\alpha| \leq m,   \label{ineq:U-f_0}
	\end{aligned}
	\end{equation}
	where for any multi-index $\alpha=(\alpha_1,\cdots,\alpha_d)$,
	\[  \ptl_{z}^{\alpha} =  (\frac{\ptl}{\ptl z_1})^{\alpha_1} \cdots (\frac{\ptl}{\ptl z_d})^{\alpha_d}, \quad  \ell^{\alpha}=(\ell_1)^{\alpha_1}\cdots (\ell_d)^{\alpha_d}. \]
	Then, for $0 \leq |\alpha| \leq m$, it holds that
	\begin{align}
	\|\ptl_{z}^{\alpha} f(t,\cdot)\|_{L^\infty}   \leq \tilde{C}_{\alpha}\ell^{-\alpha}\delta_f , \quad \text{ for } |t|  \leq \delta_{U}^{-1} \min_i \ell_i, \label{ineq:ptl_z-f}
	\end{align}
	where $\tilde{C}_{\alpha}$ are functions of the constants $\{C_{\beta}: |\beta| \leq |\alpha|\}$.
\end{Lemma}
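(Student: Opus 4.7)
The plan is to propagate anisotropic derivative estimates along the flow $\Phi_t$ of $\bar U$, combining the standard commutator--Gr\"{o}nwall argument for transport equations with the anisotropic scaling structure of the hypotheses. Applying $\partial_z^\alpha$ to \eqref{eq:transport-f} and using the Leibniz rule gives the identity
\[
(\partial_t + \bar U \cdot \nabla_z)\, \partial_z^\alpha f \;=\; -\sum_{k=1}^{d}\sum_{0<\beta\leq\alpha}\binom{\alpha}{\beta}(\partial_z^\beta \bar U_k)\,(\partial_z^{\alpha-\beta+e_k}f),
\]
so each $\partial_z^\alpha f$ satisfies an inhomogeneous transport equation along $\Phi_t$ whose source involves at least one $\bar U$-derivative and $f$-derivatives of order at most $|\alpha|$.

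Introducing the rescaled quantities $N_\alpha(t):=\ell^{\alpha}\|\partial_z^\alpha f(t,\cdot)\|_{L^\infty}$, I would show by induction on $n\leq m$ that $N_\alpha(t)\leq \tilde C_\alpha\,\delta_f$ for all $|\alpha|\leq n$ and all $|t|\leq \delta_U^{-1}\min_i\ell_i$. The base case $n=0$ is immediate since transport preserves the $L^\infty$ norm. For the inductive step, the hypothesis \eqref{ineq:U-f_0} together with the inductive control on lower-order $f$-derivatives bounds each source term by
\[
\|(\partial_z^\beta \bar U_k)\,(\partial_z^{\alpha-\beta+e_k}f)\|_{L^\infty}\;\lesssim\;\ell^{-\beta}\delta_U\cdot\ell^{-(\alpha-\beta+e_k)}\delta_f\;=\;\ell^{-\alpha}\,\ell_k^{-1}\,\delta_U\delta_f,
\]
and integration along the flow of $\bar U$ produces a Gr\"{o}nwall-type inequality in which the additional factor $\ell_k^{-1}$ is precisely absorbed by the time window through
\[
|t|\sum_{k=1}^{d}\ell_k^{-1}\;\leq\;\delta_U^{-1}\min_i\ell_i\cdot\sum_{k=1}^{d}\ell_k^{-1}\;\leq\;d\,\delta_U^{-1}.
\]
This recovers $N_\alpha(t)\leq \tilde C_\alpha\,\delta_f$ and closes the induction.

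The principal subtlety, and what I expect to be the main technical point, arises when $|\beta|=1$: the source then still contains derivatives of $f$ of the top order $|\alpha|$, so a strict term-by-term induction on $|\alpha|$ does not close. This is resolved by grouping all multi-indices of a given order $n$ into a single vector-valued quantity, which satisfies a coupled inhomogeneous transport system along $\Phi_t$ and closes via Gr\"{o}nwall exactly through the time-window bound above. An equivalent and perhaps more transparent route is an anisotropic rescaling $\tilde z_i=z_i/\ell_i$ with rescaled vector field $\tilde U_i=\bar U_i/\ell_i$: the hypothesis \eqref{ineq:U-f_0} becomes isotropic in $\tilde z$ with unit length scale and amplitude $\delta_U$, while the window $|t|\leq\delta_U^{-1}\min_i\ell_i$ corresponds to unit transport time in the rescaled geometry, so that the classical isotropic transport estimate (as used in \cite{Isett12}) applies directly and yields the stated bounds \eqref{ineq:ptl_z-f}.
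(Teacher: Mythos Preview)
Your proposal is correct and matches the paper's approach: the paper proceeds exactly via the anisotropic rescaling you describe in your final paragraph (setting $z_i'=z_i/\ell_i$, $t'=t/\tau_0$ with $\tau_0=\delta_U^{-1}\min_i\ell_i$, and $\bar U_i'=\tau_0\bar U_i/\ell_i$), reducing to the normalized case $\ell_i=\delta_U=\delta_f=1$ and then closing the induction on the derivative order by the same commutator--Gr\"onwall argument you outline, including the grouping of all top-order derivatives $|\alpha|=s$ into a single quantity to handle the $|\beta|=1$ terms. Your direct ``weighted $N_\alpha$'' route is an equivalent repackaging of the same computation.
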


\begin{Lemma}\label{Lemma:flow-scaling}
	Let $\ell_1,\cdots,\ell_d, \delta_U$ be positive numbers. Suppose that $\bar{U}(t,z)$ is a smooth vector field on $\R \times \R^d$,  $\Phi_s(t,z) = (t+s,\Phi_s^1(t,z), \cdots, \Phi_s^d(t,z))$ is the flow generated by $(\ptl_t + \bar{U} \cdot\nabla_z )$, i.e., $\Phi_s$ is
	the unique solution to
	\begin{align}
	\frac{d}{ds} \Phi_s(t,z) = (1,\bar{U}(\Phi_s(t,z))), \quad \Phi_0(t,z)=(t,z), \quad \forall (t,z) \in \R \times \R^d, \label{def:Phi_s}
	\end{align}
	and $p=(p_1,\cdots,p_d), q=(q_1,\cdots,q_d) \in \mathbb{R}^d$ are two points such that
	\begin{align}
	\|\frac{\ptl}{\ptl z_i} \bar{U}\|_{L^\infty}\leq A_1 \ell_i^{-1} \delta_{U},\quad  |p_{i} - q_i|   \leq A_2 \ell_i, \quad \text{ for } i=1,\cdots,d, \label{ineq:U-p-q}
	\end{align}
	where $A_1, A_2$ are two positive constants.
	Then, for $i=1,\cdots,d$,
	\begin{align}
	|\Phi_s^i(t,p) - \Phi_s^i(t,q)|   \leq A_2 e^{A_1} \ell_i, \quad \text{ for } |s|  \leq \delta_{U}^{-1} \min_i \ell_i. \label{ineq:Phi_s}
	\end{align}
\end{Lemma}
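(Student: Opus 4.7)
The plan is to treat this as a Gronwall-type estimate for the separation of two ODE trajectories, with the twist that the ambient scales are anisotropic across the coordinates $z_1,\dots,z_d$.

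First I would set $X(s) = \Phi_s(t,p)$ and $Y(s) = \Phi_s(t,q)$ and use the defining ODE \eqref{def:Phi_s} to write
\begin{align*}
\frac{d}{ds}\bigl(X^i(s) - Y^i(s)\bigr) = \bar U^i(t+s, X(s)) - \bar U^i(t+s, Y(s)).
\end{align*}
The fundamental theorem of calculus along the straight segment from $Y(s)$ to $X(s)$ gives
\begin{align*}
\bar U^i(t+s, X) - \bar U^i(t+s, Y) = \sum_{j=1}^d (X^j - Y^j)\int_0^1 \frac{\ptl \bar U^i}{\ptl z_j}\bigl(t+s, Y + r(X-Y)\bigr)\,dr,
\end{align*}
and plugging in the hypothesis $\|\ptl_j \bar U\|_{L^\infty}\leq A_1 \ell_j^{-1}\delta_U$ yields the key pointwise inequality
\begin{align*}
\Bigl|\tfrac{d}{ds}(X^i - Y^i)\Bigr| \leq A_1 \delta_U \sum_{j=1}^d \ell_j^{-1}\,|X^j(s) - Y^j(s)|.
\end{align*}

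Next, I would rescale to the dimensionless quantities $D_i(s) = \ell_i^{-1}|X^i(s) - Y^i(s)|$, so that the hypothesis $|p_i - q_i|\leq A_2 \ell_i$ reads $D_i(0)\leq A_2$ and the desired conclusion becomes $D_i(s)\leq A_2 e^{A_1}$. In these variables the differential inequality reads
\begin{align*}
|D_i'(s)| \leq A_1 \delta_U\, \ell_i^{-1} \sum_{j=1}^d D_j(s).
\end{align*}
I would then run a standard bootstrap/Gronwall argument on the running maximum $D_\ast(s)=\max_i D_i(s)$. Integrating the bound above and using the hypothesis $|s|\leq \delta_U^{-1}\min_j \ell_j$, which forces $A_1 \delta_U\, \ell_i^{-1}\,|s|\leq A_1$ for every $i$, the Gronwall inequality produces the exponential factor $e^{A_1}$ and closes out $D_\ast(s)\leq A_2 e^{A_1}$ on the given time interval.

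The only real subtlety is the anisotropy: different coordinate directions have different sensitivity to perturbations, and one has to be careful not to collapse everything into the Euclidean norm (which would lose the directionally-sharp bound). The time restriction $|s|\leq \delta_U^{-1}\min_i \ell_i$ is precisely the threshold at which the expansion in every coordinate direction remains controlled simultaneously, which is why the Gronwall application must be run on the rescaled vector $(D_1,\dots,D_d)$ rather than a single scalar quantity. This is the only place where dimension-dependent combinatorial constants enter; in the paper's conventions they are absorbed into $A_1$.
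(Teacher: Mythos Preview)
Your approach is correct and is essentially the paper's: rescale the coordinate differences by their associated scales $\ell_i$ and apply Gronwall. The paper does this via the change of variables $z_i'=z_i/\ell_i$ and then runs the standard Euclidean-norm Gronwall on the rescaled flow, which is equivalent to your argument with $D_i=\ell_i^{-1}|X^i-Y^i|$; contrary to your final remark, the Euclidean norm works fine once the rescaling has removed the anisotropy, and both routes pick up the same harmless dimensional constant.
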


\subsubsection{Mollifications and partitions of unity}
As in  \cite{ContidLSz12,dLSz5},   mollifications are employed  to deal with the potential loss of derivatives in the iteration. Set
\begin{align}
\mfD = P_{t,y}^{-1}(N(\supp \e; 2\ell^2, 2\ell)), \quad \mfD' = P_{t,y}^{-1}(N(\supp \e; 10\ell^2, 10\ell)). \label{def:mfD}
\end{align}
Let $(u^{ext},v^{ext})$ be a Lipschitz extension of $(u,v)$ in $\Rp \times \mathbb{T}^2 \times \mathbb{R}$ that coincides with $(u,v)$ in $\mfD'$.
Let $\bar{\eta} \in C_c^\infty(\mathbb{R})$ be a smooth even function such that
\begin{align*}
\supp \bar{\eta} \in (-3/4,3/4), \quad \bar{\eta}(s)=1, \text{ for } |s| < 5/8, \quad  \int \bar{\eta}(s)ds = 1,
\end{align*}
and set $\bar{\eta}_{\eps}(s) = \frac{1}{\eps}\bar{\eta}(\frac{s}{\eps})$ for any $\eps > 0$.
$(u,v)$ are mollified in space as follows:
\begin{align}
(u_\ell, v_\ell)(t,x,y) = ((u^{ext},v^{ext}) * \tilde{\eta}_{\ell})(t,x,y),
\label{def:u_ell-v_ell}\\
\text{where} \quad
\tilde{\eta}_{\ell}(x_1,x_2,y) = \bar{\eta}_{\ell_x}(x_1)\bar{\eta}_{\ell_x}(x_2)\bar{\eta}_{\ell_y}(y). \nonumber
\end{align}
Note that $(u_\ell, v_\ell)$ will be used only  in the domain $\mfD$, thus the choice of the extension makes no difference in the constructions. Clearly
\begin{align}
\nabla_x \cdot u_{\ell} + \ptl_y v_{\ell} = (\nabla_x \cdot u + \ptl_y v) * \tilde{\eta}_{\ell} = 0 \quad \text{ in } \mfD. \label{eq:div-free-u-v-ell}
\end{align}

It follows from the definitions \eqref{def:u_ell-v_ell} and the estimates \eqref{est-u} that
\begin{align}
\|(u - u_{\ell},v-v_{\ell})\|_{L^{\infty}(\mfD')} & \leq \ell_x \|\nabla_x (u,v)\|_{L^{\infty}} + \ell_y \|\ptl_y (u,v)\|_{L^{\infty}} \nonumber \\
& \leq B^{-1}N^{-1/3}\mcE_u^{1/2}, \label{ineq:u-u_ell}
\end{align}
and
\begin{equation}
\begin{aligned}
\|\nabla_x^{\alpha} \ptl_y^{\beta}\nabla_x(u_\ell, v_\ell)\|_{L^{\infty}(\mfD')}   & \leq \|\nabla_x^{\alpha} \ptl_y^{\beta} \tilde{\eta}\|_{L^{\infty}} \| \|\nabla_x (u,v)\|_{L^{\infty}}  \leq C_{\alpha,\beta} \ell_x^{-|\alpha|} \ell_y^{-\beta} \Xi \mcE_u^{1/2}, \\
\|\nabla_x^{\alpha} \ptl_y^{\beta}\ptl_y (u_\ell, v_\ell)\|_{L^{\infty}(\mfD')} &  \leq \|\nabla_x^{\alpha} \ptl_y^{\beta} \tilde{\eta}\|_{L^{\infty}} \| \|\ptl_y (u,v)\|_{L^{\infty}}  \leq C_{\alpha,\beta} \ell_x^{-|\alpha|} \ell_y^{-\beta} \Xi^{1/2} \mcE_u^{3/4}. \label{est-D_x-u_ell}
\end{aligned}
\end{equation}

The quadratic partitions of the unity adapted to the coarse flow $(u_\ell, v_\ell)$ are constructed explicitly below following \cite{Isett12}.
Let
\[\eta(s) = \frac{\bar{\eta}(s)}{\sqrt{\sum_{k \in \mathbb{Z}}\bar{\eta}^2(s-k)}}. \]
Then  $\{\eta(s-k):k \in \mathbb{Z}\}$ forms a quadratic partition of the unity satisfying
\[ \sum_{k \in \mathbb{Z}}\eta^2(s-k) = 1, \quad s \in \mathbb{R}. \]
For $\kappa_0 \in \mathbb{Z}$, set
\begin{align}
\eta_{\kappa_0}(t)=\eta\left(\tau^{-1}(t - \kappa_0\tau)\right). \label{def:eta_kappa_0}
\end{align}
Then $\eta_{\kappa_0}$ is supported in $((\kappa_0-\frac{3}{4}) \tau,(\kappa_0+\frac{3}{4}) \tau)$ with the estimates
\begin{align}
\|\frac{d^{\alpha}}{d t^{\alpha}} \eta_{\kappa_0}\|_{L^{\infty}} \leq C_{\alpha} \tau^{-\alpha}, \label{est-eta_k_0}
\end{align}
and $\{\eta_{\kappa_0}(t): \kappa_0 \in \mathbb{Z} \}$ forms a quadratic partition of the unity such that
\begin{align}
\sum_{\kappa_0 \in \mathbb{Z}}\eta_{\kappa_0}^2(t) \equiv 1, \quad t \in \mathbb{R}. \label{eq:eta_kappa_0}
\end{align}
For the tangential direction in the periodic setting, let $\mathbb{Z} \ni H \geq 0$ satisfy
\[ 2^{-H}  \leq \ell_x < 2^{-(H-1)}, \]
and let $\eta_H$ be the $2^H$-periodic extension of $\eta$, i.e., $\eta_H(s) = \sum_{k \in \mathbb{Z}}\eta(s-2^{H} k)$.
Denote the group $\mathbb{Z}$ modulo $2^{H}$ by $\mathbb{Z}/2^{H}\mathbb{Z}$. It is easy to see that
\[ \sum_{k \in \mathbb{Z}/2^{H}\mathbb{Z}}\eta_H^2(s-k) = 1. \]
For $\tilde{\kappa} = (\kappa_1,\kappa_2, \kappa_3) \in  (\mathbb{Z}/2^{H}\mathbb{Z})^2\times \mathbb{Z}$, set
\begin{align*}
\tilde{\psi}_{\tilde{\kappa}}(x_1,x_2,y) = \eta_H\left(2^H(x_1-\kappa_1 2^{-H})\right)\eta_H\left(2^H(x_2-\kappa_2 2^{-H})\right)\eta\left(\ell_y^{-1}(y-\kappa_3 \ell_y)\right),\\
\tilde{Q}_{\tilde{\kappa}}:=[(\kappa_1-\frac{3}{4}) \frac{1}{2^H},(\kappa_1+\frac{3}{4}) \frac{1}{2^H}] \times [(\kappa_2-\frac{3}{4}) \frac{1}{2^H},(\kappa_2+\frac{3}{4}) \frac{1}{2^H}] \times [(\kappa_3-\frac{3}{4}) \ell_y,(\kappa_3+\frac{3}{4}) \ell_y].
\end{align*}
It is easy to check that $\{\tilde{\psi}_{\tilde{\kappa}} \in C_c^\infty(\tilde{Q}^{\tilde{\kappa}})\}$ forms a quadratic partition of the unity adapted to the cubes $\{\tilde{Q}_{\tilde{\kappa}}:\tilde{\kappa}   \in  (\mathbb{Z}/2^{H}\mathbb{Z})^2\times \mathbb{Z}\}$, such that
\begin{align*}
\sum_{\tilde{\kappa}   \in  (\mathbb{Z}/2^{H}\mathbb{Z})^2\times \mathbb{Z}}\tilde{\psi}_{\tilde{\kappa}}^2(x,y) \equiv 1, \text{ for } (x,y) \in \mathbb{T}^2 \times \mathbb{R}, \quad \text{ and } \quad \supp \tilde{\psi}_{\tilde{\kappa}} \subset \tilde{Q}_{\tilde{\kappa}}.
\end{align*}
Furthermore, the following estimates hold for any $\alpha, \beta \geq 0$:
\begin{equation}
\| \nabla_{x}^\alpha \ptl_y^\beta \tilde{\psi}_{\tilde{\kappa}} \|_{L^{\infty}} \leq C(\alpha,\beta)\ell_x^{-\alpha} \ell_y^{-\beta}. \label{est-psi-o}
\end{equation}

Let $\Phi_s$ be the flow generated by the mollified space-time vector field $(\ptl_t + u_\ell \cdot\nabla_x + v_{\ell} \ptl_y)$, i.e. $\Phi_s$ is the unique solution to
\begin{align}
\frac{d}{ds} \Phi_s(t,x,y) = (1,u_\ell(\Phi_s(t,x,y)),v_\ell(\Phi_s(t,x,y))), \quad \Phi_0(t,x,y)=(t,x,y). \label{def:barPhi_s}
\end{align}

For $\kappa = (\kappa_0,\kappa_1,\kappa_2, \kappa_3) \in \mathbb{Z} \times (\mathbb{Z}/2^{H}\mathbb{Z})^2\times \mathbb{Z}$, let $Q_{\kappa}$ be the image of $\tilde{Q}_{\tilde{\kappa}}$ under the coarse flow map $\Phi$, and $q_{\kappa}$ its `center', i.e.,
\begin{equation}
\begin{aligned}
Q_{\kappa} &= \{\Phi_{s}(\kappa_0 \tau,x,y): |s| < \frac{3}{4}\tau, (x,y) \in \tilde{Q}_{\tilde{\kappa}} \}, \\
q_{\kappa} &= (t_{\kappa},x_{\kappa},y_{\kappa}) := (\kappa_0 \tau,\kappa_1 2^{-H},\kappa_2 2^{-H}, \kappa_3 \ell_y) \in Q_{\kappa} \label{def:p_kappa}.
\end{aligned}
\end{equation}
Define $\psi_{\kappa}$ to be the unique solution to
\begin{align}
(\ptl_t + u_{\ell} \cdot \nabla_x + v_{\ell} \ptl_y ) \psi_{\kappa} = 0,  \quad \psi_{\kappa}(\kappa_0\tau,x,y) = \tilde{\psi}_{\tilde{\kappa}}(x,y). \label{eq:psi_kappa}
\end{align}
Then
\[ \frac{d}{dt}\psi_{\kappa}(\Phi_{t-\kappa_0 \tau}(\kappa_0 \tau,x,y)) = 0.   \]
Recalling that $\supp \tilde{\psi}_{\tilde{\kappa}} \subset \tilde{Q}^{\tilde{\kappa}}$, one has
\begin{gather}
\supp \eta_{\kappa_0}\psi_{\kappa} \subset Q_{\kappa} \label{supp-eta-psi_k}, \\
\sum_{(\kappa_1,\kappa_2,\kappa_3) \in (\mathbb{Z}/2^{H}\mathbb{Z})^2\times \mathbb{Z}}\psi_\kappa^2(t,x,y) \equiv 1.
\end{gather}
It follows from \eqref{eq:eta_kappa_0} that
\begin{align}
\sum_{\kappa}\eta_{\kappa_0}^2(t)\psi_{\kappa}^2(t,x,y) = \sum_{\kappa_0}\eta_{\kappa_0}(t)^2\sum_{\kappa_1, \kappa_2, \kappa_3}\psi_{\kappa}^2(t,x,y) \equiv 1. \label{sum_eta-psi}
\end{align}
Denote the mollified vector field by
\begin{align}
\barDt &= \ptl_t + u_\ell(t,x,y) \cdot \nabla_x + v_{\ell}(t,x,y) \ptl_y.
\end{align}
It follows from \eqref{est-u}, \eqref{eq:ell} and \eqref{ineq:u-u_ell} that
\begin{align}
  \| \barDt (u,v) \|_{L^{\infty}(\mfD')} \nonumber &\leq  \| (\ptl_t + u \cdot \nabla_x + v \ptl_y ) (u,v) \| \\
& + \|u - u_{\ell}\|_{L^{\infty}(\mfD')} \|\nabla_x (u,v)\| + \|v-v_{\ell}\|_{L^{\infty}(\mfD')} \|\ptl_y (u,v)\| \nonumber \\
\leq & \Xi \mcE_u + B^{-1}N^{-1/3}\Xi \mcE_u + B^{-1}N^{-1/3}\Xi^{1/2} \mcE_u^{5/4} \lesssim \Xi \mcE_u. \label{est-u-Dt}
\end{align}
As a consequence of \eqref{est-u},  \eqref{est-D_x-u_ell}, and \eqref{est-u-Dt}, one has, for $(t,x,y) \in \mfD$,
\begin{align}
	&|\barDt \nabla_x^{\alpha} \ptl_y^{\beta}(u_\ell, v_\ell)(t,x,y)| \nonumber \\
	&= | \iint  \nabla_x^{\alpha} \ptl_y^{\beta} \tilde{\eta}_{\ell}(x',y') \Big\{ \barDt (u,v)(t,x - x',y - y') \nonumber \\
	 &\quad +  (u_{\ell}(t,x,y) - u_{\ell}(t,x-x',y-y')) \cdot \nabla_x  (u,v)(t,x - x',y - y') \nonumber \\
	 &\quad +  (v_{\ell}(t,x,y) - v_{\ell}(t,x-x',y-y')) \ptl_y  (u,v)(t,x - x',y - y')   \Big\}dx'dy' | \nonumber \\
	&\leq \iint \nabla_x^{\alpha} \ptl_y^{\beta} \tilde{\eta}_{\ell}(x',y') \Big\{ \| \barDt (u,v) \|_{L^{\infty}(\mfD')}  \nonumber \\
	& \quad \quad + (|x'| \|\nabla_{x} u_{\ell}\|_{L^{\infty}(\mfD')} + |y'| \| \ptl_y u_{\ell} \|_{L^{\infty}(\mfD')})\|\nabla_x  (u,v)\|_{L^{\infty}(\mfD')}\nonumber \\
	& \quad \quad   + (|x'| \|\nabla_{x} v_{\ell}\|_{L^{\infty}(\mfD')} + |y'| \| \ptl_y v_{\ell} \|_{L^{\infty}(\mfD')})\| \ptl_y (u,v) \|_{L^{\infty}(\mfD')} \Big\}  dx'dy' \nonumber \\
	&\leq C(\alpha,\beta) \ell_x^{-\alpha} \ell_y^{-\beta} \Big\{ \| \barDt (u,v) \| + (\ell_x \|\nabla_{x} u_{\ell}\| + \ell_y \| \ptl_y u_{\ell} \|)\| \nabla_x  (u,v)\|\nonumber \\
	& \quad \quad  + (\ell_x \|\nabla_{x} v_{\ell} \| + \ell_y \| \ptl_y v_{\ell} \|)\| \ptl_y  (u,v)\|  \Big\} \nonumber \\
	& \leq C(\alpha,\beta) \ell_x^{-\alpha} \ell_y^{-\beta} \Xi \mcE_u \label{est-barDtDxDy}
\end{align}
where \eqref{eq:ell} has been used in the last inequality.
In view of the following commuting relations
\begin{align}
[\nabla_x, \barDt] = \nabla_x u_{\ell} \cdot \nabla_x + (\nabla_x v_{\ell}) \ptl_y, \quad [\ptl_y, \barDt] = \ptl_y u_{\ell} \cdot \nabla_x + (\ptl_y v_{\ell}) \ptl_y, \label{eq:comm-D_x-D_t}
\end{align}
and the estimates \eqref{est-D_x-u_ell}, one can write
\begin{align}
\nabla_x^{\alpha} \ptl_y^{\beta}\barDt&= \barDt\nabla_x^{\alpha} \ptl_y^{\beta} + \sum_{\mathclap{a+b=\alpha, |b| \geq 1}} \nabla_x^{a} [\nabla_x, \barDt] \nabla_x^{b-1} \ptl_y^{\beta}   + \sum_{\mathclap{a+b=\beta, |b| \geq 1}} \nabla_x^{\alpha} \ptl_y^{a}[\ptl_y, \barDt] \ptl_y^{b-1}, \nonumber\\
=& \barDt\nabla_x^{\alpha} \ptl_y^{\beta} + \sum_{\mathclap{a+b=\alpha, |b| \geq 1}} C_{a,b}(\nabla_x^{a+1} u_{\ell} \cdot \nabla_x^{b} \ptl_y^{\beta} + \nabla_x^{a+1} v_{\ell} \cdot \nabla_x^{b-1} \ptl_y^{\beta+1}) \nonumber \\
\quad & + \sum_{\mathclap{c+d = \alpha, a+b=\beta, |b| \geq 1}} C_{a,b,c,d}(\nabla_x^{c} \ptl_y^{a+1} u_{\ell} \cdot \nabla_x^{d+1} \ptl_y^{b-1} + \nabla_x^{c} \ptl_y^{a+1} v_{\ell} \cdot \nabla_x^{d} \ptl_y^{b}). \label{eq:barDtDxDy}
\end{align}
It follows from \eqref{eq:ell},  \eqref{est-D_x-u_ell} and \eqref{est-barDtDxDy} that, for $|\alpha|,|\alpha'|,\beta,\beta' \geq 0$,
\begin{align}
&\|\nabla_x^{\alpha} \ptl_y^{\beta}\barDt\nabla_x^{\alpha'} \ptl_y^{\beta'}(u_\ell, v_\ell)\|_{L^{\infty}(\mfD)} \nonumber  \lesssim \|\barDt\nabla_x^{\alpha + \alpha'} \ptl_y^{\beta+\beta'}(u_\ell, v_\ell)\|\\
& \quad  +   \sum_{\mathclap{a+b=\alpha, |b| \geq 1}} \|(\nabla_x^{a+1} u_{\ell} \cdot \nabla_x^{\alpha'+b} \ptl_y^{\beta+\beta'} + \nabla_x^{a+1} v_{\ell} \cdot \nabla_x^{\alpha'+b-1} \ptl_y^{\beta+\beta'+1})(u_\ell, v_\ell)\| \nonumber \\
& \quad + \sum_{\mathclap{c+d = \alpha, a+b=\beta, |b| \geq 1}} \|(\nabla_x^{c} \ptl_y^{a+1} u_{\ell} \cdot \nabla_x^{d+\alpha'+1} \ptl_y^{\beta'+b-1} + \nabla_x^{c} \ptl_y^{a+1} v_{\ell} \cdot \nabla_x^{d+\alpha'} \ptl_y^{\beta'+b})(u_\ell, v_\ell)\| \nonumber \\
& \leq C(\alpha,\beta,\alpha',\beta') \ell_x^{-|\alpha+\alpha'|} \ell_y^{-(\beta+\beta')} \Xi \mcE_u. \label{est-D_t-u_ell}
\end{align}

Recalling \eqref{def:ell}, \eqref{est-psi-o} and  \eqref{eq:psi_kappa},
applying Lemma \ref{Lemma:transport-scaling} to $\psi_{\kappa}$ with
\begin{align*}
\bar{U}=(u_{\ell},v_{\ell}),\quad t=t-\kappa_0\tau,\quad  z_1=x_1,\quad z_2=x_2, \quad z_3=y,\\
\ell^1 = \ell^2 = \ell_x,\quad \ell^3 = \ell_y, \quad \delta_U = \mcE_U^{1/2}, \quad \delta_f = 1,
\end{align*}
one has the following transport estimates, for any $Q_{\kappa} \subset \mfD'$,
\begin{equation}
\|\nabla_{x}^\alpha \ptl_y^\beta \psi_\kappa \|_{L^{\infty}(Q_{\kappa})}\leq C(\alpha,\beta)\ell_x^{-\alpha} \ell_y^{-\beta}, \text{ for any } \alpha, \beta \geq 0. \label{est-psi}
\end{equation}
It follows from  the identities
\eqref{eq:barDtDxDy} and $\barDt \psi_\kappa \equiv 0$ that
\begin{align}
&\barDt \nabla_x^{\alpha} \ptl_y^{\beta}\psi_\kappa  = - \sum_{\mathclap{a+b=\alpha, |b| \geq 1}} \nabla_x^{a} [\nabla_x, \barDt] \nabla_x^{b-1} \ptl_y^{\beta} \psi_\kappa   - \sum_{\mathclap{a+b=\beta, |b| \geq 1}} \nabla_x^{\alpha} \ptl_y^{a}[\ptl_y, \barDt]  \ptl_y^{b-1} \psi_\kappa \nonumber \\
 = & -\sum_{\mathclap{a+b=\alpha, |b| \geq 1}} C_{a,b}(\nabla_x^{a+1} u_{\ell} \cdot \nabla_x^{b} \ptl_y^{\beta} + \nabla_x^{a+1} v_{\ell} \cdot \nabla_x^{b-1} \ptl_y^{\beta+1})\psi_\kappa \nonumber \\
 \quad & - \sum_{\mathclap{c+d = \alpha, a+b=\beta, |b| \geq 1}} C_{a,b,c,d}(\nabla_x^{c} \ptl_y^{a+1} u_{\ell} \cdot \nabla_x^{d+1} \ptl_y^{b-1} + \nabla_x^{c} \ptl_y^{a+1} v_{\ell} \cdot \nabla_x^{d} \ptl_y^{b})\psi_\kappa. \label{Dt-psi'}
\end{align}
Due to \eqref{eq:ell}, \eqref{est-D_x-u_ell} and \eqref{est-psi}, it holds that, for any $Q_{\kappa} \subset \mfD'$,
\begin{align}
\|\barDt \nabla_{x}^\alpha \ptl_y^\beta \psi_\kappa \|_{L^{\infty}(Q_{\kappa})}\leq C(\alpha,\beta)\ell_x^{-\alpha} \ell_y^{-\beta} \tau^{-1}. \label{Dt-psi-1}
\end{align}
Using the identities \eqref{eq:barDtDxDy} and \eqref{Dt-psi'}, one can write
\begin{align*}
&\barDt \nabla_x^{\alpha} \ptl_y^{\beta} \barDt \nabla_x^{\alpha'} \ptl_y^{\beta'} \psi_\kappa \\
=& \barDt \Big\{\barDt\nabla_x^{\alpha + \alpha'} \ptl_y^{\beta+\beta'} + \sum_{\mathclap{a+b=\alpha, |b| \geq 1}} C_{a,b}(\nabla_x^{a+1} u_{\ell} \cdot \nabla_x^{\alpha'+b} \ptl_y^{\beta+\beta'} + \nabla_x^{a+1} v_{\ell} \cdot \nabla_x^{\alpha'+b-1} \ptl_y^{\beta+\beta'+1}) \nonumber \\
\quad & + \sum_{\mathclap{c+d = \alpha, a+b=\beta, |b| \geq 1}} C_{a,b,c,d}(\nabla_x^{c} \ptl_y^{a+1} u_{\ell} \cdot \nabla_x^{d+\alpha'+1} \ptl_y^{\beta'+b-1} + \nabla_x^{c} \ptl_y^{a+1} v_{\ell} \cdot \nabla_x^{d+\alpha'} \ptl_y^{\beta'+b})\Big\}\psi_\kappa\\
= & \barDt \Big\{ \sum_{\mathclap{a+b=\alpha+\alpha', |b| \geq 1}} C_{a,b}(\nabla_x^{a+1} u_{\ell} \cdot \nabla_x^{b} \ptl_y^{\beta+\beta'} + \nabla_x^{a+1} v_{\ell} \cdot \nabla_x^{b-1} \ptl_y^{\beta+\beta'+1}) \nonumber \\
\quad & + \sum_{\mathclap{c+d = \alpha+\alpha', a+b=\beta+\beta', |b| \geq 1}} C_{a,b,c,d}(\nabla_x^{c} \ptl_y^{a+1} u_{\ell} \cdot \nabla_x^{d+1} \ptl_y^{b-1} + \nabla_x^{c} \ptl_y^{a+1} v_{\ell} \cdot \nabla_x^{d} \ptl_y^{b}) \Big\}\psi_\kappa.
\end{align*}
It follows from the estimates \eqref{eq:ell}, \eqref{est-D_x-u_ell}, \eqref{est-D_t-u_ell}, \eqref{est-psi} and \eqref{Dt-psi-1} that for $0 \leq \gamma+\gamma' \leq 2$, $|\alpha|,|\alpha'|,\beta,\beta' \geq 0$, and any $Q_{\kappa} \subset \mfD$,
\begin{align}
\|(\barDt)^{\gamma}\nabla_x^{\alpha} \ptl_y^{\beta} (\barDt)^{\gamma'}\nabla_x^{\alpha'} \ptl_y^{\beta'}\psi_\kappa \|_{L^{\infty}(Q_{\kappa})}
\leq C(\alpha,\beta,\alpha',\beta')
\ell_x^{-|\alpha+\alpha'|}\ell_y^{-(\beta+\beta')}\tau^{-(\gamma+\gamma')}. \label{Dt-psi}
\end{align}

\subsection{Corrections of the velocity}
The new velocity is chosen to be
\begin{align*}
(\tilde{u},\tilde{v})= (u,v) + (U,V) = (u,v) + w.
\end{align*} Here the correction $w=(U,V)$ is the sum of individual waves $w_I$ of the form
\begin{equation}
w = \sum_I w_I = \sum_I e^{ i \lambda \xi_I}\tilde{W}_I = \sum_I e^{ i \lambda \xi_I} (\tilde{U}_I,\tilde{V}_I), \label{eq:w}
\end{equation}
where  $w_I$ are divergence-free localized plane waves supported in $Q_{\kappa(I)}$ with phase functions $\xi_I$.  The index for $w_I$ takes the form
\begin{align*}
I=(\kappa(I),s(I)) \in  \mathbb{Z} \times (\mathbb{Z}/2^{H}\mathbb{Z})^2\times \mathbb{Z} \times \{ +,- \},
\end{align*}
where the index $\kappa(I)=(\kappa_0(I),\kappa_1(I),\kappa_2(I), \kappa_3(I)) \in \mathbb{Z} \times (\mathbb{Z}/2^{H}\mathbb{Z})^2\times \mathbb{Z}$ indicates the space-time location of $w_I$ and $s(I) \in \{ +,- \}$ specifies its oscillating direction.
The profile $\tilde{W}_I$ takes the form
\begin{equation}
\tilde{W}_I =  W_I +
\delta W_I, \label{eq:tilde-W}
\end{equation}
where $W_I$ is the main part and $\delta W_I$ is a small correction to ensure the divergence-free condition.
Set
\begin{gather}
U_{\kappa(I)} = \eta_I \psi_I a_I \f_1 = \eta_{\kappa_0(I)} \psi_{\kappa(I)} a_{\kappa(I)} \f_1, ~
V_{\kappa(I)} = \eta_I \psi_I b_I  = \eta_{\kappa_0(I)} \psi_{\kappa(I)} b_{\kappa(I)}, \label{def:U-V}\\
W_{I} = W_{\kappa(I)} = (U_{\kappa(I)},V_{\kappa(I)}) = \eta_I(t) \psi_I(t,x,y) A_I, ~ A_I = A_{\kappa(I)} = (a_{\kappa(I)}\f_1,b_{\kappa(I)}). \nonumber
\end{gather}
Here $\f_1$ is the unit $2$-vector in \eqref{eq:mbR}, and  $\eta_{\kappa_0}, \psi_\kappa$ are the partitions of unity in \eqref{def:eta_kappa_0} and \eqref{eq:psi_kappa}, respectively. The amplitude functions $a_{\kappa(I)}$ and $b_{\kappa(I)}$ are defined to be
\begin{align}
a_{\kappa(I)}  = \sqrt{\frac{(\e+S_1)(q_{\kappa(I)})}{2}}, \quad
b_{\kappa(I)}  = \frac{Y_1(q_{\kappa(I)})}{2a_{\kappa(I)}}, \label{def:a}
\end{align}
where $q_{\kappa}$ is the center of $Q_\kappa$ defined in \eqref{def:p_kappa}. Note that $Y_1(q_{\kappa(I)}) = 0$ if $q_{\kappa(I)} \not \in \supp R$.
It follows from \eqref{est-r_i}, \eqref{supp-e},  and \eqref{est-e} that  $a_I,b_I$ are well-defined  with
\begin{align*}
a_{I} & \leq  (\|\e\|_{L^{\infty}}^{1/2} + \|S_1\|_{L^{\infty}}^{1/2})/\sqrt{2} \leq C  \mcE_1^{1/2},\\
b_{I} & \leq  \|Y_1\|_{L^{\infty}} ( \inf_{\supp R} \e^{1/2})^{-1} \leq \mcE_1\mcE_1^{-1/2} =    \mcE_1^{1/2}.
\end{align*}
Thus we obtain the estimates for $A_I = (a_{\kappa(I)}\f_1,b_{\kappa(I)})$:
\begin{align}
|A_I| \leq C  \mcE_1^{1/2}.   \label{est-omega}
\end{align}
The phase function $\xi_I$ is a linear function defined as
\begin{align}
\xi_I = \xi_I(t,x) = s(I) [\kappa(I)]\f_1^\perp \cdot (x - u_{\kappa(I)} t), \label{def:xi_I}
\end{align}
where
\begin{align}
(u_I,v_I) = (u_{\kappa(I)},v_{\kappa(I)})=(u_\ell,v_\ell)(q_{\kappa}),\\
[\kappa]= [\kappa_0,\kappa_1,\kappa_2,\kappa_3]=\sum_{j=0}^3 2^{j}[\kappa_j] + 1, \quad
[\kappa_j] = \begin{cases}
0 & \text{ if } \kappa_j \text{ is even},\\
1 & \text{ if } \kappa_j \text{ is odd}.
\end{cases} \label{def:[kappa]}
\end{align}
The definition of $[k]$ ensures that $e^{i \lambda \xi_{I}}$ and $e^{i \lambda \xi_{J}}$ are separated in frequencies whenever $Q_{\kappa(I)} \cap Q_{\kappa(J)} \neq \emptyset$ and $J \neq \bar{I}$, where the conjugate index is defined by
\begin{align*}
\bar{I}=(k(I),-s(I)), \quad \forall I = (k(I),s(I)).
\end{align*}
For any such indices $I$ and $J$, one can verify  that
\begin{align}
1 \leq |\nabla_x (\xi_I + \xi_J)|  \leq [\kappa(I)]+[\kappa(J)] \leq 32. \label{ineq:xi_I-xi_J}
\end{align}
Notice that $\xi_I$ solves the following transport equation with constant coefficients:
\begin{align}
(\ptl_t + u_I \cdot \nabla_x + v_I \ptl_y )\xi_I = 0. \label{eq:xi_I}
\end{align}
Furthermore, the following orthogonality condition holds:
\[ \f_1 \cdot \nabla_x \xi_I = 0, \]
which ensures that $e^{ i \lambda \xi_I}W_I$ is divergence-free to the leading order of $\lambda$.

To find the small corrections $\delta W_I$,
we define
\begin{equation}
\begin{aligned}
w_I' &= -\Delta ( \frac{1}{\lambda^2|\nabla \xi_I|^2}e^{i\lambda\xi_I}W_I) = - \frac{1}{\lambda^2|\nabla \xi_I|^2}\Delta ( e^{i\lambda\xi_I}W_I), \\
w_I'' &= \nabla \big(\nabla \cdot (\frac{1}{\lambda^2|\nabla \xi_I|^2}e^{i\lambda\xi_I}W_I)\big) = \frac{1}{\lambda^2|\nabla \xi_I|^ 2} \nabla \big(\nabla \cdot (e^{i\lambda\xi_I}W_I)\big), \label{def:w_I}
\end{aligned}
\end{equation}
and set
\begin{align*}
w_I = w_I' + w_I''.
\end{align*}
It follows from  the definitions and \eqref{supp-eta-psi_k} that $\overline{w_I} = w_{\overline{I}}$, with
\begin{align}
\nabla \cdot w_I = 0, \quad \supp w_I \subset \supp W_I \subset  Q_{\kappa(I)}. \label{eq:w_I-div-free}
\end{align}
Therefore, the correction $w = \sum_I w_I$
is real-valued and divergence-free.
Furthermore, the following expressions hold for $w_I$:
\begin{align}
w_I &=  e^{i\lambda \xi_I}(W_I +
\delta W_I) = e^{i\lambda \xi_I}( \eta_I \psi_I A_I + \eta_I A_I  \sum_{1 \leq  |\beta| \leq 2}  C_{I,\beta}\lambda^{-|\beta|}  \ptl_{x,y}^{\beta}\psi_I ), \label{eq:w_I}
\end{align}
where $C_{I,\beta}$ are constants given by
\begin{align}
C_{I,\beta} &= \sum_{\alpha:|\alpha+\beta|=2} |\nabla_x \xi_I |^{-2} C_{\alpha,\beta}i^{|\alpha|}(\ptl_x \xi_I)^{\alpha}. \nonumber
\end{align}
Indeed, direct computations give
\begin{align*}
w_I' &= e^{i\lambda\xi_I}W_I+\frac{1}{\lambda^2|\nabla \xi_I |^2}\sum_{|\alpha+\beta|=2, |\beta| \geq 1}C_{\alpha,\beta}(\ptl_{x,y}^{\alpha}e^{i\lambda\xi_I})(\ptl_{x,y}^{\beta}W_I) \nonumber\\
&=e^{i\lambda\xi_I}W_I + |\nabla \xi_I |^{-2}\sum_{|\alpha+\beta|=2, |\beta| \geq 1} C_{\alpha,\beta}\lambda^{-2}(\ptl_{x,y}^{\alpha}e^{i\lambda\xi_I})(\ptl_{x,y}^{\beta}W_I),
\end{align*}
where
\begin{align*}
\ptl_{x,y}^{\beta} W_{I} = \eta_I(t) A_I  \ptl_{x,y}^{\beta}\psi_I(t,x,y) .
\end{align*}
Since $\xi=\xi_I(t,x)$ is linear in $t$ and $x$, so
\begin{align*}
\ptl_y (e^{i\lambda \xi_I}) = 0, \quad
\ptl_{x}^{\alpha}(e^{i\lambda \xi_I}) = i^{|\alpha|}\lambda^{|\alpha|}(\nabla_x \xi_I)^\alpha e^{i\lambda \xi_I},
\end{align*}
where $(\nabla_x \xi_I)^{\alpha}=(\ptl_{x_1} \xi_I)^{\alpha_1}(\ptl_{x_2} \xi_I)^{\alpha_2}$ for $\alpha=(\alpha_1,\alpha_2)$.
Thus
\begin{align*}
w_I' = e^{i\lambda\xi_I}\big(W_I+ |\nabla \xi_I |^{-2} \eta_I A_I  \sum_{\mathclap{|\alpha+\beta|=2, |\beta| \geq 1}}  C_{\alpha,\beta}i^{|\alpha|}(\ptl_x \xi_I)^{\alpha}\lambda^{-|\beta|}  \ptl_{x,y}^{\beta}\psi_I \big).
\end{align*}
Recalling that $\f_1\cdot \nabla_x \xi_\kappa = 0$, one gets
\begin{align*}
w_I'' & = \frac{1}{\lambda^2|\nabla \xi_I|^2}\nabla \big(\nabla \cdot (e^{i\lambda\xi_I}W_I)\big)=\frac{1}{\lambda^2|\nabla \xi_I|^2}\nabla  (e^{i\lambda\xi_I}\nabla \cdot W_I) \nonumber\\
&=e^{i\lambda \xi_I}|\nabla_x \xi_I |^{-2} \eta_I A_I  \sum_{\mathclap{|\alpha+\beta|=2, |\beta| \geq 1}} C_{\alpha,\beta}i^{|\alpha|}(\ptl_x \xi_I)^{\alpha}\lambda^{-|\beta|} \ptl_{x,y}^{\beta}\psi_I.
\end{align*}
Consequently, \eqref{eq:w_I} follows.

\subsection{The equations for the new stress}
To obtain the equation for $\tilde{R}$,  we set $R = (S,Y), (\tilde{u},\tilde{v})= (u,v) + (U,V)$ and use that $(u,v,S,Y)$ solves \eqref{eq:PrStress} to obtain
\begin{align*}
\nabla \cdot \tilde{R} =&  \ptl_t \tilde{u} + \nabla_x \cdot (\tilde{u} \otimes \tilde{u} ) + \ptl_y(\tilde{v} \tilde{u}) - \ptl_{yy}^2 \tilde{u} + \nabla_x P\\
=& \nabla_x \cdot (S + U \otimes U ) + \ptl_y (Y + V U ) + (\ptl_t U + \nabla_x \cdot (u \otimes U) + \ptl_y(v U)) \\
&+ \nabla_x \cdot (U \otimes u) + \ptl_y(V u) - \ptl_{yy}^2 U.
\end{align*}
From the expression $w = \sum_I e^{ i \lambda \xi_I} (\tilde{U}_I,\tilde{V}_I)$, one can separate the interactions of waves into the low and high frequency parts as
\begin{align*}
U \otimes U &=\sum_{I,J} e^{i \lambda (\xi_I+\xi_J)} \tilde{U}_I \otimes \tilde{U}_J  = \sum_I  \tilde{U}_I \otimes \tilde{U}_{\bar{I}} + \sum_{J \neq \bar{I}} e^{i \lambda (\xi_I+\xi_J)}\tilde{U}_I \otimes \tilde{U}_J,\\
V U &=  \sum_{I,J} e^{i \lambda (\xi_I+\xi_J)} \tilde{V}_I  \tilde{U}_J = \sum_I \tilde{V}_I \tilde{U}_{\bar{I}} + \sum_{J \neq \bar{I}}e^{i \lambda (\xi_I+\xi_J)} \tilde{V}_I  \tilde{U}_J,
\end{align*}
where $\bar{I}=(k(I),-s(I))$.
Hence,
\begin{align*}
\nabla \cdot \tilde{R}=& \nabla_x \cdot (S + \sum_I  \tilde{U}_I \otimes \tilde{U}_{\bar{I}}) + \ptl_y (Y + \sum_I \tilde{V}_I \tilde{U}_{\bar{I}}) \\
& + \sum_{J \neq \bar{I}}\nabla_x \cdot( e^{i \lambda (\xi_I+\xi_J)}\tilde{U}_I \otimes \tilde{U}_J) + \sum_{J \neq \bar{I}}\ptl_y (e^{i \lambda (\xi_I+\xi_J)} \tilde{V}_I  \tilde{U}_J) \\
& + (\ptl_t U + \nabla_x \cdot (u \otimes U) + \ptl_y(v U)) + \nabla_x \cdot (U \otimes u) + \ptl_y(V u) - \ptl_{yy}^2 U.
\end{align*}
Due to \eqref{eq:mbR}, the new stress $\tilde{R}$  can be decomposed as two parts:
\begin{align}
\tilde{R} &=  -\Big(\sum_{i\neq 1}S_{i}\f_{i}\otimes \f_{i},\sum_{i\neq 1}Y_{i}\f_{i}\Big) + \delta R, \label{def:tlR}\\
 &= -\Big(\sum_{i\neq 1}S_{i}\f_{i}\otimes \f_{i},\sum_{i\neq 1}Y_{i}\f_{i}\Big) + (R_S + R_{M}  + R_H + R_T + R_{L}).
\end{align}
Let $\e_{\ell}$ and $(S_{\ell},Y_{\ell})$ be the mollifications of $\e$ and $(S_1,Y_1)$ defined as
\begin{equation}
\begin{aligned}
e_{\ell}(t,x,y)&= \sum_{\kappa} \eta_{\kappa}^2(t)\psi_{\kappa}^2(t,x,y)e(q_{\kappa}),\\
(S_{\ell},Y_{\ell})(t,x,y)&=\sum_{\kappa} \eta_{\kappa}^2(t)\psi_{\kappa}^2(t,x,y) (S_1,Y_1)(q_{\kappa}). \label{def:R_ell}
\end{aligned}
\end{equation}
$\delta R = R_S + R_{M}  + R_H + R_T + R_{L}$ is required to solve the following divergence equations:
\begin{align}
\nabla \cdot R_S =&   \nabla \cdot \Big(-(\e_{\ell}+S_{\ell}) \f_1 \otimes \f_1 + \sum_I  \tilde{U}_I \otimes \tilde{U}_{\bar{I}},   -Y_{\ell} \f_1 + \sum_I \tilde{V}_I \tilde{U}_{\bar{I}}\Big), \label{eq:R_S}  \\
\nabla \cdot R_{M}=& \nabla \cdot  \Big( (\e_{\ell} - \e + S_\ell-S_1) \f_1 \otimes \f_1, (Y_\ell-Y_1) \f_1\Big) \nonumber \\
&  \quad + \nabla \cdot \Big( (u-u_{\ell}) \otimes U + U \otimes (u-u_{\ell}),V (u-u_{\ell})+(v-v_{\ell})U \Big), \label{eq:R_M}  \\
\nabla \cdot R_H =  & \sum_{J \neq \bar{I}}\nabla_x \cdot(e^{i \lambda (\xi_I+\xi_J)} \tilde{U}_I \otimes \tilde{U}_J) + \sum_{J \neq \bar{I}}  \ptl_y (e^{i \lambda (\xi_I+\xi_J)}  \tilde{V}_I  \tilde{U}_J), \label{eq:R_H}   \\
\nabla \cdot R_T=& \ptl_t U + \nabla_x \cdot (u_{\ell} \otimes U) + \ptl_y(v_{\ell} U) - \ptl_{yy}^2 U, \label{eq:R_T} \\
\nabla \cdot R_{L}= & \nabla_x \cdot ( U \otimes u_{\ell}) + \ptl_y (V u_{\ell}). \label{eq:R_L}
\end{align}
Here, as in \cite{IsettVicol}, the term $\e_{\ell} f_1 \otimes \f_1$ is added to ensure  the coefficient $\e_{\ell}+S_{\ell} \geq 0$.

\section{Estimates of the new velocity and stress}\label{sec:estimates}
In this section, we estimate the new velocity and stress in order to prove Lemma \ref{Lemma:Iteration}. When there is no need to retain the dependence on $B$ explicitly, as in the estimates for the derivatives of $w,\tilde{u},\tilde{v}$ and $R$, we write $C_B$ for some generic polynomial functions of $B$.  The constant $C_{\vartheta}$  in Lemma \ref{Lemma:Iteration} depending on $\vartheta$ and $C_B$  will be determined at the end of this section.
\subsection{Estimates of the new velocity} \label{subsec:est-velocity}
Now we estimate the supports of the corrections.
Recall that $\Phi_s(t,x,y) = (s+t,\Phi_s^1(t,x,y),\Phi_s^2(t,x,y),\Phi_s^3(t,x,y))$ is the flow generated by $(\ptl_t + u_\ell \cdot\nabla_x + v_{\ell} \ptl_y)$ defined in \eqref{def:barPhi_s}.
Setting
\begin{equation}
\begin{aligned}
\bar{U}=(u_{\ell},v_{\ell}), \quad (z_1,z_2,z_3)=(x_1,x_2,y), \\
\ell_1 = \ell_2 = \ell_x, \ell_3 = \ell_y, \quad  \delta_U = \mcE_U^{1/2},\quad A_1 = B^{-1}N^{-1/3}, A_2 = \frac{3}{2}, \label{set:flow-scaling}
\end{aligned}
\end{equation}
it follows from Lemma \ref{Lemma:flow-scaling} and  \eqref{est-D_x-u_ell}  that for $ |s| \leq \tau, (x,y) \in \tilde{Q}_{\tilde{\kappa}(I)}$,
\begin{align}
|\Phi_s^3(t_I,x,y) - y_I| &\leq   |\Phi_s^3(t_I,x,y) - \Phi_s^3(t_I,x_I,y_I)|  + |\Phi_s^3(t_I,x_I,y_I) - \Phi_0^3(t_I,x_I,y_I)| \nonumber \\
&\leq A_2 e^{A_1} \ell_y + s \max|\frac{d}{ds}\Phi_s^3| \leq 3\ell_y + \tau \|v\|_{L^{\infty}} \leq 4 \ell_y, \label{def:A}
\end{align}
where we have used \eqref{eq:ell}, \eqref{ineq:tau-ell} for the last inequality and denoted $(t_I,x_I,y_I)=q_{\kappa(I)}$ for $q_{\kappa(I)}$  in  \eqref{def:p_kappa}.
It follows from the definition of $Q_{\kappa}$ in \eqref{def:p_kappa} that
\begin{align}
P_{t,y}(Q_{\kappa}) \subset N(\{(t_{\kappa},y_{\kappa})\};\tau, 4 \ell_y). \label{supp-Q_kappa}
\end{align}
Recalling from \eqref{supp-eta-psi_k} that $\supp \eta_{\kappa_0}\psi_{\kappa} \subset Q_{\kappa}$, so one can get
\begin{align}
\text{supp}_{t,y} (\eta_{\kappa_0}^2(\cdot) \psi_{\kappa}^2(\cdot) (S_1,Y_1)(q_{\kappa})) \subset   N(\text{supp}_{t,y} R;\tau, 4\ell_y). \label{supp-S_I}
\end{align}
It follows from  the definitions \eqref{eq:ell}, \eqref{def:ell} and \eqref{def:R_ell} that
\begin{align}
\text{supp}_{t,y} (S_{\ell},Y_{\ell}) \subset N(\text{supp}_{t,y} R;\tau, 4\ell_y) \subset N(\text{supp}_{t,y} R;\frac{1}{2}\ell^2,\frac{1}{2}\ell), \label{supp-S_ell-Y_ell}
\end{align}
if $B$ is chosen  so that  $B \geq 8$. Similarly, it holds that
\begin{align}
\text{supp}_{t,y} e_{\ell} \subset N(\supp \e;\tau,4\ell_y) \subset N(\supp \e;\frac{1}{2}\ell^2,\frac{1}{2}\ell).  \label{supp-e_ell}
\end{align}
It follows from \eqref{supp-e} and \eqref{def:U-V} that
\begin{equation}
\text{supp}_{t,y} w   \subset   \bigcup_{\kappa} \supp_{t,y}( \eta_{\kappa_0}^2(\cdot)\psi_{\kappa}^2(\cdot) (S_1+\e)(q_{\kappa})) \subset  N(\supp \e;\frac{1}{2}\ell^2,\frac{1}{2}\ell). \label{supp-w}
\end{equation}
Due to \eqref{def:mfD} and \eqref{supp-Q_kappa},  $P_{t,y}(Q_{\kappa}) \cap N(\supp \e;\frac{1}{2}\ell^2,\frac{1}{2}\ell) = \emptyset$, for any $Q_{\kappa} \nsubseteq \mfD$. Thus it follows from \eqref{supp-w} that
\begin{align}
w_I \equiv 0  \quad \text{ for any } Q_{\kappa(I)} \nsubseteq \mfD. \label{supp-w_I-Q_I}
\end{align}
It is easy to see from \eqref{eq:psi_kappa} that the number of non-zero $\eta_I\psi_I$  is at most $2^4=64$ at any point. Hence to estimate $w = \sum_I w_I$, it suffices to estimate each \ $w_I$ for any $Q_{\kappa(I)} \subset \mfD$.

Recalling the expression \eqref{eq:w_I},
it follows from \eqref{est-psi} and  \eqref{est-omega} that
\begin{align}
	\|W_I\|_{C^0} &\lesssim \mcE_1^{1/2}, \label{est-W_I}\\
	\|\delta W_I\|_{C^0} &\lesssim \sum_{1 \leq  \beta + \beta' \leq 2} \lambda^{-(\beta+\beta')} \ell_x^{-\beta} \ell_y^{-\beta'} \mcE_1^{1/2} \lesssim \sum_{1 \leq  |\beta| \leq 2} (\lambda\ell_x )^{-|\beta|} \mcE_1^{1/2} \nonumber \\
	&  \lesssim B^{-2}N^{-2/3}\mcE_1^{1/2}, \label{est-deltaW_I}
	\end{align}
where one has used the fact $\ell_x \leq \ell_y$ from \eqref{eq:ell}.
Thus the correction $w$ is bounded by
\begin{align}
\|w\|_{C^0} \lesssim \|W_I+\delta W_I\|_{C^0} \leq C \mcE_1^{1/2}. \label{est-w-C^0}
\end{align}
Due to \eqref{eq:w_I}, one has
\begin{align*}
\ptl_{x,y}^\alpha \tilde{W}_I &=  \eta_I  A_I \ptl_{x,y}^\alpha \psi_I + \eta_I A_I  \sum_{1 \leq  |\beta| \leq 2}  C_{I,\beta}\lambda^{-|\beta|}  \ptl_{x,y}^{\alpha+\beta}\psi_I.
\end{align*}
It follows from \eqref{est-psi} and \eqref{est-omega} that for $|\alpha|, \beta \geq 0$,
\begin{align}
\|\nabla_{x}^\alpha \ptl_y^\beta  \tilde{W}_I  \|_{C^0} &\leq C_{\alpha,\beta}  \ell_x^{-\alpha} \ell_y^{-\beta}\mcE_1^{1/2}, \label{est-tilde-W}\\
\|\nabla_{x}^\alpha \ptl_y^\beta \delta W_I  \|_{C^0} &\leq C_{\alpha,\beta}  \ell_x^{-\alpha} \ell_y^{-\beta}  B^{-2}N^{-2/3}\mcE_1^{1/2}.\label{est-deltaW_I-d}
\end{align}
For the derivatives of  $\tilde{W}_I$ involving $\barDt$,
it follows from the expression \eqref{eq:w_I} and the estimates \eqref{est-eta_k_0}, \eqref{Dt-psi} and \eqref{est-omega} that for $0 \leq \gamma+\gamma' \leq 2$,
\begin{align}
\|(\barDt)^{\gamma}\nabla_x^{\alpha} \ptl_y^{\beta} (\barDt)^{\gamma'}\nabla_x^{\alpha'} \ptl_y^{\beta'} \tilde{W}_I \|_{L^{\infty}(Q_I)} &\lesssim  \ell_x^{-|\alpha+\alpha'|}\ell_y^{-(\beta+\beta')}\tau^{-(\gamma+\gamma')}\mcE_1^{1/2}, \label{est-W_I-Dt} \\
\|(\barDt)^{\gamma}\nabla_x^{\alpha} \ptl_y^{\beta} (\barDt)^{\gamma'}\nabla_x^{\alpha'} \ptl_y^{\beta'} \delta \tilde{W}_I \|_{L^{\infty}(Q_I)} &\lesssim  \ell_x^{-|\alpha+\alpha'|}\ell_y^{-(\beta+\beta')}\tau^{-(\gamma+\gamma')}B^{-2}N^{-2/3}\mcE_1^{1/2}.\label{est-deltaW_I-Dt}
\end{align}

Now we estimate the derivatives of $w$ and $(\tilde{u},\tilde{v})$.
It follows from \eqref{est-tilde-W} that
\begin{align}
\|\nabla_x w\|_{C^0} &\lesssim \|i \lambda (\nabla_x \xi_I) e^{i\lambda \xi_I} \tilde{W}_I + e^{i\lambda \xi_I} \nabla_x \tilde{W}_I \|_{C^0} \lesssim \lambda \mcE_1^{1/2} \nonumber \\
&\leq C_B N \Xi \mcE_1^{1/2}. \label{ineq:ptl_x-w}
\end{align}
It thus follows from \eqref{est-u} and \eqref{ineq:N-Cor}  that
\begin{align}
\|\nabla_x (\tilde{u},\tilde{v})\|_{C^0} &\leq \|\nabla_x (u,v) \|_{C^0}  +  \|\nabla_x w\|_{C^0} \nonumber  \leq \Xi \mcE_u^{1/2} +  C_B N \Xi \mcE_1^{1/2} \\
&\leq C_B N \Xi \mcE_1^{1/2}.\label{ineq:ptl_x-tilde-u}
\end{align}
Similarly, \eqref{est-u}, \eqref{ineq:N-Cor} and \eqref{est-tilde-W} imply that
\begin{align}
\|\ptl_y w\|_{C^0} &\lesssim \|e^{i\lambda \xi_I} \ptl_y \tilde{W}_I \|_{C^0}  \lesssim \ell_y^{-1} \mcE_1^{1/2} \leq C_B N^{1/3}\Xi^{1/2}\mcE_u^{1/4}\mcE_1^{1/2} \nonumber \\
& \leq C_B N^{1/2}\Xi^{1/2}\mcE_1^{3/4}, \label{ineq:ptl_y-w}\\
\|\ptl_y (\tilde{u},\tilde{v})\|_{C^0} &\leq \|\ptl_y  (u,v)\|_{C^0}  +  \|\ptl_y  w\|_{C^0} \leq \Xi^{1/2} \mcE_u^{3/4} + C_B N^{1/2}\Xi^{1/2}\mcE_1^{3/4} \nonumber \\
& \leq C_B N^{1/2}\Xi^{1/2}\mcE_1^{3/4}. \label{ineq:ptl_y-tilde-u}
\end{align}
Similar estimates yield also
\begin{align*}
\|\ptl_{yy}^2 w\|_{C^0} &\lesssim \|e^{i\lambda \xi_I} \ptl_{yy}^2 \tilde{W}_I \|_{C^0}  \lesssim \ell_y^{-2} \mcE_1^{1/2} \leq C_B N^{2/3}\Xi\mcE_u^{1/2}\mcE_1^{1/2}  \leq C_B N \Xi \mcE_1,\\
\|\ptl_{yy}^2 (\tilde{u},\tilde{v}) \|_{C^0} &\leq \|\ptl_{yy}^2  (u,v)\|_{C^0}  +  \|\ptl_{yy}^2  w\|_{C^0} \leq \Xi \mcE_u + C_B N \Xi \mcE_1 \leq C_B N \Xi \mcE_1.
\end{align*}
To estimate the material derivatives, one will use the following lemma.
\begin{Lemma}\label{Lemma:mol-errors}
	For any $Q_{\kappa} \subset \mfD$, it holds that
	\begin{align}
	\|(u_{\ell},v_{\ell})(\cdot) - (u_{\ell},v_{\ell})(q_{\kappa})\|_{L^{\infty}(Q_{\kappa})} & \leq C B^{-1}N^{-1/3}\mcE_u^{1/2}, \label{ineq:u_ell-u_I}\\
	\|(e(\cdot)-e(q_{\kappa}),S_1(\cdot)-S_1(q_{\kappa}),Y_1(\cdot)-Y_1(q_{\kappa}))\|_{L^{\infty}(Q_{\kappa})} & \leq C B^{-1}N^{-1/3}\mcE_1. \label{ineq:r_1-r_ell}
	\end{align}
\end{Lemma}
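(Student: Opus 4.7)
The plan is to reduce both estimates to composing two Lipschitz-type bounds along the coarse flow $\Phi_s$ from \eqref{def:barPhi_s}. A generic point in $Q_\kappa$ has the form $\Phi_s(\kappa_0\tau,x,y)$ with $|s|\le \tfrac{3}{4}\tau$ and $(x,y)\in \tilde Q_{\tilde\kappa}$; writing $q_\kappa=(\kappa_0\tau,x_\kappa,y_\kappa)$, I would split any difference $g(\Phi_s(\kappa_0\tau,x,y))-g(q_\kappa)$ as
\begin{align*}
[g(\Phi_s(\kappa_0\tau,x,y))-g(\Phi_s(\kappa_0\tau,x_\kappa,y_\kappa))] + [g(\Phi_s(\kappa_0\tau,x_\kappa,y_\kappa))-g(q_\kappa)],
\end{align*}
estimating the first bracket by the spatial derivatives of $g$ combined with Lemma \ref{Lemma:flow-scaling} (using the bounds \eqref{est-D_x-u_ell} for $\bar U=(u_\ell,v_\ell)$), and the second bracket by integrating $\barDt g$ along $\Phi_s$ for $|s|\le\tau$. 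Note that by \eqref{def:ell} the condition $\tau\le\delta_U^{-1}\min_i\ell_i = \ell_x\mcE_u^{-1/2}$ of Lemma \ref{Lemma:flow-scaling} holds with room to spare, so the flow trajectories remain within $O(\ell_x,\ell_x,\ell_y)$ of each other.

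For \eqref{ineq:u_ell-u_I}, the spatial part uses $\ell_x\|\nabla_x(u_\ell,v_\ell)\|+\ell_y\|\ptl_y(u_\ell,v_\ell)\|\lesssim \ell_x\,\Xi\mcE_u^{1/2}+\ell_y\,\Xi^{1/2}\mcE_u^{3/4}\lesssim B^{-1}N^{-1/3}\mcE_u^{1/2}$ by the choice of $\ell_x,\ell_y$ in \eqref{def:ell}. The time part uses $\tau\,\|\barDt(u_\ell,v_\ell)\|_{L^\infty(\mfD)}\lesssim \tau\,\Xi\mcE_u = B^{-1}N^{-1/3}\mcE_u^{1/2}$ by \eqref{est-barDtDxDy} with $\alpha=\beta=0$. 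Adding the two gives the claim.

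For \eqref{ineq:r_1-r_ell}, the same decomposition applies to $(S_1,Y_1)$. The spatial part uses \eqref{est-r_i} in the form $\|\nabla_x(S_1,Y_1)\|\le\Xi\mcE_1$ and $\|\ptl_y(S_1,Y_1)\|\le\Xi^{1/2}\mcE_u^{1/4}\mcE_1$, which combined with $\ell_x,\ell_y$ yield $CB^{-1}N^{-1/3}\mcE_1$. The flow part requires converting the natural transport derivative $(\ptl_t+u\cdot\nabla_x+v\ptl_y)$ of \eqref{est-r_i} into $\barDt$ via
\begin{align*}
\barDt(S_1,Y_1) = (\ptl_t+u\cdot\nabla_x+v\ptl_y)(S_1,Y_1) + (u_\ell-u)\cdot\nabla_x(S_1,Y_1) + (v_\ell-v)\ptl_y(S_1,Y_1);
\end{align*}
invoking \eqref{est-r_i} and the mollification error \eqref{ineq:u-u_ell} gives $\|\barDt(S_1,Y_1)\|_{L^\infty(\mfD)}\lesssim \Xi\mcE_u^{1/2}\mcE_1$, and the time integral along $\Phi_s$ contributes $\tau\cdot\Xi\mcE_u^{1/2}\mcE_1 = B^{-1}N^{-1/3}\mcE_1$. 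For $e=e(t,y)$, since $e$ has no $x$-dependence the spatial part only involves $\ptl_y e^{1/2}$, and the flow part only involves $(\ptl_t+v_\ell\ptl_y)e^{1/2}=(\ptl_t+v\ptl_y)e^{1/2}+(v_\ell-v)\ptl_y e^{1/2}$; hypothesis \eqref{est-e} (noting $\ell^{-2}=\Xi\mcE_u^{1/2}$ and $\ell^{-1}=\Xi^{1/2}\mcE_u^{1/4}$) together with \eqref{ineq:u-u_ell} and $\mcE_u\le\Xi^2$ from \eqref{eq:ell} then give $|e^{1/2}-e^{1/2}(q_\kappa)|\lesssim B^{-1}N^{-1/3}\mcE_1^{1/2}$ on $Q_\kappa$. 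The final step is the identity $e-e(q_\kappa)=(e^{1/2}+e^{1/2}(q_\kappa))(e^{1/2}-e^{1/2}(q_\kappa))$ combined with $\|e^{1/2}\|_{L^\infty}\lesssim\mcE_1^{1/2}$ from \eqref{est-e}, upgrading the $e^{1/2}$ estimate to the stated $\mcE_1$-estimate on $e$.

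The only delicate point is the bookkeeping for $e$: the hypothesis is phrased in terms of $e^{1/2}$, which may have unbounded derivatives at the zero set of $e$, so I must avoid differentiating $e$ directly and instead work with $e^{1/2}$ throughout, upgrading at the very end via the algebraic identity. Once that is noted, each bound reduces to checking that $\tau$ matches $B^{-1}N^{-1/3}\mcE_u^{-1/2}$ and that $\ell_x,\ell_y$ match the anisotropic scales used in \eqref{est-u} and \eqref{est-r_i}, a straightforward substitution.
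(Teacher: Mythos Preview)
Your argument is correct and follows the same flow-plus-spatial strategy as the paper, with two minor variations worth noting. First, your splitting compares the flowed generic point to the \emph{flowed} center $\Phi_s(t_\kappa,x_\kappa,y_\kappa)$, which forces you to invoke Lemma~\ref{Lemma:flow-scaling} to keep the spatial separation under control; the paper instead compares the flowed generic point to the \emph{unflowed} generic point $(t_\kappa,x,y)$ (the pure $\barDt$-integral) and then takes the spatial difference at the fixed time $t_\kappa$, which avoids Lemma~\ref{Lemma:flow-scaling} altogether and is slightly cleaner. Second, for $e$ the paper simply uses the chain rule $\ptl e = 2e^{1/2}\,\ptl e^{1/2}$ to pass directly from the hypotheses on $e^{1/2}$ to bounds on $\nabla e$ and $\barDt e$, then applies the same mean-value argument as for $(S_1,Y_1)$; your concern about the zero set is only an issue when going from $e$ to $e^{1/2}$, not the reverse, so the paper's route is safe and a touch shorter than your upgrade-at-the-end manoeuvre. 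Either way the bounds come out identical.
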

\begin{proof}
	Denote $(t_{\kappa},x_{\kappa},y_{\kappa})=q_{\kappa}$ for $q_{\kappa}$ in  \eqref{def:p_kappa}. Recall that $\Phi_s$ is the flow generated by $(\ptl_t + u_\ell \cdot\nabla_x + v_{\ell} \ptl_y)$ in \eqref{def:barPhi_s}. It follows from \eqref{est-D_x-u_ell},  \eqref{est-D_t-u_ell} and the mean value theorem that,
	for $(t_{\kappa},x,y) \in \tilde{Q}_{\tilde{\kappa}}, |s| \leq \tau$,
	\begin{align*}
	&|(u_{\ell},v_{\ell})(\Phi_s(t_{\kappa},x,y)) - (u_{\ell},v_{\ell})(t_{\kappa},x_{\kappa},y_{\kappa})|\nonumber\\
	=&|(u_{\ell},v_{\ell})(\Phi_s(t_{\kappa},x,y)) - (u_{\ell},v_{\ell})(t_{\kappa},x,y)| +  |(u_{\ell},v_{\ell})(t_{\kappa},x,y)-(u_{\ell},v_{\ell})(t_{\kappa},x_{\kappa},y_{\kappa})| \\
	\leq&  \ell_x \|\nabla_{x} (u_{\ell},v_{\ell})\| + \ell_y \| \ptl_y (u_{\ell},v_{\ell}) \| +  \tau \| \barDt (u_{\ell},v_{\ell}) \| \leq C B^{-1}N^{-1/3}\mcE_u^{1/2}.
	\end{align*}
	Since $Q_{\kappa}=\{\Phi_s(t_{\kappa},x,y):(t_{\kappa},x,y) \in \tilde{Q}_{\tilde{\kappa}}, |s| \leq \tau\}$, so  \eqref{ineq:u_ell-u_I} follows.
	
	Since $\supp (e,S_1,Y_1) \subset \supp e \subset \mfD$ due to \eqref{supp-e}, it follows from the estimates \eqref{est-r_i}, \eqref{eq:ell}, \eqref{est-e},  and \eqref{ineq:u-u_ell}  that
	\begin{align}
	\| \barDt (e,S_1,Y_1)\|_{L^{\infty}} \leq & \| (\ptl_t + u \cdot \nabla_x + v \ptl_y) (e,S_1,Y_1) \| + \|u - u_{\ell}\|\|\nabla_x (e,S_1,Y_1)\|\nonumber \\
	&\quad + \|v - v_{\ell}\|\|\ptl_y (e,S_1,Y_1)\|\nonumber\\
	\leq & \Xi \mcE_u^{1/2} \mcE_1 + B^{-1}N^{-1/3}\mcE_u^{1/2} \mcE_1 (\Xi  + \Xi^{1/2} \mcE_u^{1/4}) \nonumber \\
	\leq & C \Xi \mcE_u^{1/2} \mcE_1. \label{est-r_1-Dt}
	\end{align}
	Using the same proof for \eqref{ineq:u_ell-u_I} with \eqref{est-r_i} and \eqref{est-e}, one obtains  \eqref{ineq:r_1-r_ell}.
\end{proof}

It follows from \eqref{eq:xi_I} and $\ptl_y \xi_I(x,t) = 0$ that
\[ \barDt \xi_I = (u_{\ell}-u_I) \cdot \nabla_x \xi_I.\]
Hence,
\begin{align}
&\barDt(e^{i\lambda \xi_I}\tilde{W}_I)
=i \lambda e^{i\lambda \xi_I}\tilde{W}_I  (u_{\ell}-u_I) \cdot \nabla_x \xi_I + e^{i\lambda \xi_I}\barDt \tilde{W}_I. \label{eq:Dt-W_I}
\end{align}
It follows from  \eqref{ineq:N-Cor} , \eqref{est-W_I-Dt} and \eqref{ineq:u_ell-u_I} that
\begin{align}
\|\barDt w\|_{C^0} &\lesssim \sup_{I: Q_{\kappa(I)} \subset \mfD} \|\barDt(e^{i\lambda \xi_I}\tilde{W}_I)\|_{L^{\infty}}  \lesssim \ \lambda \|\tilde{W}_I \|\|u_{\ell}-u_I\|_{L^{\infty}(Q_{\kappa})} + \|\barDt \tilde{W}_I\| \nonumber \\
& \leq C_B (N^{2/3}\Xi \mcE_U^{1/2} \mcE_1^{1/2} + N^{1/3}\Xi \mcE_U^{1/2} \mcE_1^{1/2}) \leq C_B N \Xi \mcE_1. \label{est-w-Dt}
\end{align}
Writing $\ptl_t + \tilde{u} \cdot \nabla_x + \tilde{v} \ptl_y = \barDt + (u-u_{\ell})\cdot \nabla_x + (v-v_{\ell})\ptl_y + w \cdot \nabla$, one obtains from  the  estimates \eqref{eq:ell}, \eqref{ineq:u-u_ell},  \eqref{est-w-C^0}, \eqref{ineq:ptl_x-w}, \eqref{ineq:ptl_y-w}, and \eqref{est-w-Dt} that
\begin{align*}
\|(\ptl_t + \tilde{u} \cdot \nabla_x + \tilde{v} \ptl_y ) w\| &\leq \|\barDt w\| + \|u-u_{\ell}\|\|\nabla_x w\| + \|v-v_{\ell}\|\|\ptl_y w\| + \|w\|\|\nabla w\| \\
&\leq C_B N \Xi \mcE_1.
\end{align*}
Since $\ptl_t + \tilde{u} \cdot \nabla_x + \tilde{v} \ptl_y  = (\ptl_t + u \cdot \nabla_x +v \ptl_y ) + w \cdot \nabla$, it follows from \eqref{est-u} and \eqref{est-w-C^0} that
\begin{align*}
\|(\ptl_t + \tilde{u} \cdot \nabla_x + \tilde{v} \ptl_y) (\tilde{u},\tilde{v})\| &\leq \|(\ptl_t + u \cdot \nabla_x + v \ptl_y )(u,v)\| + \|w\| \|\nabla (u,v)\| \\
&\quad + \|(\ptl_t + \tilde{u} \cdot \nabla_x + \tilde{v} \ptl_y) w\| \\
&\leq  C_B N \Xi \mcE_1.
\end{align*}
This completes the estimates for $w$ and $(\tilde{u},\tilde{v})$.

\subsection{Estimates of the errors}
Recall that the new stress $\tilde{R}$ is given by
\begin{align*}
\tilde{R} &=  -\Big(\sum_{i\neq 1}S_{i}\f_{i}\otimes \f_{i},\sum_{i\neq 1}Y_{i}\f_{i}\Big) + \delta R,
\end{align*}
and we decompose $\delta R = (R_S + R_{M}) +  ( R_H + R_T + R_{L})$
 into two parts. The first part  will be estimated below, and the second part will be handled by solving divergence equations with oscillatory sources in Section \ref{subsec:NewStressDivEq}.
\subsubsection{Mollification errors} \label{sec:moll-error}

Recall that
\begin{equation}
\begin{aligned}
R_M  &=  \Big( (e_{\ell} - e + S_\ell-S_1) \f_1 \otimes \f_1, (Y_\ell-Y_1) \f_1\Big)\\
& \quad  \quad + \Big((u-u_{\ell}) \otimes U + U \otimes (u-u_{\ell}),V (u-u_{\ell})+(v-v_{\ell})U\Big)  \\
&:= R_M' + R_M''. \label{def:mbR_M}
\end{aligned}
\end{equation}
The supports of $R_M$ can be estimate from \eqref{supp-S_ell-Y_ell}, \eqref{supp-e_ell} and \eqref{supp-w} as
\begin{align*}
\supp_{t,y} R_M &\subset \supp_{t,y} w \cup \supp \e_{\ell} \cup \supp \e \cup \text{supp}_{t,y} (S_1, Y_1)\cup \text{supp}_{t,y} (S_{\ell}, Y_{\ell}) \\\
&\subset  N(\supp \e;\frac{1}{2}\ell^2,\frac{1}{2}\ell).
\end{align*}
Then  \eqref{ineq:r_1-r_ell} implies that
\begin{align*}
\|R_M'\|_{C^0} &= \|(\sum_{\kappa}\eta_{\kappa}^2\psi_{\kappa}^2(e(q_{\kappa})-e(\cdot)+S_1(q_{\kappa})-S_1(\cdot)),\sum_{\kappa}\eta_{\kappa}^2\psi_{\kappa}^2(Y_1(q_{\kappa})-Y_1(\cdot))\|\\
&\lesssim \sup_{\kappa: Q_{\kappa} \subset \mfD} \|(e(\cdot)-e(q_{\kappa}),S_1(\cdot)-S_1(q_{\kappa}),Y_1(\cdot)-Y_1(q_{\kappa}))\|_{C^0(Q_{\kappa})} \\
&\lesssim B^{-1}N^{-1/3} \mcE_1.
\end{align*}
While \eqref{ineq:u-u_ell} and \eqref{est-w-C^0} lead to
\begin{align*}
\|R_M''\|_{C^0}
&\lesssim \|u-u_{\ell}\|_{L^{\infty}(\mfD)}\|U\| + \|V\| \|u-u_{\ell}\|_{L^{\infty}(\mfD)} + \|v-v_{\ell}\|_{L^{\infty}(\mfD)} \|U\|\\
&\lesssim B^{-1} N^{-1/3}\mcE_u^{1/2}\mcE_1^{1/2}.
\end{align*}
Therefore,
\begin{align}
\|R_M\|_{C^0}
& \leq C B^{-1} N^{-1/3}\mcE_u^{1/2}\mcE_1^{1/2}. \label{est-R_M-C^0}
\end{align}

Now we estimate the derivatives of $R_M$.
It follows from \eqref{est-r_i}, \eqref{est-e}, \eqref{est-psi} and \eqref{ineq:r_1-r_ell} that
\begin{align*}
\|\ptl_y R_M'\|
&\lesssim  \sup_{\kappa: Q_{\kappa} \subset \mfD} \|\ptl_y \psi_{\kappa}\|_{C^0(Q_{\kappa})} \|(e(\cdot)-e(q_{\kappa}),S_1(\cdot)-S_1(q_{\kappa}),Y_1(\cdot)-Y_1(q_{\kappa}))\|_{C^0(Q_{\kappa})} \\
&\quad + \|(\ptl_y e, \ptl_y S_1, \ptl_y Y_1) \| \\
& \leq  C_B \Xi^{1/2}\mcE_u^{1/4}\mcE_1 \leq C_B N^{1/6}\Xi^{1/2}\mcE_u^{1/2}\mcE_1^{3/4}.
\end{align*}
Due to \eqref{est-u},  \eqref{ineq:N-Cor}, \eqref{ineq:u-u_ell}, \eqref{est-D_x-u_ell}, \eqref{est-w-C^0} and \eqref{ineq:ptl_y-w}, one gets
\begin{align*}
\|\ptl_y R_M''\| &\lesssim \|(u-u_{\ell},v-v_{\ell})\|_{L^{\infty}(\mfD)} \|\ptl_y (U,V) \| + \|\ptl_y (u-u_{\ell},v-v_{\ell})\|_{L^{\infty}(\mfD)} \|(U,V)\|\\
&\leq C_B \Xi^{1/2}\mcE_u^{3/4}\mcE_1^{1/2} \leq C_B N^{1/6}\Xi^{1/2}\mcE_u^{1/2}\mcE_1^{3/4}.
\end{align*}
Hence,
\begin{align*}
\|\ptl_y R_M \|_{C^0} &\leq C_B N^{1/6}\Xi^{1/2}\mcE_u^{1/2}\mcE_1^{3/4} \leq C_B \tilde{\Xi}^{1/2} \tilde{\mcE}_u^{1/4}\tilde{\mcE}_3.
\end{align*}
In the same manner, it follows from \eqref{est-u}, \eqref{est-r_i}, \eqref{ineq:u-u_ell}, \eqref{est-D_x-u_ell}, \eqref{est-psi}, \eqref{est-w-C^0}, \eqref{ineq:ptl_x-w}, and \eqref{ineq:r_1-r_ell} that
\begin{align*}
\|\nabla_x R_M \|_{C^0} &\leq C_B N^{2/3}\Xi\mcE_u^{1/2}\mcE_1^{1/2} \leq C_B \tilde{\Xi} \tilde{\mcE}_3.
\end{align*}
Since $\barDt\psi_{\kappa} = 0$, it follows from \eqref{est-eta_k_0}, \eqref{ineq:r_1-r_ell} and \eqref{est-r_1-Dt} that
\begin{align*}
\|\barDt R_M'\| &\lesssim \sup_{\kappa} \|\ptl_t \eta_{\kappa} \| \|(e(\cdot)-e(q_{\kappa}),S_1(\cdot)-S_1(q_{\kappa}),Y_1(\cdot)-Y_1(q_{\kappa}))\|_{C^0(Q_{\kappa})} \\
&\quad + \|\barDt (e,S_1,Y_1) \| \lesssim  \Xi \mcE_u^{1/2} \mcE_1.
\end{align*}
As a consequence of \eqref{ineq:u-u_ell}, \eqref{est-u-Dt},\eqref{est-barDtDxDy}, \eqref{est-D_t-u_ell}, \eqref{est-w-C^0} and \eqref{est-w-Dt}, it holds that
\begin{align*}
\|\barDt R_M''\| &\lesssim \|(u-u_{\ell},v-v_{\ell})\|\|\barDt (U,V) \| + \|\barDt (u-u_{\ell},v-v_{\ell})\|\|(U,V)\| \\
& \leq C_B N^{2/3}\Xi \mcE_u^{1/2} \mcE_1.
\end{align*}
Thus,
\begin{align*}
\|\barDt R_M \|_{C^0} &\leq C_B N^{2/3}\Xi \mcE_u^{1/2} \mcE_1 \leq C_B \tilde{\Xi} \tilde{\mcE}_u^{1/2}\tilde{\mcE}_3.
\end{align*}

\subsubsection{The stress term} \label{sec:stress-term}

Note that
\begin{align*}
\sum_I (\tilde{U}_I \otimes \tilde{U}_{\bar{I}} + \tilde{V}_I \tilde{U}_{\bar{I}}) = 2\sum_{\kappa}\Big( (U_{\kappa}+\delta U_{\kappa})\otimes (U_{\kappa}+\delta U_{\kappa}) +  (V_{\kappa} + \delta V_{\kappa}) (U_{\kappa}+\delta U_{\kappa})\Big).
\end{align*}
It follows from the definitions of $R_S$ in \eqref{eq:R_S} that
\begin{align*}
R_S =& \Big(-(\e_{\ell}+S_{\ell}) \f_1 \otimes \f_1 + 2\sum_{\kappa}  U_{\kappa} \otimes U_{\kappa},  -Y_{\ell} \f_1 + 2\sum_{\kappa} V_{\kappa} U_{\kappa}\Big) +  R_{S'},\\
R_{S'}=&2\Big(\sum_{\kappa}\delta U_{\kappa} \otimes U_{\kappa} +
 U_{\kappa} \otimes \delta U_{\kappa} + \delta U_{\kappa} \otimes\delta U_{\kappa},0\Big)\\
&+2\Big(0,\sum_{\kappa} \delta V_{\kappa} U_{\kappa} + V_{\kappa} \delta U_{\kappa} + \delta V_{\kappa} \delta U_{\kappa}\Big).
\end{align*}
The constructions in \eqref{def:a}, \eqref{def:R_ell}, and \eqref{supp-w} yield
\begin{align*}
-(\e_{\ell}+S_{\ell})\f_1\otimes \f_1+ \sum_{\kappa} 2 U_{\kappa}\otimes U_{\kappa}
&= \sum_{\kappa}\eta_{\kappa_0}^2\psi_{\kappa}^2(-(\e+S_{1})(q_{\kappa})+2a_{\kappa}^2)(\f_1\otimes \f_1) = 0, \\
-Y_{\ell} \f_1 + 2\sum_{\kappa} V_{\kappa} U_{\kappa}&= \sum_{\kappa}\eta_{\kappa_0}^2\psi_{\kappa}^2(-Y_1(q_{\kappa})+2a_{\kappa}b_{\kappa})\f_1 = 0.
\end{align*}
Thus
\begin{equation}
R_S = R_{S'}. \label{eq:R_S'}
\end{equation}
Then \eqref{est-W_I} and \eqref{est-deltaW_I} imply  that
\begin{align}
\|R_S\|_{C^0}=\|R_S'\|_{C^0} &\lesssim \|W_I\|\|\delta W_I\| + \|\delta W_I\|\|\delta W_I\| \lesssim B^{-2}N^{-2/3}\mcE_1.
\end{align}
Now we estimate derivatives of $R_{S}$.
It follows from \eqref{ineq:N-Cor},  \eqref{est-tilde-W}, and \eqref{est-deltaW_I-d} that
\begin{align*}
\|\ptl_y R_S \|_{C^0} &= \|\ptl_y R_S' \|_{C^0} \lesssim \|\ptl_y W_I \| \|\delta W_I \| + \| W_I\| \|\ptl_y \delta W_I \| + \|\ptl_y \delta W_I \| \|\delta W_I \|\\
&  \lesssim N^{-1/3}\Xi^{1/2} \mcE_u^{1/4} \mcE_1  \leq C_B \tilde{\Xi}^{1/2} \tilde{\mcE}_u^{1/4}\tilde{\mcE}_3,\\
\|\nabla_x R_S \|_{C^0} &\lesssim \|\nabla_x W_I \| \|\delta W_I \| + \| W_I\| \|\nabla_x \delta W_I \| + \|\nabla_x \delta W_I \| \|\delta W_I \|\\
& \lesssim  N^{-1/3}\Xi \mcE_1 \leq C_B \tilde{\Xi} \tilde{\mcE}_3.
\end{align*}
Similarly, making use of \eqref{ineq:N-Cor}, \eqref{est-W_I-Dt}, and \eqref{est-deltaW_I-Dt}, one can get
\begin{align*}
\|\barDt R_S \|_{C^0} &\lesssim \|\barDt W_I \| \|\delta W_I \| + \| W_I\| \|\barDt \delta W_I \| + \|\barDt \delta W_I \| \|\delta W_I \|\\
& \lesssim \Xi N^{-1/3} \mcE_u^{1/2} \mcE_1 \leq C_B \tilde{\Xi} \tilde{\mcE}_u^{1/2}\tilde{\mcE}_3.
\end{align*}

\subsection{The new stress from solving divergence equations} \label{subsec:NewStressDivEq}
The part of the new stress consisting of $R_H + R_T + R_{L}$ will be estimated by solving divergence equations of the form
\begin{align*}
\nabla \cdot R_I = e^{i \lambda \xi_I}h_I,
\end{align*}
with $h_I \in C_c^{\infty}(Q_I)$ compactly supported in $Q_I=Q_{\kappa(I)}$. To this end, we adapt the method in \cite{IsettOh16} to solve partially symmetric divergence equations with compactly supported sources. Heuristically one can obtain a solution $R_I \in C_c^{\infty}(\hat{Q}_I)$ such that $\|R_I\| \sim \lambda^{-1}\|h_I\|$ with a slightly enlarged support $\hat{Q}_I$. The precise statements are contained in the following lemma.

Given a smooth vector field $\bar{U}=(\bar{U}^1,\cdots,\bar{U}^d)(t,z^1,\cdots,z^d)$, a set of positive numbers $\bar{\tau}, \bar{\ell}_1, \cdots, \bar{\ell}_d$, and a point $(t_0,z_0) \in \R \times \R^d$, the Eulerian cylinders convected by the flow of $\bar{U}$ is defined as in \cite{IsettOh16}:
\begin{align}
\hat{Q}_{\bar{U}}(\bar{\tau}, \bar{\ell}_1, \cdots, \bar{\ell}_d; (t_0,z_0)) = \{(t,z): |t-t_0| \leq \bar{\tau}, |z^i - \Phi_{t-t_0}^i(t_0,z_0)| \leq \bar{\ell}_i \}, \label{def:hatQ}
\end{align}
where $\Phi$ is the flow generated by $(\ptl_t + \bar{U} \cdot\nabla_z)$ defined in \eqref{def:Phi_s}. Setting as in \eqref{set:flow-scaling}, one gets from
Lemma \ref{Lemma:flow-scaling} that, for  $|s| \leq \tau, (x,y) \in \tilde{Q}_{\tilde{\kappa(I)}}$,
\begin{align*}
|\Phi_s^i(t_I,x,y) - \Phi_s^i(t_I,x_I,y_I)|
&\leq   \begin{cases}
A_2 e^{A_1} \ell_x \leq 3\ell_x, \quad i=1,2,\\
A_2 e^{A_1} \ell_y \leq 3\ell_y, \quad i = 3,
\end{cases}
\end{align*}
where $(t_I,x_I,y_I)=q_{\kappa(I)}$ for $q_{\kappa(I)}$  in  \eqref{def:p_kappa}.
Hence from the definitions of $Q_{\kappa}$ in \eqref{def:p_kappa}, one has
\begin{align}
Q_{\kappa} \subset \hat{Q}_{\kappa } := \hat{Q}_{u_\ell,v_\ell}(\tau,3 \ell_x, 3\ell_x, 3\ell_y;q_{\kappa}). \label{supp-Q_k-hat-Q_k}
\end{align}

\begin{Lemma}\label{Lemma:SolveSymmDiv}
	Suppose that $(H^1,H^2)(t,x,y) = e^{i \lambda \xi(t,x)}(h^1,h^2)(t,x,y) \in C_c^{\infty}(\hat{Q})$ are smooth functions with supports in $\hat{Q}=\hat{Q}_{u_\ell,v_\ell}(\tau,3 \ell_x,3 \ell_x,3 \ell_y;q_{\kappa}) \subset \mfD$  such that $\nabla_x \xi(t,x)$ is a constant and satisfies the estimate
	\begin{align}
	\|\barDt \xi\|_{L^{\infty}(\supp H)} \leq C_0  B^{-1}N^{-1/3}\mcE_u^{1/2}, \quad 1 \leq |\nabla_x \xi| \leq 100. \label{est-xi}
	\end{align}
	Moreover, $H=(H^1,H^2)$ satisfies the compatibility conditions
	\begin{align}
	\int H^l dxdy = 0, \quad  \int x^j H^l - x^l H^jdxdy = 0, \quad j,l=1,2, \label{H^l-conditions-3}
	\end{align}
	and the estimates that, for $0 \leq |\beta + \gamma | \leq 1, |\alpha| \geq 0$,
	\begin{align}
	\| (\barDt)^{\gamma}  \nabla_y^{\beta} \nabla_x^{\alpha} h \| & \leq C_{\alpha,\beta,\gamma} \tau^{-\gamma}\ell_y^{-\beta}\ell_x^{-\alpha}  B^2 N^{2/3}\Xi \mcE_u^{1/2}\mcE_1^{1/2}. \label{est-h_I-y}
	\end{align}
	Then there exist two constants  $C_1 = C_1(\vartheta,C_0,C_{\alpha,\alpha',\beta,\gamma}), C_2 =  C_2(B,\vartheta,C_0,C_{\alpha,\alpha',\beta,\gamma})$, and a $2 \times 3$ matrix $T=T^{kl} \in C_c^{\infty}(\hat{Q})$ which solves the equations
	\begin{align}
	\sum_{j=1}^2 \frac{\ptl }{\ptl x^j} T^{jl} + \frac{\ptl }{\ptl y} T^{3l} = e^{i \lambda \xi}h^l, \quad l = 1,2, \label{eq:div} \\
	T^{jl} = T^{lj}, \quad j,l = 1, 2,
	\end{align}
	with
	\begin{align}
	\supp T \subset \hat{Q} = \hat{Q}_{u_\ell,v_\ell}(\tau,3 \ell_x, 3 \ell_x, 3 \ell_y;q_{\kappa}). \label{supp-T}
	\end{align}
	Furthermore, $T$ satisfies the following estimates
	\begin{equation}
	\begin{aligned}
	\|T\| & \leq C_1 B^{-1} N^{-1/3}\mcE_u^{1/2}\mcE_1^{1/2}, \quad
	\|\nabla_x T\| \leq C_2 N^{2/3}\Xi\mcE_u^{1/2}\mcE_1^{1/2}, \\
	\|\ptl_y T\| &\leq  C_2 N^{1/6}\Xi^{1/2}\mcE_u^{1/2}\mcE_1^{3/4}, \quad
	\|\barDt T \| \leq C_2  N^{2/3} \Xi \mcE_u^{1/2}\mcE_1. \label{est-R-div}
	\end{aligned}
	\end{equation}
\end{Lemma}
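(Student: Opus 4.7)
Since $\partial_y \xi \equiv 0$, the oscillation is purely tangential, so I set $T^{3l}\equiv 0$ and reduce to solving $\sum_{j=1}^{2}\partial_{x^j} T^{jl} = e^{i\lambda\xi}h^l$ as a symmetric $2\times 2$ matrix equation with $y$ as a passive parameter. The plan is a two-stage construction: (i) a parametrix exploiting the non-stationary phase $e^{i\lambda\xi}$ (with $|\nabla_x\xi|\geq 1$ by \eqref{est-xi}) to reduce the source by a factor of $\lambda^{-1}$ per iteration, followed by (ii) a compactly supported symmetric inverse-divergence operator à la Isett--Oh to handle the low-order residual, using the compatibility conditions \eqref{H^l-conditions-3}.

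For the parametrix, write $N = \nabla_x\xi$ (constant in $(t,x)$). For any smooth $g=(g^1,g^2)$ I take the symmetric ansatz
\[
\mcP^{jl}(g) := \frac{N^j g^l + N^l g^j}{i\lambda|N|^2} - \frac{N^j N^l (N\cdot g)}{i\lambda|N|^4},
\]
and compute using $\partial_{x^j}(e^{i\lambda\xi}F) = e^{i\lambda\xi}(i\lambda N^j F + \partial_{x^j}F)$ that
$\sum_{j=1}^2 \partial_{x^j}\bigl(e^{i\lambda\xi}\mcP^{jl}(g)\bigr) = e^{i\lambda\xi}g^l + (i\lambda)^{-1} e^{i\lambda\xi}\mcE^l(g)$, where $\mcE(g)$ is a linear combination of first $x$-derivatives of $g$ (gaining $\ell_x^{-1}$ but losing nothing else, since $N$ is constant). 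Setting $g_{(0)}=h$, $g_{(k+1)}=-\mcE(g_{(k)})$, I define
\[
T^{jl}_{\mathrm{para}} := e^{i\lambda\xi}\sum_{k=0}^{K-1}(i\lambda)^{-k}\mcP^{jl}(g_{(k)}).
\]
Because $\lambda\ell_x = B^2\gg 1$, the series converges geometrically and can be truncated at any desired order $K$. By construction $T_{\mathrm{para}}$ is symmetric in $j,l$ and supported wherever $h$ is, hence inside $\hat Q$.

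The residual $e^{i\lambda\xi} r_K := e^{i\lambda\xi}h^l - \sum_{j=1}^{2}\partial_{x^j}T^{jl}_{\mathrm{para}}$ is of size $\lambda^{-K}$ times higher derivatives of $h$, and inherits the compatibility conditions \eqref{H^l-conditions-3} (the parametrix divergence automatically has zero integral and vanishing antisymmetric first moment, as one sees by integrating by parts). For $K$ chosen sufficiently large I invoke the Isett--Oh compactly supported symmetric inverse-divergence solver to obtain $T_{\mathrm{res}} \in C_c^\infty(\hat Q)$, symmetric, with $\sum_j\partial_{x^j}T_{\mathrm{res}}^{jl} = e^{i\lambda\xi}r_K^l$ and operator norms that are negligible against the parametrix. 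Setting $T = T_{\mathrm{para}} + T_{\mathrm{res}}$, $T^{3l}\equiv 0$, yields the desired tensor with support in $\hat Q$.

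The estimates \eqref{est-R-div} then follow by direct substitution of \eqref{def:ell}: $\|T\|\sim \lambda^{-1}\|h\|$ gives $C_1 B^{-1}N^{-1/3}\mcE_u^{1/2}\mcE_1^{1/2}$; $\nabla_x T$ loses a factor $\lambda$ to the phase, giving $\sim \|h\|$; $\partial_y T$ cannot exploit the phase (since $\partial_y\xi =0$) and instead uses the hypothesis on $\partial_y h$, losing $\ell_y^{-1}$; $\barDt T$ picks up either $\lambda\,\barDt\xi$ (controlled by \eqref{est-xi}) or $\tau^{-1}$ from the hypothesis on $\barDt h$, and the constraint $N\geq(\mcE_u/\mcE_1)^{3/2}$ from \eqref{ineq:N-Cor} absorbs the excess factors into the target bounds. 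The main obstacle is to simultaneously (a) preserve symmetry of the upper $2\times 2$ block at \emph{every} order of the iteration and (b) maintain compact support inside $\hat Q$; the symmetrized ansatz $\mcP$ handles (a), while the Isett--Oh compactly supported solver handles (b), with the compatibility conditions \eqref{H^l-conditions-3} providing the precise algebraic input that makes the two steps compatible.
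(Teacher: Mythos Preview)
Your parametrix stage is essentially the same as the paper's (your $\mcP^{jl}$ is an alternative symmetric right-inverse to $i\lambda N^j$, equivalent to the paper's $q_{(k)}$ in \eqref{def:q_k}), and the bookkeeping for the estimates on $T_{\mathrm{para}}$ is correct apart from the harmless slip $\lambda\ell_x = B^2 N^{2/3}$ rather than $B^2$.

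The genuine gap is your choice $T^{3l}\equiv 0$. Once you set this, the residual equation $\sum_{j=1}^{2}\partial_{x^j}T_{\mathrm{res}}^{jl}=e^{i\lambda\xi}r_K^l$ is a purely two-dimensional symmetric divergence equation with $y$ a parameter, and any compactly supported inverse-divergence solver in the $x$-variables requires the compatibility conditions to hold \emph{slice by slice}: $\int e^{i\lambda\xi}r_K^l\,dx=0$ and $\int(x^j r_K^l - x^l r_K^j)e^{i\lambda\xi}\,dx=0$ for each fixed $(t,y)$. The hypothesis \eqref{H^l-conditions-3} only gives these vanishing conditions after integration in $dxdy$, and the sources actually arising in the application (e.g.\ $\nabla_x\!\cdot S_I+\partial_y Y_I$) do \emph{not} have vanishing $x$-integral for each $y$, since $\int(\nabla_x\!\cdot S_I+\partial_y Y_I)\,dx=\partial_y\!\int Y_I\,dx$ is generically nonzero. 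So you cannot invoke a two-dimensional Isett--Oh solver on the residual, and the construction breaks at exactly the step where you claim it goes through.

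The paper avoids this by keeping the $\partial_y T^{3l}$ channel open: after the parametrix $T_{(D)}$ (your $T_{\mathrm{para}}$, truncated at order $D$ determined by $\vartheta$ via \eqref{def:D}), the residual is fed into an \emph{anisotropic three-dimensional} solver (Lemma~\ref{Lemma:OperatorSymmDiv}) that returns a $2\times 3$ matrix with nontrivial $R^{3l}$ component. The $dxdy$-integrated compatibility conditions \eqref{H^l-conditions-3} are precisely what that three-dimensional solver needs. Allowing $T^{3l}\neq 0$ is what makes the compact-support step and the given compatibility conditions compatible.
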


\begin{Remark}\label{Remark:Compatibility}
	Using integration by parts, one can verify directly that if $H$ is of the form $H^l = \sum_{j=1}^{2}\frac{\ptl}{\ptl x^j} S^{jl} + \ptl_y Y^l$, where  $S$ is a symmetric $2 \times 2$ matrix, $Y$ is a $2$-vector, and $\supp (S,Y) \subset \hat{Q}$, then $H$ satisfies the compatibility conditions \eqref{H^l-conditions-3}.
\end{Remark}

\subsubsection{Solving divergence equations with symmetry}\label{subsec:SolveDiv}

The following result for solving symmetric divergence equations is an anisotropic variant of \cite[Theorem 11.1]{IsettOh16}.
\begin{Lemma}\label{Lemma:OperatorSymmDiv}
	Let $\bar{\tau}, \bar{\ell}_x, \bar{\ell}_y, \Lambda_t, \Lambda_x, \Lambda_y, \delta_U, \delta_H $ be given positive constants that satisfy
	\begin{align}
	\bar{\tau}   &\leq \frac{\min \{ \bar{\ell}_x, \bar{\ell}_y \}}{\delta_U}, \quad \Lambda_t \geq \frac{\delta_U}{\min \{ \bar{\ell}_x, \bar{\ell}_y \}}. \label{ineq:bartau}
	\end{align}
	Suppose that $\bar{U}=(\bar{U}^1,\bar{U}^2,\bar{U}^{3})$ is a smooth vector field on $\mathbb{R} \times \mathbb{R}^2 \times \mathbb{R}$ with
	\begin{align}
	\| \nabla_x^{\alpha} \ptl_y^{\beta}   \bar{U} \|_{L^{\infty}(\hat{Q})} &\leq C_{0} \bar{\ell}_x^{-|\alpha|} \bar{\ell}_y^{-\beta} \delta_U, \quad 0 \leq |\alpha|+\beta \leq 1, \label{est-barU}
	\end{align}
	where for some point $q_0=(t_0,x_0,y_0) \in \mathbb{R} \times \mathbb{R}^2 \times \mathbb{R}$, we denote
	\begin{align*}
	\hat{Q}=\hat{Q}_{\bar{U}}(\bar{\tau},\bar{\ell}_x,\bar{\ell}_x,\bar{\ell}_y;(t_0,x_0,y_0)).
	\end{align*}
	Let $H=(H^1,H^2)(t,x,y) \in C_c^{\infty}(\hat{Q})$
	satisfy the conditions \eqref{H^l-conditions-3} and the estimates
	\begin{align}
	\|(\ptl_t + \bar{U} \cdot \nabla_{x,y})^{\gamma}\ptl_y^{\beta}\nabla_x^{\alpha}  H\|_{C^0} \leq C_0  \Lambda_x^{|\alpha|} \Lambda_y^{\beta} \Lambda_t^{\gamma} \delta_H, \quad 0 \leq |\alpha|+\beta+\gamma \leq 1. \label{est-H}
	\end{align}
	Then there exist a $2 \times 3$ matrix $R^{jl}[H] \in C_c^{\infty}(\hat{Q})$ which solves
	\begin{align}
	\sum_{j=1}^2 \frac{\ptl }{\ptl x^j} R^{jl} + \frac{\ptl }{\ptl y} R^{3l} = H^l, \quad l = 1,2 \label{eq:SymmDiv},\\
	R^{jl} = R^{lj}, \quad j,l = 1, 2,
	\end{align}
	depending linearly on $H$, and a constant $C=C(C_0)$ such that
	\begin{equation*}
	\begin{aligned}
	&\|\ptl_y^{\beta}\nabla_x^{\alpha} R^{kl}\|_{C^0} \leq C  \sum_{m=0}^{|\alpha|} \bar{\ell}_x^{-(|\alpha|-m)} \Lambda_x^m \sum_{j=0}^{\beta} \bar{\ell}_y^{-(|\beta|-j)} \Lambda_y^{j} \max \{ \bar{\ell}_x, \bar{\ell}_y \} \delta_H, \quad 0 \leq |\alpha|+\beta \leq 1, \\
	&\| (\ptl_t + \bar{U} \cdot \nabla_{x,y})R^{jl} \|_{C^0}  \leq C  \Lambda_t \max \{ \bar{\ell}_x, \bar{\ell}_y \} \delta_H.
	\end{aligned}
	\end{equation*}
\end{Lemma}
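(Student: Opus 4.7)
The strategy is to adapt \cite[Theorem 11.1]{IsettOh16} to the anisotropic setting. The reduction is to straighten the transport via Lagrangian coordinates, rescale the spatial support to a unit cube, solve a Euclidean symmetric divergence equation with compactly supported right-hand side, and then transfer the estimates back. Let $\Phi$ be the flow of $\partial_t + \bar U \cdot \nabla_{x,y}$ from \eqref{def:Phi_s}; introducing Lagrangian coordinates $(s,X,Y)$ with $s = t-t_0$ and $(X,Y)$ the spatial component of $\Phi_{-(t-t_0)}(t,x,y)$ maps $\hat{Q}$ onto the straight product cylinder $[-\bar\tau,\bar\tau]\times B_{\bar\ell_x}(x_0)\times B_{\bar\ell_y}(y_0)$ and sends $\partial_t + \bar U \cdot \nabla_{x,y}$ to $\partial_s$. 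By \eqref{est-barU}, Lemma \ref{Lemma:flow-scaling}, and the constraint $\bar\tau \leq \min\{\bar\ell_x,\bar\ell_y\}/\delta_U$ from \eqref{ineq:bartau}, the Jacobian and its inverse are bounded on $\hat{Q}$ uniformly in the parameters, so all subsequent estimates survive the pull-back with multiplicative constants depending only on $C_0$. The anisotropic rescaling $X = \bar\ell_x X'$, $Y = \bar\ell_y Y'$ then reduces the problem to a fixed unit cube.

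On the unit cube I would use a Bogovskii-type solution operator $\mathcal R$ for the symmetric divergence equation $\sum_j \partial_j R^{jl} = H^l$ with $R^{jl}=R^{lj}$ on $\{j,l\leq 2\}$. The first compatibility condition $\int H^l = 0$ produces a compactly supported, order $-1$ inverse via a standard Bogovskii construction, and the antisymmetric-moment condition $\int (x^j H^l - x^l H^j) = 0$ is exactly what allows a compactly supported symmetric correction absorbing the antisymmetric part into the $R^{3l}$ and diagonal entries without enlarging the support; Remark \ref{Remark:Compatibility} is the easy converse to this. Linearity of $\mathcal R$ in $H$ is automatic from the integral representation. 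Pulling back through the rescaling and the Lagrangian map immediately yields $\|R\| \lesssim \max\{\bar\ell_x,\bar\ell_y\}\delta_H$. The mixed derivative bound then follows from the Leibniz rule: each spatial derivative either lands on the kernel or on the cutoff accompanying $H$, costing $\bar\ell_x^{-1}$ or $\bar\ell_y^{-1}$, or else lands on the body of $H$, costing $\Lambda_x$ or $\Lambda_y$ via \eqref{est-H}; summing over all such assignments reproduces exactly the stated expression.

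The main obstacle is the material-derivative estimate. Since $\mathcal R$ is applied at each fixed $s$ in the Lagrangian frame, it commutes with $\partial_s$, giving the clean identity $\partial_s \mathcal R[\tilde H] = \mathcal R[\partial_s \tilde H]$ in Lagrangian variables, which contributes $C\Lambda_t \max\{\bar\ell_x,\bar\ell_y\}\delta_H$ directly from the $\gamma=1$ case of \eqref{est-H}. Transferring back to Eulerian coordinates produces commutator terms in which one derivative of $\bar U$ falls on spatial derivatives of $R$; by \eqref{est-barU} and the already-established spatial bounds, these are controlled by $C_0\, \delta_U \min\{\bar\ell_x,\bar\ell_y\}^{-1} \max\{\bar\ell_x,\bar\ell_y\}\delta_H$, which under the hypothesis $\Lambda_t \geq \delta_U/\min\{\bar\ell_x,\bar\ell_y\}$ from \eqref{ineq:bartau} is absorbed into the same $C\Lambda_t\max\{\bar\ell_x,\bar\ell_y\}\delta_H$. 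Combining these two contributions yields the material-derivative estimate with a single constant $C = C(C_0)$.
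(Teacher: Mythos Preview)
Your overall architecture is close to the paper's, but the first step contains a genuine gap. You propose to pass to full Lagrangian coordinates $(s,X,Y)$ via the flow of $\partial_t+\bar U\cdot\nabla_{x,y}$, solve the Euclidean divergence equation $\sum_j\partial_{X^j}\tilde R^{jl}+\partial_Y\tilde R^{3l}=\tilde H^l$ there, and pull back. The problem is that the divergence operator does not commute with a general change of variables: if $R$ is obtained by pulling $\tilde R$ back as a function, then $\nabla_{x,y}\cdot R\neq H$; if instead you push $\tilde R$ forward as a tensor with the appropriate Jacobian factors, the anisotropic symmetry constraint $R^{jl}=R^{lj}$ for $j,l\le 2$ (with $R^{3l}$ unconstrained) is not preserved, since the flow of $\bar U$ mixes the $x$ and $y$ directions. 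The statement that ``the Jacobian and its inverse are bounded\ldots so all subsequent estimates survive the pull-back'' addresses only the size of the error, not the fact that you no longer have an exact solution of \eqref{eq:SymmDiv} with the required partial symmetry.

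The paper (following \cite[Proposition~11.1]{IsettOh16}) avoids this by never changing coordinates. It works directly in Eulerian variables with an explicit Bogovskii-type integral formula built from a bump $\zeta(t,\cdot)$ that is transported by the \emph{frozen} advection field
\[
\frac{\overline{\overline{D}}}{Dt}=\partial_t+\bar U\bigl(\Phi_{t-t_0}(t_0,x_0,y_0)\bigr)\cdot\nabla_{x,y},
\]
which is a constant-coefficient vector field for each fixed $t$. This gives an exact solution of the divergence equation with the correct $2\times2$ symmetry at every time, and yields the clean commutation $\frac{\overline{\overline{D}}}{Dt}R^{jl}[H]=R^{jl}\bigl[\tfrac{\overline{\overline{D}}}{Dt}H\bigr]$. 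Your final paragraph---controlling the discrepancy between the full and frozen material derivatives by $\|\nabla\bar U\|\cdot\|\nabla R\|$ and absorbing it via $\Lambda_t\ge\delta_U/\min\{\bar\ell_x,\bar\ell_y\}$---is then exactly the right closing argument, but it should be applied to the frozen-flow construction rather than to a Lagrangian one.
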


We postpone the proof of Lemma \ref{Lemma:OperatorSymmDiv} to Appendix \ref{sec:pf-op-symm}.
Using Lemma \ref{Lemma:OperatorSymmDiv}, one can prove Lemma \ref{Lemma:SolveSymmDiv} by following the approach in \cite{Isett12,IsettOh16}.
\begin{proof}[Proof of Lemma \ref{Lemma:SolveSymmDiv}]
	Following \cite{Isett12,IsettOh16},
	one can construct the solution $T$ as the sum of an approximate solution $T_{(D)}$ and a correction $R$:
	\begin{align}
	T^{jl} &= T_{(D)}^{jl} + R^{jl}, \quad
	T_{(D)}^{jl} =  \sum_{k=1}^{D}\frac{1}{\lambda}(e^{i\lambda \xi}q_{(k)}^{jl}), \quad j,l=1,2 \label{eq:T^jl}.
	\end{align}
	Here $D$ is the smallest integer such that
	\begin{align}
	\frac{1}{3}(2D-3) \geq \vartheta^{-1}, \label{def:D}
	\end{align}
	where $\vartheta$ is the constant of Lemma \ref{Lemma:Iteration} in \eqref{ineq:N}.
	The amplitudes $q_{(k)}^{jl}$ are symmetric matrices obtained by solving the following linear equations, for $k=1,\cdots,D$:
	\begin{align}
	\sum_{j=1}^{2} i \frac{\ptl \xi}{\ptl x^j}   q_{(k)}^{jl} = h_{(k)}^l, \quad h_{(1)}^l = h^l, \quad h_{(k+1)}^l = -\frac{1}{\lambda}\sum_{j=1}^{2}\frac{\ptl }{\ptl x^j} q_{(k)}^{jl}. \label{eq:q_k}
	\end{align}
	Set, for  $k=1,\cdots,D$:
	\begin{align}
	q_{(k)} = \frac{-i}{|\nabla_x \xi|}\big( (h_{(k)} \cdot \vec{e}_{\parallel}) I + (h_{(k)} \cdot \vec{e}_{\perp})(\vec{e}_{\parallel} \otimes \vec{e}_{\perp} + \vec{e}_{\perp} \otimes \vec{e}_{\parallel}) \big), \label{def:q_k}\\
	\vec{e}_{\parallel}  = \frac{\nabla_x \xi}{|\nabla_x \xi|}, \quad \vec{e}_{\perp} = \frac{(\nabla_x \xi)^{\perp}}{|(\nabla_x \xi)^{\perp}|}. \nonumber
	\end{align}
	It is straightforward to verify that $q_{(k)}^{jl}$ is symmetric in $jl$, and solves \eqref{eq:q_k} with $\supp q_{(k)} \subset \supp H \subset \hat{Q}$. The definitions \eqref{eq:q_k} and \eqref{def:q_k} imply that
	\begin{align*}
	h_{(k+1)} &=  \frac{i}{\lambda|\nabla_x \xi|} \big(  \nabla_xh_{(k)} \cdot \vec{e}_{\parallel}  + (\vec{e}_{\perp}^T \nabla_x h_{(k)}  \vec{e}_{\perp})\vec{e}_{\parallel}  + (\vec{e}_{\parallel}^T \nabla_x h_{(k)}  \vec{e}_{\perp}) \vec{e}_{\perp}   \big)\\
	& = \frac{1}{\lambda} C_{(k+1)}  \nabla_x h_{(k)}, \quad  k = 1,\cdots,D,
	\end{align*}
	for some constant tensors $C_{(k+1)}$. Thus,
	\begin{align}
	h_{(k)} = \lambda^{-(k-1)} C_{(k)} \cdots C_{(2)} (\nabla_x)^{k-1} h, \quad  k = 2,\cdots,D+1. \label{eq:h_(k)}
	\end{align}
	\eqref{est-h_I-y} shows that, for $0 \leq |\beta + \gamma | \leq 1, |\alpha| \geq 0$,
	\begin{align*}
	| (\barDt)^{\gamma}  \nabla_y^{\beta} \nabla_x^{\alpha} q_{(k)} \| & \lesssim \lambda^{-(k-1)}\| (\barDt)^{\gamma}  \nabla_y^{\beta} \nabla_x^{\alpha+k-1}  h \|  \\
	&\leq C_{\alpha,\beta,\gamma} \tau^{-\gamma}\ell_y^{-\beta}\ell_x^{-\alpha} (\lambda \ell_x)^{-(k-1)} B^2 N^{2/3}\Xi \mcE_u^{1/2}\mcE_1^{1/2}.
	\end{align*}
	This and the relation \eqref{ineq:N-Cor} imply that
	\begin{align*}
	\|T_{(D)}\| &\leq \lambda^{-1}\sum_{k=1}^{D}\| q_{(k)} \| \lesssim B^{-1} N^{-1/3}\mcE_u^{1/2}\mcE_1^{1/2},\\
	\|\nabla_x T_{(D)}\| &\leq  \lambda^{-1}\sum_{k=1}^{D} (\lambda \|q_{(k)} \| + \| \nabla_x q_{(k)} \|)\leq C_B N^{2/3}\Xi\mcE_u^{1/2}\mcE_1^{1/2}, \\
	\|\ptl_y T_{(D)}\| &\leq\lambda^{-1}\sum_{k=1}^{D}\| \ptl_y q_{(k)} \| \leq  C_B \Xi^{1/2}\mcE_u^{3/4}\mcE_1^{1/2} \leq C_B N^{1/6}\Xi^{1/2}\mcE_u^{1/2}\mcE_1^{3/4}.
	\end{align*}
	It follows from \eqref{est-xi} and \eqref{ineq:N-Cor} that
	\begin{align*}
	\|\barDt T_{(D)} \| &\lesssim \sum_{k=1}^{D} (\|\barDt\xi\|\| q_{(k)}\| + \lambda^{-1}\| \barDt q_{(k)}\|)  \leq C_B N^{1/3}\Xi\mcE_u\mcE_1^{1/2} \\
	&\leq C_B  N^{2/3} \Xi \mcE_u^{1/2}\mcE_1.
	\end{align*}
	Hence the estimates \eqref{est-R-div} hold for $T_{(D)}$.
	
	Note that  \eqref{eq:q_k} implies that, for $k=1,\cdots,D$:
	\begin{align}
	H_{(k+1)}^l :=  e^{i\lambda \xi}h_{(k+1)}^l =  H^l -  \sum_{j=1}^{2}\frac{\ptl }{\ptl x^j} \sum_{m=1}^{k}\frac{1}{\lambda}(e^{i\lambda \xi}q_{(m)}^{jl}). \label{eq:H_{k+1}}
	\end{align}
	Due to \eqref{eq:div} and \eqref{eq:T^jl},
	$R$ satisfies the divergence equations:
	\begin{equation}
	\begin{aligned}
	\sum_{j=1}^{2}\frac{\ptl }{\ptl x^j} R^{jl} +  \frac{\ptl }{\ptl y} R^{3l} = H_{D+1}^l = e^{i\lambda \xi}h_{(D+1)}^l, \label{eq:R}\\
	R^{jl} = R^{lj}, \quad j,l = 1, \cdots, 2.
	\end{aligned}
	\end{equation}
	It follows from \eqref{ineq:N-Cor}, \eqref{est-xi}, \eqref{est-h_I-y},  and \eqref{eq:h_(k)} that for $0 \leq |\alpha|+\beta+\gamma \leq 1$,
	\begin{align*}
	\| \barDt^{\gamma} \ptl_y^{\beta} \nabla_x^{\alpha} H_{(D+1)} \|
	&\lesssim (B^{2}N^{2/3}\Xi\mcE_u^{1/2})^{\gamma} \ell_y^{-\beta} \lambda^{\alpha}  (\lambda \ell_x)^{-D}  B^2 N^{2/3}\Xi \mcE_u^{1/2}\mcE_1^{1/2}\\
	& \lesssim (\lambda \mcE_1^{1/2})^{\gamma} \ell_y^{-\beta} \lambda^{\alpha} (B^2N^{2/3})^{-D} B^2 N^{2/3}\Xi \mcE_u^{1/2}\mcE_1^{1/2}\\
	&\leq C (\lambda \mcE_1^{1/2})^{\gamma} \ell_y^{-\beta} \lambda^{\alpha}  B^{-2(D-1)}N^{-1/3}  \mcE_u^{1/2}\mcE_1^{1/2},
	\end{align*}
	where \eqref{ineq:N} and \eqref{def:D} have been used for the last inequality.
	It follows from \eqref{eq:H_{k+1}} and Remark \ref{Remark:Compatibility} that $H_{(D+1)}^l$ also satisfies the  compatibility conditions \eqref{H^l-conditions-3}. In view of \eqref{est-D_x-u_ell} and the above estimates, one can apply Lemma \ref{Lemma:OperatorSymmDiv} with
	\begin{align*}
	q_0 = q_{\kappa}, \quad \bar{\tau} = \tau, \bar{\ell}_x = 3 \ell_x, \bar{\ell}_y = 3 \ell_y, \quad \Lambda_t = \lambda \mcE_1^{1/2}, \Lambda_x = \lambda, \Lambda_y = \ell_y^{-1}, \\
	H = H_{(D+1)}, \ \bar{U} = (u_{\ell},v_{\ell}), \quad
	\delta_U = \mcE_u^{1/2}, \quad \delta_H =  B^{-2(D-1)}N^{-1/3}  \mcE_u^{1/2}\mcE_1^{1/2}.
	\end{align*}
	to obtain a solution $R^{jl} \in C_c^{\infty}(\hat{Q})$ to \eqref{eq:R} with the estimates \eqref{est-R-div}.
\end{proof}

\subsubsection{Verifications of the assumptions}		
Note that one can write
\begin{align}
\nabla \cdot (R_H + R_T + R_L) &=   \sum_I \ptl_t (e^{i \lambda \xi_I} \tilde{U}_I) + \sum_I \nabla \cdot (S_I,Y_I) + \sum_{J \neq \bar{I}} \nabla \cdot (S_{IJ},Y_{IJ}), \label{eq:R_H-R_T-R_L}\\
(S_{IJ}, Y_{IJ}) &= e^{i \lambda (\xi_I+\xi_J)}( \tilde{U}_I \otimes \tilde{U}_J,\tilde{V}_I  \tilde{U}_J), \nonumber \\
(S_{I},Y_{I}) &=  e^{i \lambda \xi_I}(u_{\ell} \otimes \tilde{U}_I + \tilde{U}_I \otimes u_{\ell}, v_{\ell} \tilde{U}_I - \ptl_y \tilde{U}_I + \tilde{V}_I u_{\ell}). \nonumber
\end{align}
The individual terms in \eqref{eq:R_H-R_T-R_L} are either of the form $e^{i\lambda \xi_I}h_I$ or $e^{i\lambda (\xi_I+\xi_J)}h_{IJ}, J \neq \bar{I}$, with supports in $Q_{\kappa(I)} \subset \hat{Q}_{\kappa(I)}$ (recall \eqref{supp-Q_k-hat-Q_k}). It follows from \eqref{supp-w_I-Q_I} that $h_I = 0 = h_{IJ}$ for any $Q_{\kappa(I)} \nsubseteq \mfD$.
If the assumptions in Lemma \ref{Lemma:SolveSymmDiv} are verified for the terms  supported in $Q_{\kappa(I)} \subset \mfD$,  one can obtain a solution $T_I$ supported in $\hat{Q}_{\kappa(I)}$ with the estimates \eqref{est-R-div}, which would imply  \eqref{tilde-FE-levels} in Lemma \ref{Lemma:Iteration}.

We first verify the estimates \eqref{est-xi} for $\xi_I$ and $\xi_{IJ}:=\xi_I + \xi_J, J \neq \bar{I}$. It is clear from the definitions \eqref{def:xi_I} that $\nabla_x \xi_I$  are constants with $1 \leq |\xi_I| = [\kappa(I)] \leq 2^4=16$.
Recall that $\supp h_I \cup \supp h_{IJ} \subset Q_I$. It follows from \eqref{eq:xi_I} and \eqref{ineq:u_ell-u_I} that
\begin{align}
\|\barDt \xi_I\|_{L^{\infty}(Q_I)} = \|u_{\ell}-u_I\|_{L^{\infty}(Q_I)} |\nabla_x \xi_I| \leq C B^{-1}N^{-1/3}\mcE_u^{1/2}. \label{est-Dt-xi}
\end{align}
Thus $\xi_I$ satisfies \eqref{est-xi}.
Due to \eqref{supp-eta-psi_k}, $h_{IJ} = 0$ if $Q_{\kappa(I)} \cap Q_{\kappa(J)} = \emptyset$. It follows from \eqref{ineq:xi_I-xi_J} that $\xi_{IJ}$ also satisfies the estimates \eqref{est-xi} for any indices $I,J$ such that $Q_{\kappa(I)} \cap Q_{\kappa(J)} \neq \emptyset$ and $J \neq \bar{I}$.

Due to Remark \ref{Remark:Compatibility}, the terms $\sum_I \nabla \cdot (S_I,Y_I) + \sum_{J \neq \bar{I}} \nabla \cdot (S_{IJ},Y_{IJ})$  satisfy the compatibility conditions \eqref{H^l-conditions-3}. It follows from  \eqref{def:w_I} that
\begin{align*}
e^{i \lambda \xi_I} \tilde{U}_I = -\Delta (\frac{1}{\lambda^2|\nabla \xi_I|^2}e^{i\lambda\xi_I}U_I) + \nabla_x \nabla \cdot (\frac{1}{\lambda^2|\nabla \xi_I|^2}e^{i\lambda\xi_I}W_I).
\end{align*}
Using integrations by parts, one has, for any $t \in \Rp$,
\begin{align*}
\int e^{i \lambda \xi_I} \tilde{U}_I^l dxdy  = 0, \quad  \int  x^j e^{i \lambda \xi_I} \tilde{U}_I^l dx dy = 0, \quad j,l=1,2.
\end{align*}
This shows that $\ptl_t (e^{i \lambda \xi_I} \tilde{U}_I)$ also satisfies  \eqref{H^l-conditions-3}.

It remains to verify the estimates \eqref{est-h_I-y} for the terms in \eqref{eq:R_H-R_T-R_L}, which are given in the following subsections.

\subsubsection{The high-high interactions terms $R_H$}\label{sec:R_H}
Recall from \eqref{eq:R_H} that
\begin{align*}
\nabla \cdot R_H = \sum_{J \neq \bar{I}} \nabla_x \cdot (e^{i \lambda (\xi_I+\xi_J)} \tilde{U}_I \otimes \tilde{U}_J) +  \sum_{J \neq \bar{I}}  \ptl_y (e^{i \lambda (\xi_I+\xi_J)}  \tilde{V}_I  \tilde{U}_J).
\end{align*}
It follows from the definitions \eqref{def:U-V} and \eqref{def:xi_I} that, for any index $I$ and $J$,
\begin{align}
\nabla_x \xi_I \cdot U_J =  (-1)^{s_I}[\kappa(I)]\f_1^{\perp} \cdot \eta_J \psi_J a_J \f_1  =0. \label{eq:xi_I-U_J}
\end{align}
Thus,
\begin{align*}
 &\nabla_x \cdot(e^{i \lambda (\xi_I+\xi_J)} \tilde{U}_I \otimes \tilde{U}_J)
=e^{i\lambda (\xi_I + \xi_J)}(i\lambda \nabla_x (\xi_I+\xi_J) \cdot (U_I + \delta U_I)  \tilde{U}_J + \nabla_x \cdot (\tilde{U}_I \otimes \tilde{U}_J))\\
&=e^{i\lambda (\xi_I + \xi_J)}\big(i\lambda \nabla_x (\xi_I+\xi_J) \cdot \delta U_I \tilde{U}_J + (\nabla_x \cdot \tilde{U}_I) \tilde{U}_J + (\tilde{U}_I \cdot \nabla_x)\tilde{U}_J \big) := e^{i\lambda (\xi_I + \xi_J)}h_{H,IJ,(1)}.
\end{align*}
It follows from  \eqref{def:ell}, \eqref{est-W_I-Dt} and \eqref{est-deltaW_I-Dt}   that $h_{H,IJ,(1)}$  satisfies \eqref{est-h_I-y}.
Since $\xi_I=\xi(t,x)$ is independent of $y$ due to \eqref{def:xi_I}, one has
\begin{align*}
  \ptl_y (e^{i \lambda (\xi_I+\xi_J)}  \tilde{V}_I  \tilde{U}_J) = e^{i \lambda (\xi_I+\xi_J)}  (\ptl_y \tilde{V}_I  \tilde{U}_J +  \tilde{V}_I  \ptl_y \tilde{U}_J) :=  e^{i \lambda (\xi_I+\xi_J)}h_{H,IJ,(2)}.
\end{align*}
Consequently, \eqref{eq:ell},  \eqref{def:ell} and \eqref{est-W_I-Dt} show that $h_{Y,IJ,(2)}$ also satisfies \eqref{est-h_I-y}.

\subsubsection{The high-low interactions terms $R_L$} Due to \eqref{eq:w_I-div-free}, $\nabla_x \cdot U + \ptl_y V =  \sum_I \nabla \cdot w_I = 0$, thus
\begin{align*}
\nabla \cdot R_{L}& =  \nabla_x \cdot ( U \otimes u_{\ell}) + \ptl_y (V u_{\ell}) = (U \cdot \nabla_x) u_{\ell} + V \ptl_y u_{\ell} \\
&= \sum_I e^{i \lambda \xi_I}( (\tilde{U}_I \cdot \nabla_x)u_{\ell} + \tilde{V}_I \ptl_y u_{\ell}  ) := \sum_I e^{i \lambda \xi_I} h_{L,I}.
\end{align*}
It follows from \eqref{eq:ell}, \eqref{est-D_x-u_ell}, \eqref{est-D_t-u_ell}, and \eqref{est-W_I-Dt} that $ h_{L,I}$ satisfies \eqref{est-h_I-y}.

\subsubsection{The transport-diffusion terms $R_T$}
Since $\nabla_x \cdot u_{\ell} + \ptl_y v_{\ell} =0$ in $\mfD$ due to  \eqref{eq:div-free-u-v-ell} and $\supp U \subset \mfD$,  one gets from  \eqref{eq:Dt-W_I} that
\begin{align*}
&\nabla \cdot R_T = \ptl_t U + \nabla_x \cdot (u_{\ell} \otimes U) + \ptl_y(v_{\ell} U) - \ptl_{yy}^2 U  = (\barDt - \ptl_{yy}^2) U \\
&= \sum_I \barDt(e^{i\lambda \xi_I}\tilde{U}_I) -\sum_I  e^{i\lambda \xi_I}\ptl_{yy}^2 \tilde{U}_I = \sum_I  e^{i\lambda \xi_I}(i \lambda \tilde{U}_I (u_{\ell}-u_I) \cdot \nabla_x \xi_I + \barDt \tilde{U}_I - \ptl_{yy}^2 \tilde{U}_I ) \\
&:=\sum_I  e^{i\lambda \xi_I}(h_{T,I,(1)} + h_{T,I,(2)} + h_{T,I,(3)}).
\end{align*}
It follows from \eqref{est-D_x-u_ell},  \eqref{est-D_t-u_ell}, \eqref{est-W_I-Dt} and  \eqref{ineq:u_ell-u_I} that $h_{T,I,(1)}$ satisfies \eqref{est-h_I-y}. While that $h_{T,I,(2)}$ and $h_{T,I,(3)}$ satisfy \eqref{est-h_I-y} follows easily from  \eqref{est-W_I-Dt}.

\subsubsection{Conclusion of the proof of Lemma \ref{Lemma:Iteration}}
It follows from \eqref{eq:R_S} and \eqref{eq:R_M} that
\begin{align*}
\big(\text{supp}_{t,y} R_M \cup \text{supp}_{t,y} R_S \big)  \subset \big(\text{supp}_{t,y} R \cup \text{supp}_{t,y} (S_{\ell}, Y_{\ell}) \cup \supp e \cup \supp e_{\ell} \cup \text{supp}_{t,y} w \big).
\end{align*}
Recall from \eqref{eq:R_H-R_T-R_L} that  $R_H, R_L$, and $R_T$ are obtained by solving divergence equations of the form $\nabla \cdot T_I = h_I$, with $\supp h_I \subset Q_I \subset \supp w$.
Thus, \eqref{supp-T} implies that
\begin{align*}
\big( \text{supp}_{t,y} R_T \cup \text{supp}_{t,y} R_H \cup \text{supp}_{t,y} R_L \big) \subset N(\text{supp}_{t,y} w; \tau, 3 \ell_y).
\end{align*}
As a consequence of \eqref{def:tlR}, \eqref{supp-e}, \eqref{supp-S_ell-Y_ell}, \eqref{supp-e_ell}, and  \eqref{supp-w}, it holds that
\begin{align*}
\text{supp}_{t,y} \delta R \subset N(\supp \e;\frac{1}{2}\ell^2,\frac{1}{2}\ell) \cup N(\text{supp}_{t,y} w; \tau, 3 \ell_y) \subset N(\supp \e;\ell^2,\ell),
\end{align*}
if one chooses the constant $B \geq 8$.
Therefore,
\begin{align*}
\text{supp}_{t,y} \tilde{R} &\subset \big(\text{supp}_{t,y} R \cup \text{supp}_{t,y} \delta R \big) \subset N(\supp \e;\ell^2,\ell).
\end{align*}
Together with \eqref{supp-w},  this  yields the desired estimates \eqref{supp-w-tildeR} for the supports.

Collecting all the estimates above shows that there exists a constant $C_1=C_1(\vartheta)$ which is  independent of $B$ and $N$ such that
\begin{align*}
\|\delta R\|_{C^0} = \|R_S + R_{M}  + R_H + R_T + R_L \|_{C^0}  \leq C_1 B^{-1} N^{-1/3}\mcE_u^{1/2}\mcE_1^{1/2}.
\end{align*}
Now one can fix the constant $B$ such that
\begin{align*}
B \geq \max \{ C_1, 8 \}.
\end{align*}
It follows from the estimates above and the relations \eqref{ineq:N} that there exists a constant $C_{\vartheta}$  such that $(\tilde{u},\tilde{v},\tilde{R})$ have frequency-energy levels below  $(\tilde{\Xi},\tilde{\mcE})$ as given by \eqref{tilde-FE-levels}.
The proof of Lemma \ref{Lemma:Iteration} is completed.

\section{Proof of the theorems}\label{sec:iterThm}

\subsection{Proof of Theorem \ref{Thm:Prandtl}}\label{subserc:proofPrandtl}

\subsubsection{Setting up}

Let $(\underline{u},\underline{v})$ be given as in Theorem \ref{Thm:Prandtl}. Starting with $(u_{(0)},v_{(0)})=(\underline{u},\underline{v})$, we will use Lemma \ref{Lemma:Iteration} to construct iteratively a sequence of solutions $(u_{(n)},v_{(n)},R_{(n)})$ to the approximate system \eqref{eq:PrStress}, with frequency-energy levels below
\begin{align}
(\Xi_{(n)},\mcE_{(n)})=(\Xi_{(n)},\mcE_{(n),u},\mcE_{(n),1}, \mcE_{(n),2}, \mcE_{(n),3}).
\end{align}

Let $\delta, \eps \in (0,1)$ be positive constants to be chosen as follows.  Given two constants $\alpha_0 \in  (0,1/21)$ and $\beta_0 \in (0,1/10)$, we choose the constant $\delta \in (0,1)$ sufficiently small so that
\begin{equation}
\alpha_0 < \frac{1}{21+6\delta(\delta^2+4\delta+6)}, \quad \beta_0 < \frac{1}{10+3\delta(\delta^2+4\delta+6)}. \label{ineq:alpha}
\end{equation}
Set
\begin{equation}
\vartheta = \min\{\frac{\delta}{40},\frac{\log 2}{4 \log 10} \}, \label{def:vartheta} \end{equation}
and let $C=C_{\vartheta}$ be the constant in Lemma  \ref{Lemma:Iteration}.
Suppose that $H = (H^1,H^2)$ is a smooth vector field on $\mathbb{T}^2$ with $\int_{\mathbb{T}^2} H dx = 0$. Let $\Delta_x^{-1} H$ be the solution to the Poisson equation on $\mathbb{T}^2$ with zero averages:
\begin{align*}
\Delta_x (\Delta_x^{-1} H) = H, \quad  \int_{\mathbb{T}^2} \Delta_x^{-1} H dx = 0.
\end{align*}
Define
\begin{align*}
\mathcal{R}[H] = (\nabla_x \cdot \Delta_x^{-1} H) I + (\nabla_x \mathcal{P} \Delta_x^{-1} H + (\nabla_x \mathcal{P} \Delta_x^{-1} H)^t ),
\end{align*}
where $\mathcal{P} = I - \nabla_x \Delta_x^{-1} \nabla_x \cdot$ is the projection into divergence-free vector field on $\mathbb{T}^2$.
It is straightforward to verify that $\mathcal{R}[H]$ is a symmetric $2 \times 2$ matrix and solves
\[ \nabla_x \cdot \mathcal{R}[H] = H. \]
It follows from \eqref{u-u_S} that the following mean-zero conditions are satisfied:
\begin{align*}
\int_{\T^2} \ptl_t (\underline{u} - u_C)dx = 0, \quad \int \ptl_{yy}^2 (\underline{u} - u_C) dx  = 0, \quad \text{ in } \Rp \times \Rp.
\end{align*}
Set $(u_{(0)},v_{(0)}) =(\underline{u},\underline{v}),$ and
\begin{align*}
R_{(0)} &= \Big(\underline{u} \otimes \underline{u} - u_C \otimes u_C, \underline{v} \underline{u} - v_C u_C\Big) + \Big(\mathcal{R}(\ptl_t (\underline{u} - u_C) - \ptl_{yy}^2 (\underline{u} - u_C)),0\Big).
\end{align*}
Due to the assumptions \eqref{eq:div-bu-bv} and  that $(u_C,v_C)$ is a classical solution to the system \eqref{eq:Prandtl}, it is straightforward to verify that $(u_{(0)},v_{(0)},R_{(0)})$ solves the approximate system \eqref{eq:PrStress} with  $\text{supp}_{t,y} R_{(0)} \subset \text{supp}_{t,y}(\underline{u} - u_C,\underline{v}-v_C)$.

Set
\begin{equation}
\mcD_{(0)} = \text{supp}_{t,y}(\underline{u} - u_C,\underline{v}-v_C), \quad
\tilde{\mcD} = N(\mcD_{(0)};\rho,\rho^{1/2}). \label{def:tildemcD}
\end{equation}
Let
\begin{align}
\mcE_{(0),u} &= 25 \bar{\mcE}, \quad \mcE_{(0),1} = 25\bar{\mcE}, \quad \mcE_{(0),2} =2 \bar{\mcE}, \quad \mcE_{(0),3} = \bar{\mcE}, \label{eq:E_(0)}
\end{align}
where
\begin{equation}
\bar{\mcE} = \max\{\|(u_{(0)},v_{(0)})\|_{C^0}^2,\|R_{(0)}\|_{C^0},1\}.
\end{equation}
Fix $\Xi_{(0)}$ to be a large constant such that
\begin{align}
\Xi_{(0)} \geq 10000 (\rho^{-1}+1)\bar{\mcE}^{3/2}C_{\vartheta}^2 \label{ineq:Xi_0},
\end{align}
and for $0 \leq |\alpha|+\beta/2+\gamma\leq 1$,
\begin{align*}
\|(\ptl_t + u_{(0)} \cdot \nabla_x + v_{(0)} \ptl_y )^{\gamma}  \nabla_{x}^{\alpha} \ptl_y^{\beta}  R_{(0)}\|_{C^0} &\leq \Xi_{(0)}^{|\alpha|} (\Xi_{(0)}^{1/2}\bar{\mcE}^{1/4})^{\beta+2\gamma} \bar{\mcE},\\
\|(\ptl_t + u_{(0)} \cdot \nabla_x + v_{(0)} \ptl_y )^{\gamma}  \nabla_{x}^{\alpha} \ptl_y^{\beta}  (u_{(0)},v_{(0)})\|_{C^0} &\leq \Xi_{(0)}^{|\alpha|} (\Xi_{(0)}^{1/2}\bar{\mcE}^{1/4})^{\beta+2\gamma} \bar{\mcE}^{1/2}.
\end{align*}
Then it is direct to verify that $(u_{(0)},v_{(0)},R_{(0)})$ has frequency-energy levels below $(\Xi_{(0)},\mcE_{(0)})$.
Choose $\eps$ sufficiently small so that
\begin{equation}
\begin{aligned}
\eps \leq \max\{ C_{\vartheta}^{-1/2}, (2^6C_{\vartheta})^{-\vartheta/\delta - \vartheta}, \frac{1}{4}\bar{\mcE},  \bar{\mcE}^{-1}, \Xi_{(0)}^{-1/4} \} \label{ineq:eps-1},
\end{aligned}
\end{equation}
where
\begin{align}
b=3(1+\delta)^4-\frac{3}{2}(2+\delta) = 3\delta(\frac{7}{2} + 6\delta + 4\delta^2 + \delta^3).
\end{align}

\subsubsection{The parameters of the iterations}

The sequence of frequency-energy levels $\{(\Xi_{(n)},\mcE_{(n)}\}$ are chosen as follows. Recall that $\{(\Xi_{(0)},\mcE_{(0)})\}$ has already been determined above. Set
\begin{align}
\Xi_{(n+1)} = C_{\vartheta} N_{(n+1)} \Xi_{(n)},  \quad \text{ for } n \geq 0, \label{eq:iter-Xi} \\
\mcE_{(n+1),u} = \mcE_{(n),1}, \quad \mcE_{(n+1),1}=2\mcE_{(n),2}, \quad \mcE_{(n+1),2} = 2 \mcE_{(n),3}, \quad \text{ for } n \geq 0, \label{eq:iter}\\
\mcE_{(1),3} = \eps, \quad \mcE_{(n+1),3} = \mcE_{(n),3}^{1+\delta}  \quad \text{ for } n \geq 1, \label{eq:iter-E_3}\\
\ell_{(n)} = \Xi_{(n)}^{-1/2}\mcE_{(n),u}^{-1/4},  \quad \mcD_{(n+1)} = N(\mcD_{(n)};50\ell_{(n)}^{2},50\ell_{(n)})\quad \text{ for } n \geq 0. \label{def:ell_(n)}
\end{align}

Now we choose a sequences of parameters $\{N_{(n)}\}$ to apply Lemma \ref{Lemma:Iteration}. Note that $\{N_{(n)}\}$ and $(\Xi_{(n)},\mcE_{(n)})$ need to satisfy \eqref{eq:ell} and \eqref{ineq:N}.
Furthermore,
\eqref{ineq:N} requires that $\mcE_{(n+1),3} \geq N_{(n+1)}^{-1/3}\mcE_{(n),u}^{1/2} \mcE_{(n),1}^{1/2}$, that is,
\begin{align}
N_{(n+1)} \geq    \left(\frac{\mcE_{(n),u}}{\mcE_{(n),3}}\right)^{3/2}\left(\frac{\mcE_{(n),1}}{\mcE_{(n),3}}\right)^{3/2}, \quad \text{for } n \geq 0. \label{ineq:N_(n+1)-1}
\end{align}
Accordingly,
we set
\begin{align}
N_{(1)} = (\mcE_{(0),u}\mcE_{(0),1})^{3/2}\eps^{-3}, \label{eq:N_(1)}
\end{align}
and, for $n \geq 1$,
\begin{align}
N_{(n+1)} &= \left(\frac{\mcE_{(n),u}}{\mcE_{(n),3}}\right)^{3/2}\left(\frac{\mcE_{(n),1}}{\mcE_{(n),3}}\right)^{3/2}\mcE_{(n),3}^{-3\delta} = (\mcE_{(n),u}\mcE_{(n),1})^{3/2}\mcE_{(n),3}^{-3(1+\delta)}. \label{eq:N_(n+1)-1}
\end{align}

It follows from the definitions \eqref{eq:iter}, \eqref{eq:iter-E_3} and \eqref{eq:N_(n+1)-1} that
\begin{align}
\mcE_{(n),3} &= \eps^{(1+\delta)^{n-1}} \quad \text{for } n \geq 1, \\
(\mcE_{(n),u},&\mcE_{(n),1},\mcE_{(n),2}) = (2^2\eps^{(1+\delta)^{n-4}}, 2^2\eps^{(1+\delta)^{n-3}}, 2 \eps^{(1+\delta)^{n-2}}) \quad \text{for } n \geq 4, \label{eq:E-geq-3}\\
\mcE_{(n),u} &= \mcE_{(n-1),1} = 2 \mcE_{(n-2),2} = 2^2 \mcE_{(n-3),3}, \quad \text{for } n \geq 4, \\
\mcE_{(n),1} &= 2 \mcE_{(n-1),2} = 2^2 \mcE_{(n-2),3} =  2^2 \mcE_{(n-3),3}^{1+\delta}, \quad \text{for } n \geq 4,  \label{eq:E_(1,n)}\\
\mcE_{(n),3} &= \mcE_{(n-1),3}^{1+\delta} = \mcE_{(n-2),3}^{(1+\delta)^2} = \mcE_{(n-3),3}^{(1+\delta)^3} \quad \text{for } n \geq 4,\\
N_{(n+1)} &= (\mcE_{(n),u}\mcE_{(n),1})^{3/2}\mcE_{(n),3}^{-3(1+\delta)} = 2^6 \mcE_{(n-3),3}^{-3(1+\delta)^4+3(2+\delta)/2}\\
&= 2^6  \mcE_{(n-3),3}^{-b} = 2^6 \eps^{-(1+\delta)^{n-4}b} \quad \text{for } n \geq 4. \label{eq:iter-N}
\end{align}

The inequality \eqref{ineq:N_(n+1)-1} follows from the definitions of $N_{(n+1)}$ and the fact that $\mcE_{(n),3} \leq 1$.
It is straightforward to verify that  the inequality \eqref{E_i} holds for $(\Xi_{(0)},\mcE_{(0)})$. Suppose it holds for $(\Xi_{(n)},\mcE_{(n)})$ for some $n \geq 0$. Then
\begin{align*}
4\mcE_{(n+1),3} = 4 \mcE_{(n),3}^{1+\delta} \leq  2\mcE_{(n+1),2} = 4 \mcE_{(n),3}  \leq \mcE_{(n+1),1} = 2 \mcE_{(n),2}  \leq \mcE_{(n+1),u} = \mcE{(n),1}.
\end{align*}
Thus the inequality \eqref{E_i} is verified for all $n \geq 0$.

\subsubsection{The iteration step}
Starting from $(u_{(0)},v_{(0)},R_{(0)})$, suppose that one has obtained functions $(u_{(n)},v_{(n)},R_{(n)})$ which solve the approximate system  \eqref{eq:PrStress} with  frequency-energy levels below $(\Xi_{(n)},\mcE_{(n)})$ and $\text{supp}_{t,y} R_{(n)} \subset \mcD_{(n)}$, iteratively by applying Lemma \ref{Lemma:Iteration} to $(u_{(n-1)},v_{(n-1)},R_{(n-1)})$ with  frequency-energy levels below $(\Xi_{(n-1)},\mcE_{(n-1)})$ and $\text{supp}_{t,y} R_{(n-1)} \subset \mcD_{(n-1)}$.
We first establish some  bounds on the supports of $R_{(n)}$ and $\|v_{(n)}\|_{L^{\infty}}$.

It follows from  \eqref{E_i},  \eqref{eq:iter-Xi},\eqref{eq:iter}, \eqref{def:ell_(n)}, and \eqref{ineq:N_(n+1)-1} that
\begin{align}
\frac{\ell_{(n+1)}}{\ell_{(n)}} &= \left(\frac{\Xi_{(n)}}{\Xi_{(n+1)}}\right)^{1/2} \left(\frac{\mcE_{(n),u}}{\mcE_{(n+1),u}}\right)^{1/4} \leq N_{(n+1)}^{-1/2} \mcE_{(n),u}^{1/4}\mcE_{(n),1}^{-1/4} \leq \frac{\mcE_{(n),3}}{\mcE_{(n),1}}\left(\frac{\mcE_{(n),3}}{\mcE_{(n),u}}\right)^{1/2} \nonumber \\
&  \leq \frac{1}{8} \quad \text{ for }  n \geq 0. \label{ineq:ell_(n+1)-ell_(n)}
\end{align}
Note that \eqref{eq:E_(0)} and \eqref{ineq:Xi_0} imply that
\begin{align*}
100 \ell_{(0)}^2 \leq \rho, \quad 100 \ell_{(0)} \leq \rho^{1/2}.
\end{align*}
It follows from \eqref{def:tildemcD}, \eqref{def:ell_(n)} and \eqref{ineq:ell_(n+1)-ell_(n)} that for $n \geq 0$,
\begin{align}
\mcD_{(n)} \subset N(\mcD_{(0)};50 \sum_{k=1}^n \ell_{(k)}^2, 50 \sum_{k=1}^n \ell_{(k)} ) \subset N(\mcD_{(0)};100 \ell_{(0)}^2, 100 \ell_{(0)} ) \subset \tilde{\mcD}. \label{est-supp-D_n}
\end{align}
Recalling  \eqref{eq:E_(0)} and \eqref{eq:iter}, one has
\begin{align}
\mcE_{(0),1} = 25 \bar{\mcE},\quad  \mcE_{(1),1}=\mcE_{(2),1}=4 \bar{\mcE}, \quad \mcE_{(n),1}=4 \eps^{(1+\delta)^{n-3}}, \text{ for }n \geq 3. \label{eq:mcE_(1)}
\end{align}
It follows from the estimates \eqref{est-w} that
\begin{align}
\|(u_{(n)},v_{(n)})\| &\leq \|(u_{(0)},v_{(0)})\| + \sum_{k=1}^{n-1} \|(u_{(k+1)}-u_{(k)},v_{(k+1)}-v_{(k)})\| \\
&\leq \bar{\mcE} + C_{\vartheta}\sum_{k=1}^{n-1} \mcE_{(k),1}^{1/2} \leq 50 C_{\vartheta} \bar{\mcE}. \label{est-v_n}
\end{align}
Note that \eqref{eq:E_(0)} and \eqref{ineq:Xi_0} yield
\begin{align*}
\Xi_{(0)}\mcE_{(0),u}^{1/2} \geq (100C_{\vartheta} \bar{\mcE})^2.
\end{align*}
Suppose that $\Xi_{(n)}\mcE_{(n),u}^{1/2} \geq (100C_{\vartheta} \bar{\mcE})^2$ for some $n \geq 0$. Then \eqref{ineq:N_(n+1)-1} implies
\begin{align*}
\Xi_{(n+1)} \mcE_{(n+1),u}^{1/2} &= C_{\vartheta} N_{(n+1)}\Xi_{(n)} \mcE_{(n),1}^{1/2} \geq C_{\vartheta}  N_{(n+1)} \frac{\mcE_{(n),1}^{1/2}}{\mcE_{(n),u}^{1/2}} (100C_{\vartheta} \bar{\mcE})^2 \\
&\geq \frac{\mcE_{(n),u}^{3/2}\mcE_{(n),1}^{3/2} \mcE_{(n),1}^{1/2}}{\mcE_{(n),3}^3 \mcE_{(n),u}^{1/2}} (100C_{\vartheta} \bar{\mcE})^2 \geq (100C_{\vartheta} \bar{\mcE})^2  \geq (1+\|v_{(n)}\|_{L^{\infty}} )^{2},
\end{align*}
where the last inequality follows from \eqref{est-v_n}. Thus \eqref{eq:ell}  holds for all $n \geq 0$.

Set
\begin{align}
e_{(n+1)}^{1/2} &= 2 \mcE_{(n),1}^{1/2} \chi_{N(\mcD_{(n)};2\ell_{(n)}^{2},2\ell_{(n)})} * ( \bar{\eta}_{\ell_{(n)}^2}(t)\bar{\eta}_{\ell_{(n)}}(y)),
\end{align}
where $\chi_{N(\mcD_{(n)};2\ell_{(n)}^{2},2\ell_{(n)})}$ denotes the indicator function for the set $N(\mcD_{(n)};2\ell_{(n)}^{2},2\ell_{(n)})$.
Then $\e_{(n+1)}^{1/2}$ is a smooth function with
\begin{align*}
\supp \e_{(n+1)} \subset N(\mcD_{(n)};3\ell_{(n)}^2,3\ell_{(n)}),  ~
\e_{(n+1)}^{1/2} = 2 \mcE_{(n),1}^{1/2} \text{ on }   N(\mcD_{(n)};\ell_{(n)}^2,\ell_{(n)}), \\
\|\e_{(n+1)}^{1/2}\|_{C^0} \leq 2\mcE_{(n),1}^{1/2}, \quad \|\ptl_t^{\alpha} \ptl_y^{\beta} (\e_{(n+1)}^{1/2})\|_{C^0} \leq C_{\alpha,\beta}  \ell_{(n)}^{-(2\alpha + \beta)} \mcE_{(n),1}^{1/2}.
\end{align*}
It follows from \eqref{est-supp-D_n} that
\begin{align}
N(\supp \e_{(n+1)};10\ell_{(n)}^2,10\ell_{(n)}) \subset N(\mcD_{(n)};13\ell_{(n)}^2,13\ell_{(n)}) \subset \mcD_{(n+1)} \subset \tilde{\mcD}. \label{supp-e_(n+1)}
\end{align}
It is straightforward to verify that $\e_{(n+1)}$ satisfies the estimates \eqref{supp-e} and \eqref{est-e}.

In order to apply Lemma \ref{Lemma:Iteration}, it remains to show that
\begin{align}
N_{(n+1)} \geq  \Xi_{(n)}^{\vartheta}, \quad \text{ for } n \geq 0. \label{ineq:N_(n+1)-Xi}
\end{align}
Indeed, it follows from \eqref{eq:N_(1)}, \eqref{eq:N_(n+1)-1} and \eqref{eq:iter-N} that the following rough bounds hold
\begin{align*}
 2^3 \eps^{-3\delta} \leq N_{(k)} \leq 10^3 \bar{\mcE}^3 \overline\eps^{-3(1+\delta)^3} \quad \text{ for } k=1,\cdots,5.
\end{align*}
Thus using \eqref{ineq:eps-1}, one can obtain
\begin{align*}
\Xi_{(k)} &= C_{\vartheta}^k N_{(k)}N_{(k-1)} \cdots N_{(1)} \Xi_{(0)}  \leq C_{\vartheta}^k (10 \bar{\mcE} \overline\eps^{-(1+\delta)^3})^{3k} \Xi_{(0)} \leq  C_{\vartheta}^4 (10 \bar{\mcE} \overline\eps^{-8})^{12} \Xi_{(0)}\\
& \leq 10^{12} \eps^{-120}, \quad \text{ for } k=0,\cdots,4.
\end{align*}
This, together with \eqref{def:vartheta}, shows  that
\[ N_{(k)} \geq 2^3 \eps^{-3\delta} \geq (10^{12} \eps^{-120})^{\vartheta} \geq  \Xi_{(k-1)}^{\vartheta}, \quad \text{ for } k=1,\cdots,5.\]
Now we use induction on $n$. Suppose that for some $n \geq 5$,
\begin{align}
N_{(n)} \geq  \Xi_{(n-1)}^{\vartheta}. \label{N_n-Xi}
\end{align}

Then using the expression \eqref{eq:iter-N} for $N_{(n+1)}$ leads to
\begin{align*}
N_{(n+1)} &= 2^6 \eps^{-(1+\delta)^{n-4}b} \geq 2^6 (2^6C_{\vartheta})^{\vartheta}  \eps^{-(1+\delta)^{n-5}b(1+\vartheta)} =  C_{\vartheta}^{\vartheta} N_{(n)}^{1+\vartheta} \\
&\geq  (C_{\vartheta} N_{(n)} \Xi_{(n-1)})^{\vartheta} = \Xi_{(n)}^{\vartheta},
\end{align*}
where the first inequality follows from  \eqref{def:vartheta} and \eqref{ineq:eps-1}, and the induction assumption \eqref{N_n-Xi} is used in the last inequality. This confirms \eqref{ineq:N_(n+1)-Xi}.

Now one can apply Lemma \ref{Lemma:Iteration} to obtain $(u_{(n+1)},v_{(n+1)},R_{(n+1)})$ with  frequency-energy levels below $(\Xi_{(n)},\mcE_{(n)})$. It follows from \eqref{supp-w-tildeR} and \eqref{supp-e_(n+1)} that
\begin{align}
\supp_{t,y}(u_{(n+1)}-u_{(n)},v_{(n+1)}-v{(n)},R_{(n+1)}) \subset  \mcD_{(n+1)}. \label{supp-w_(n+1)}
\end{align}

\subsubsection{Convergence and regularity}
 Denote $w_{(n+1)} = (u_{(n+1)}-u_{(n)},v_{(n+1)}-v_{(n)})$.
Then \eqref{est-w} implies that for $0 \leq |\alpha|+ 2\beta \leq 1$,
\begin{align}
\|\nabla_x^{\alpha}\ptl_y^{\beta}w_{(n+1)}\|_{C^0} &\leq C_{\vartheta} (N_{(n+1)}\Xi_{(n)})^{|\alpha|} (N_{(n+1)}^{1/2} \Xi_{(n)}^{1/2} \mcE_{(n),1}^{1/4})^{\beta} \mcE_{(n),1}^{1/2}, \label{ineq:w_(n+1)}\\
\|\ptl_t w_{(n+1)}\|_{C^0} &\leq \| (\ptl_t + u_{(n)} \cdot \nabla_x + v_{(n)}\ptl_y) w_{(n+1)} \| + \|u_{(n)} \cdot \nabla_x w_{(n+1)}\| + \|v_{(n)} \ptl_y w_{(n+1)}\| \nonumber \\
& \leq  C_{\vartheta} N_{n+1}\Xi_{n}\mcE_{(n),1}^{1/2}.  \label{ineq:w_(n+1)-t}
\end{align}

Note that $\{\mcE_{(n),1}^{1/2}\}$ is a Cauchy series due to \eqref{eq:E-geq-3}. So it follows from \eqref{ineq:w_(n+1)} and \eqref{ineq:w_(n+1)-t}
that the sequence $\{(u_{(n)},v_{(n)})\}$ converges uniformly to a continuous function $(u,v)$. Since $(u_{(n)},v_{(n)},R_{(n)})$ solves the approximate system  \eqref{eq:PrStress} with  frequency-energy levels below $(\Xi_{(n)},\mcE_{(n),u},\mcE_{(n),1},\mcE_{(n),2},\mcE_{(n),3})$,
\begin{align*}
\|R_{(n)}\|_{C^0}  \leq \mcE_{(n),1}+\mcE_{(n),2}+\mcE_{(n),3} \to 0,
\end{align*}
due to \eqref{eq:E-geq-3}, thus $(u,v)$ is a weak solution to the Prandtl system \eqref{eq:Prandtl}.

Next we consider the regularity of the solutions.
It follows from the estimates \eqref{ineq:w_(n+1)}, \eqref{ineq:w_(n+1)-t} and standard interpolations that for $\alpha \in (0,1)$,
\begin{align*}
\| w_{(n+1)} \|_{C^{\alpha}_{t,x}} \leq C_{\alpha} (N_{n+1}\Xi_{n})^{\alpha} \mcE_{(n),1}^{1/2}.
\end{align*}
Set
\begin{align*}
a_{n} =  (N_{(n)}\Xi_{(n-1)})^{\alpha} \mcE_{(n-1),1}^{1/2}.
\end{align*}
It follows from  \eqref{eq:iter-Xi}, \eqref{eq:iter}, \eqref{eq:E_(1,n)} and \eqref{eq:iter-N} that for $n \geq 4$,
\begin{align*}
\frac{a_{n+1}}{a_{n}} &= \frac{(N_{(n+1)}\Xi_{(n)})^{\alpha} \mcE_{(n),1}^{1/2}}{(N_{(n)}\Xi_{(n-1)})^{\alpha} \mcE_{(n-1),1}^{1/2}} = C_{\vartheta}^{\alpha}N_{(n+1)}^{\alpha} \left(\frac{\mcE_{(n),1}}{ \mcE_{(n-1),1}}\right)^{1/2}= C_{\vartheta}^{\alpha}N_{(n+1)}^{\alpha} \left(\mcE_{(n-3),3}^{\delta}\right)^{1/2}\\
&= C_{\vartheta}^{\alpha}2^{6\alpha}\mcE_{(n-3),3}^{\delta/2 - b\alpha} = C_{\vartheta}^{\alpha}2^{6\alpha} \eps^{(\delta/2 - b\alpha)(1+\delta)^{n-4}} .
\end{align*}
Set $\alpha = \alpha_0$. Then \eqref{ineq:alpha} shows that
\begin{align*}
\gamma:=\frac{\delta}{2} - b \alpha_0 = \frac{\delta}{2} - 3\delta(\frac{7}{2} + 6\delta + 4\delta^2 + \delta^3)\alpha_0 >0.
\end{align*}
Hence $\limsup_n \frac{a_{n+1}}{a_{n}} = 0$ and
thus $\{\| w_{(n+1)} \|_{C^{\alpha_0}_{t,x}}\}$ is a Cauchy series.
Similarly, using the estimates \eqref{ineq:w_(n+1)}, \eqref{ineq:w_(n+1)-t} and standard interpolations one can get that,  for $\beta\in (0,1)$,
\begin{align*}
\| w_{(n+1)} \|_{C^{\beta}_{y}} \leq C_{\beta} (N_{(n+1)}^{1/2} \Xi_{(n)}^{1/2} \mcE_{(n),1}^{1/4})^{\beta}  \mcE_{(n),1}^{1/2}.
\end{align*}
Set
\begin{align*}
b_{n} =  (N_{(n)}^{1/2} \Xi_{(n-1)}^{1/2} \mcE_{(n-1),1}^{1/4})^{\beta}  \mcE_{(n-1),1}^{1/2}.
\end{align*}
Then, \eqref{eq:iter-Xi}, \eqref{eq:iter}, \eqref{eq:E_(1,n)} and \eqref{eq:iter-N} yield that for $n \geq 4$,
\begin{align*}
\frac{b_{n+1}}{b_{n}} &= \frac{(N_{(n+1)}^{1/2} \Xi_{(n)}^{1/2} \mcE_{(n),1}^{1/4})^{\beta}  \mcE_{(n),1}^{1/2}}{(N_{(n)}^{1/2} \Xi_{(n-1)}^{1/2} \mcE_{(n-1),1}^{1/4})^{\beta}  \mcE_{(n-1),1}^{1/2}} = C_{\vartheta}^{\beta/2}N_{(n+1)}^{\beta/2} \left(\frac{\mcE_{(n),1}}{ \mcE_{(n-1),1}}\right)^{\frac{1}{2}+\frac{\beta}{4}}\\
&= C_{\vartheta}^{\beta/2}N_{(n+1)}^{\beta/2} \left(\mcE_{(n-3),3}^{\delta}\right)^{\frac{1}{2}+\frac{\beta}{4}}
= C_{\vartheta}^{\beta/2}2^{3\beta}\mcE_{(n-3),3}^{(\delta + \frac{\delta \beta}{2} - b \beta)/2} = C_{\vartheta}^{\beta/2} \eps^{(\delta + \frac{\delta \beta}{2} - b \beta)(1+\delta)^{n-4}/2} .
\end{align*}
Set $\beta = \beta_0$. It follows from \eqref{ineq:alpha} that
\begin{align*}
\gamma:=\delta + \frac{\delta \beta_0 }{2}- b \beta_0 = \delta(1+\frac{\beta_0}{2}) - 3\delta(\frac{7}{2} + 6\delta + 4\delta^2 + \delta^3)\beta_0 >0.
\end{align*}
Hence $\limsup_n \frac{b_{n+1}}{b_{n}} = 0$
and  thus $\{\| w_{(n+1)} \|_{C^{\beta_0}_{y}}\}$ is a Cauchy series.
We thus have proved the estimates \eqref{ineq:u-u_S}.

Furthremore, \eqref{est-supp-D_n} and \eqref{supp-w_(n+1)} yield that
\begin{align*}
\text{supp}_{t,y} (u - \underline{u},v - \underline{v}) \subset \bigcup_{n=1}^{\infty} \supp_{t,y} w_k \subset \bigcup_{n=1}^{\infty} \mcD_{(n)} \subset  N(\text{supp}_{t,y}(\underline{u} - u_C,\underline{v}-v_C);\rho,\rho^{1/2}).
\end{align*}

\subsubsection{Weak convergence of the solution sequence}
The above scheme shows that there exists a  constant $\bar{\eps}>0$, such that for any choice of $\eps < \bar{\eps}$ in \eqref{ineq:eps-1}, there exists a weak solution to the system \eqref{eq:Prandtl} of the form
\begin{align*}
(u,v) = (\underline{u},\underline{v}) + \sum_{n=1}^{\infty}w_{(n)}
\end{align*}
satisfying the estimate \eqref{ineq:u-u_S}. It follows from \eqref{eq:mcE_(1)} and \eqref{ineq:w_(n+1)} that
\begin{align*}
\|(u - \underline{u}, v - \underline{v})\|_{C^0} & \leq \sum_{n=1}^{\infty}\|w_{(n)}\|_{C^0} \leq C \sum_{n=0}^{\infty} \mcE_{(n),1}^{1/2} \leq  C \bar{\mcE} + C \sum_{k=0}^{\infty} \eps^{(1+\delta)^{k}/2} \\
& \leq C \bar{\mcE} +  2C \eps^{1/2} + \leq 2C \bar{\mcE}.
\end{align*}
In particular one has
\begin{align*}
\sum_{n=4}^{\infty}\|w_{(n)}\|_{C^0}
 \leq 2C \eps^{1/2}.
\end{align*}
For $n=1,2,3$, let $\varphi \in C_c^{\infty}(\Rp \times \T^2 \times \Rp)$ be a smooth test function. Recall that
\begin{gather*}
w_{(n)} = \sum_I e^{i\lambda_{(n)} \xi_{(n),I} } \tilde{W}_{(n),I},\\
\lambda_{(n)} = B^3 N_{(n)} \Xi_{(n-1)}, \quad e^{i\lambda_{(n)} \xi_{(n),I} } =   \frac{\nabla_x \xi_{(n),I} }{i \lambda_{(n)} |\nabla_x \xi_{(n),I}|^2}  \nabla_x(e^{i \lambda_{(n)} \xi_{(n),I}}).
\end{gather*}
Using integration by parts,  \eqref{est-tilde-W}, \eqref{ineq:eps-1}, \eqref{eq:N_(1)}, \eqref{eq:N_(n+1)-1}, and \eqref{eq:mcE_(1)}, one gets
\begin{align*}
&\left|\int w_{(n)}\varphi \,dxdydt\right| \leq  \sum_I \left|\int_{Q_I} e^{i\lambda_{(n)} \xi_{(n),I}}\tilde{W}_{(n),I}\varphi \,dxdydt\right| \\
&\leq C  \lambda_{(n)}^{-1}\sum_I  \|\nabla_x(\tilde{W}_{(n),I} \varphi)\|_{C^0}  |\supp \varphi \cap Q_I| \leq C\|\varphi\|_{C^1}  |\supp \varphi| N_{(n)}^{-2/3} \mcE_{(n-1),1}^{1/2}\\
&\leq  C \|\varphi\|_{C^1}  |\supp \varphi|\eps^{1/2}, \quad \text{ for } n=1,2,3.
\end{align*}
Hence for any $\varphi \in C_c^{\infty}(\Rp \times \T^2 \times \Rp)$ one has
\begin{align*}
\left|\int (u - \underline{u}, v - \underline{v})\varphi dx dy dt\right| &\leq \sum_{n=1}^{\infty}\left|\int w_{(n)}\varphi dx dy dt\right| \leq  C \|\varphi\|_{C^1}  |\supp \varphi| \eps^{1/2}.
\end{align*}
Let $(u_k,v_k)$ be the weak solutions corresponding to a sequence of positive numbers $\{\eps_k\}$ with $\eps_k < \bar{\eps}, \eps_k \to 0$.
It follows from the above estimates   and the standard density argument that $(u_k,v_k) \to (\underline{u},\underline{v})$ in the weak-$*$ topology on $L^\infty(\Rp \times \T^2 \times \Rp)$. This finishes the proof of Theorem \ref{Thm:Prandtl}.

\subsection{Proof of Corollary \ref{Corollary:ShearFlow}}
Let $\phi_{\varepsilon}(t,x,y)$ be a smooth bump function supported in a small ball $B_{\varepsilon}(t_0,x_0,y_0) \subset B_{2\varepsilon}(t_0,x_0,y_0) \subset  \Rp \times \T^2 \times \Rp$, such that $\ptl_{yy}^2 \ptl_{x_2} \phi_{\varepsilon}$ changes signs. Set $(\underline{u},\underline{v}) = (u_S,0) + (\ptl_{y} \ptl_{x_2} \phi_{\varepsilon}, 0, -\ptl_{x_1} \ptl_{x_2} \phi_{\varepsilon})$. It is clear that $(\underline{u},\underline{v})$ satisfies the conditions \eqref{eq:div-bu-bv} and \eqref{u-u_S}.
Applying  Theorem \ref{Thm:Prandtl} to $(\underline{u},\underline{v})$ with $\rho = \varepsilon$, one can obtain a sequence of H\"{o}lder continuous weak solutions $\{(u_k,v_k)\}_{k=1}^{\infty}$ to the system \eqref{eq:Prandtl} satisfying the estimates \eqref{ineq:u-u_S}, such that
\begin{align*}
\supp_{t,y} (u_k-u_S,v_k) &\subset \supp_{t,y} (u_k-\underline{u},v_k-\underline{v}) \cup \supp_{t,y} (\ptl_{y} \phi_{\varepsilon}, 0, -\ptl_{x_1} \phi_{\varepsilon})\\
& \subset N(\text{supp}_{t,y}(\phi_{\varepsilon});\varepsilon,\varepsilon^{1/2}) \subset B_{2\varepsilon}(t_0,y_0) \subset \Rp  \times \Rp,
\end{align*}
and $(u_k,v_k) \rightharpoonup (\underline{u},\underline{v})$ in the weak-$*$ topology on $L^\infty(\Rp \times \T^2 \times \Rp)$.
Furthermore,
\begin{align*}
\ptl_y u_k  \rightharpoonup \ptl_y u_S + (\ptl_{yy}^2 \ptl_{x_2} \phi_{\varepsilon},0), \quad v_k \rightharpoonup - \ptl_{x_1} \ptl_{x_2} \phi_{\varepsilon},
\end{align*}
in the sense of distribution.
Thus for $k$ sufficiently large and $\eps$ sufficiently small, $u_k$ is not monotonic in $y$ and $(u_k,v_k)$ has motion transverse to the outflow $(U,0)$.

\subsection{Proof of Theorem \ref{Thm:Euler-partial}}\label{sec:Euler-partial}
The proof of Theorem \ref{Thm:Euler-partial} is just a slight modification of the proof of Theorem \ref{Thm:Prandtl}. We outline the main differences in the constructions. Similar to \eqref{eq:PrStress} we consider the following approximate system
\begin{align}\label{eq:EPStress}
\begin{cases}
\ptl_t u_{(n)} + \nabla \cdot (u_{(n)} \otimes u_{(n)} )  - \ptl_{yy}^2 u_{(n)} + \nabla P_{(n)} = \nabla \cdot R_{(n)},\\
\nabla \cdot u_{(n)} = 0,
\end{cases}
\end{align}
where the stress $R = \begin{pmatrix} S & Y\\ Y^t & r \end{pmatrix}$ is a $3 \times 3$ symmetric matrix.
The main difference is that we have to add an extra correction to eliminate the $r$ component of the stress.  Definition \ref{Def:FE-Levels} of frequency-energy level is unchanged except replacing $(u,v)$ by $u$, $(S,Y)$ by $(S,Y,r)$.
Then we have the following variant of Lemma \ref{Lemma:Iteration}.
\begin{Lemma}\label{Lemma:Iteration'}
	Given $\vartheta> 0$, there exists a constant $C_\vartheta$ such that the following holds. Suppose that $(u,R)=(u,S,Y,r)$ is a smooth solution  to \eqref{eq:EPStress} with frequency-energy levels below $(\Xi,\mcE)$, with $\ell := \Xi^{-1/2}\mcE_u^{-1/4} \leq 1$.
	Let $\e(t)$  be a given non-negative function satisfying
	\begin{align}
	\e(t) \geq 4 \mcE_1 \text{ on }N(\text{supp}_{t} R;\ell^2), \quad N(\supp e, 50\ell^2) \subset \Rp, \label{supp-e'}\\
	\| (\frac{d}{dt})^{\alpha} (\e^{1/2})\|_{L^{\infty}} \leq C_{\alpha}  \ell^{-2\alpha}\mcE_1^{1/2},  \quad 0 \leq \alpha \leq 1.
	\label{est-e'}
	\end{align}
	Then for any $N > 0$  satisfying \eqref{ineq:N}，
	there exists a smooth solution $(\tilde{u},\tilde{R})$ to the system \eqref{eq:EPStress} with frequency-energy levels below  $(\tilde{\Xi},\tilde{\mcE})=(\tilde{\Xi},\tilde{\mcE_u},\tilde{\mcE_1},\tilde{\mcE_2},\tilde{\mcE_3})$ as given in \eqref{tilde-FE-levels}.
	Furthermore, the correction $w = \tilde{u} - u$ satisfies the estimates \eqref{est-w}, and
	\begin{align}
	\text{supp}_{t} (w,\tilde{R}) \subset  N(\supp \e;\ell^2). \label{supp-w-tildeR'}
	\end{align}
\end{Lemma}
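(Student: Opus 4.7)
The plan is to mirror the proof of Lemma~\ref{Lemma:Iteration} step by step, with $u=(u^1,u^2,u^3)$ now a divergence-free $3$-vector. The coarse-flow mollification $u_\ell$, the space-time partitions of unity $\eta_{\kappa_0}\psi_\kappa$, the transport-adapted cubes $Q_\kappa$, and the linear phase functions $\xi_I=s(I)[\kappa(I)]\f_1^\perp\cdot(x-u_{\kappa(I)}t)$ are reused verbatim. Since $\xi_I$ is independent of $y$, the principal waves $w_I=e^{i\lambda\xi_I}\tilde{W}_I$ continue to oscillate only horizontally; the identities \eqref{def:w_I} applied with $\nabla=(\nabla_x,\partial_y)$ acting on $\R^3$-valued profiles produce exactly divergence-free corrections whose leading order is the projection of $W_I$ onto $\ker(\nabla\xi_I\cdot\,\cdot)$, which equals $W_I$ itself provided both orthogonalities $\f_1\cdot\nabla_x\xi_I=0$ and $\partial_y\xi_I=0$ hold.

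\textbf{The extra correction for $r$.} Setting $A_I=(a_I\f_1,b_I)$ with $a_I=\sqrt{(\e+S_1)(q_{\kappa(I)})/2}$ and $b_I=Y_1(q_{\kappa(I)})/(2a_I)$ as in \eqref{def:a} zeroes the $S_1$- and $Y_1$-slots of $\sum_I\tilde{W}_I\otimes\tilde{W}_{\bar{I}}$ but leaves the unmatched residue $Y_1^2/(\e+S_1)$ in the $(3,3)$-slot, which must be reconciled with the scalar $r_1$. To handle this, I would introduce a second family of purely-vertical auxiliary waves $w_I^\natural=e^{i\lambda\xi_I^\natural}(0,0,c_I\eta_I\psi_I)+\delta w_I^\natural$ supported on the same cubes $Q_\kappa$, with phase functions $\xi_I^\natural$ constructed as in \eqref{def:xi_I} but with a shifted integer label $[\kappa(I)]^\natural$ drawn from a range disjoint from all combinations $[\kappa(J)]\pm[\kappa(J')]$ appearing in \eqref{ineq:xi_I-xi_J}, so the vertical family is frequency-separated from itself and from the main family. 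The scalar $c_I$ is fixed by the equation $2c_I^2=(\e+r_1)(q_{\kappa(I)})-2b_I^2$, solvable with $|c_I|\sim\mcE_1^{1/2}$ because $\e\geq 4\mcE_1$ and $|S_1|,|Y_1|,|r_1|\leq\mcE_1$ force $(\e+r_1)(\e+S_1)\geq 9\mcE_1^2\geq Y_1^2$. The background offset added to the stress is now $\e\cdot I_3$, which enters the mollification-error piece $R_M$ through $(\e_\ell-\e)I_3$ (alternatively, since $\nabla\cdot(\e_\ell I_3)=\nabla\e_\ell$, it may be absorbed directly into the free pressure gradient $\nabla P$ in \eqref{eq:EPStress}, a luxury not available in the Prandtl scheme).

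\textbf{Estimates and the main obstacle.} With $(a_I,b_I,c_I)$ fixed, the five-piece decomposition $\tilde{R}=R_S+R_M+R_H+R_T+R_L$ is set up exactly as in Section~\ref{sec:corrections}: $R_S$ now includes the augmented cancellation involving $w^\natural\otimes w^\natural$ along with the main $w_I\otimes w_{\bar{I}}$, while cross-interactions between the main and auxiliary families fall into $R_H$ by frequency separation. The divergence equations \eqref{eq:R_H}--\eqref{eq:R_L} are solved via the obvious $3$-vector source analog of Lemma~\ref{Lemma:SolveSymmDiv}, yielding symmetric $3\times 3$ solutions. All velocity and stress estimates from Section~\ref{sec:estimates} transfer verbatim since $c_I$ obeys the same $O(\mcE_1^{1/2})$ pointwise and transport-derivative bounds as $a_I$ and $b_I$, and the vertical waves oscillate at the same frequency $\lambda$; the support condition \eqref{supp-w-tildeR'} follows from the same geometry as \eqref{supp-w-tildeR}. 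The main obstacle is purely combinatorial: confirming that the integer labels $[\kappa(I)]^\natural$ can be chosen (for instance on a disjoint range such as $\{17,\dots,32\}$ with the main family using $\{1,\dots,16\}$) so that all four types of phase pair $\xi_I^{\#_1}+\xi_J^{\#_2}$ with $\#_1,\#_2\in\{\text{main},\natural\}$ and $J\neq\bar{I}$ remain frequency-nonzero, which is a direct analog of \eqref{def:[kappa]} and entails no new technical ingredient.
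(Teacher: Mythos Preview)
Your proposal is correct and takes essentially the same approach as the paper: the paper also introduces a second family of purely vertical waves $G_I=\eta_I\psi_I(0,0,g_I)$ with $g_I=\sqrt{(e+r)(q_I)/2-b_I^2}$ (exactly your $c_I$), achieving frequency separation by the scaled phase $100\,\xi_I$ rather than by shifted integer labels---a cosmetic difference. The paper adds $e_\ell$ only to the $\f_1\otimes\f_1$ and $(3,3)$ diagonal slots rather than to the full identity, but as you observe, with a free pressure this distinction is immaterial.
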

\begin{proof}[Proof of Lemma \ref{Lemma:Iteration'}]
One can follow closely the proof of  Lemma \ref{Lemma:Iteration} with a few modifications.
Let $H, H' \in \mathbb{Z}$ satisfy
\begin{align*}
2^{-H}  \leq \ell_x < 2^{-(H-1)}, \quad 2^{-H'}  \leq \ell_y < 2^{-(H'-1)}.
\end{align*}
For $\tilde{\kappa} = (\kappa_1,\kappa_2, \kappa_3) \in  (\mathbb{Z}/2^{H}\mathbb{Z})^2 \times (\mathbb{Z}/2^{H'}\mathbb{Z})$, set
\[ \tilde{\psi}_{\tilde{\kappa}}(x_1,x_2,y) = \eta_H\left(2^H(x_1-\kappa_1 2^{-H})\right)\eta_H\left(2^H(x_2-\kappa_2 2^{-H})\right)\eta_{H'}\left(2^{H'}(y-\kappa_3 2^{-H'})\right),\]
and for $\kappa = (\kappa_0,\tilde{\kappa}) \in \mathbb{Z} \times  (\mathbb{Z}/2^{H}\mathbb{Z})^2 \times (\mathbb{Z}/2^{H'}\mathbb{Z})$,  let  $\psi_{\kappa}$ be defined by
\begin{align}
(\ptl_t + u_{\ell} \cdot \nabla) \psi_{\kappa} = 0, \label{eq:psi_kappa'} \quad
\psi_{\kappa}(\kappa_0\tau,\cdot) = \tilde{\psi}_{\tilde{\kappa}}(\cdot),
\end{align}
where $u_{\ell} = u * \tilde{\eta}_{\ell} $.
The constructions of localized waves are modified as follows.
For each index $I=(\kappa(I),s(I)) \in \mathbb{Z} \times  (\mathbb{Z}/2^{H}\mathbb{Z})^2 \times (\mathbb{Z}/2^{H'}\mathbb{Z}) \times \{ +,- \} $, set
\begin{equation*} W_I = (U_I,V_I) = \eta_I \psi_I A_I = \eta_I \psi_I (a_I\f_1,b_I), \quad G_I = \eta_I \psi_I  (0,0,g_I) = \tilde{g}_I \vec{e}_3,
\end{equation*}
where $a_I, b_I$ are defined in \eqref{def:a}, $g_I = \sqrt{\frac{(e + r)(q_I)}{2}-b_I^2}$ and $\vec{e}_3 = (0,0,1)$. Similar to the estimates \eqref{est-omega}, by \eqref{supp-e'}, one can verify that $g_I$ is well-defined with
\begin{align}
g_I \leq C \mcE_1^{1/2}. \label{est-omega'}
\end{align}
Define the correction $w = \tilde{u} - u$ as
\begin{align*}
w =  \sum_I (e^{ i \lambda \xi_I}\tilde{W}_I + e^{i\lambda 100 \xi_I}\tilde{G}_I) = \sum_I (e^{ i \lambda \xi_I}(W_I +
\delta W_I) + e^{i\lambda 100 \xi_I}(G_I +
\delta G_I)),
\end{align*}
where $\xi_I=\xi_I(x,t)$ is defined in \eqref{def:xi_I} (with $u_{\kappa(I)}$ replaced by $(u^1,u^2)(q_{\kappa(I)})$), $\delta W_I$ and $\delta G_I$ are small corrections to ensure that $\nabla \cdot w = 0$. The explicit expressions are given in \eqref{def:w_I}, (with $\frac{1}{\lambda^2|\nabla \xi_I|^2}e^{i\lambda\xi_I}W_I$ replaced by $\frac{1}{\lambda^2|\nabla \xi_I|^2}e^{i\lambda\xi_I}W_I + \frac{1}{\lambda^2|100\nabla \xi_I|^2}e^{i\lambda 100 \xi_I}G_I$). Then it can be checked that $w$ and $\tilde{u}$ satisfy the same estimates as proved in Section \ref{subsec:est-velocity}.

Plugging in the correction $\tilde{u} = u+w$ and using the expressions \eqref{eq:mbR} as before, one can decompose the new stress $\tilde{R}$ as:
\begin{align}
\tilde{R}
&= -\begin{pmatrix}
\sum_{i\neq 1}S_{i}\f_{i}\otimes \f_{i} & \sum_{i\neq 1}Y_{i}\f_{i}\\
\sum_{i\neq 1}Y_{i}\f_{i}^t & 0
\end{pmatrix} + (R_S + R_{M}  + R_H + R_T + R_{L})
\end{align}
such that (denoting $\tilde{W}_I = (\tilde{U}_I, \tilde{V}_I)$)
\begin{align*}
\nabla \cdot R_S =&   \nabla \cdot \begin{pmatrix}
-(e_{\ell}+S_{\ell}) \f_1 \otimes \f_1 + \sum_I  \tilde{U}_I \otimes \tilde{U}_{\bar{I}} &   -Y_{\ell} \f_1 + \sum_I \tilde{V}_I \tilde{U}_{\bar{I}}\\
(-Y_{\ell} \f_1 + \sum_I \tilde{V}_I \tilde{U}_{\bar{I}})^t & -(e_{\ell}+r_{\ell}) + \sum_I  \tilde{g}_I \tilde{g}_{\bar{I}} + \tilde{V}_I \tilde{V}_{\bar{I}}
\end{pmatrix},   \\
\nabla \cdot R_{M}=& \nabla \cdot  \begin{pmatrix}
(e_{\ell}-e(t) + S_\ell-S_1) \f_1 \otimes \f_1  & (Y_\ell-Y_1) \f_1\\
(Y_\ell-Y_1) \f_1^t & e_{\ell}-e(t) + r_\ell-r
\end{pmatrix}  \nonumber \\
&  \quad + \nabla \cdot  ((u-u_{\ell}) \otimes w + w \otimes (u-u_{\ell})),  \\
\nabla \cdot R_H =  & \sum_{J \neq \bar{I}}\nabla \cdot(e^{i \lambda (\xi_I+\xi_J)} \tilde{W}_I \otimes \tilde{W}_J  + e^{i100\lambda(\xi_I + \xi_J)}\tilde{G}_I \otimes \tilde{G}_J) \\
&  \quad + \sum_{I,J}\nabla \cdot (e^{i\lambda(\xi_I + 100\xi_J)}\tilde{W}_I \otimes \tilde{G}_J + e^{i\lambda(\xi_J + 100\xi_I)}\tilde{G}_I \otimes \tilde{W}_J) ,    \\ \nabla \cdot R_T=& (\ptl_t + u_{\ell} \cdot \nabla  - \ptl_{yy}^2) w, \quad \nabla \cdot R_{L}=  \nabla \cdot ( w \otimes u_{\ell}),
\end{align*}
where, similar to \eqref{def:R_ell}, the mollification of $r$ is defined as
\begin{align}
r_{\ell}(t,z)&= \sum_{\kappa} \eta_{\kappa}^2\psi_{\kappa}^2(t,z)r(q_{\kappa}).
\end{align}

The terms in $R_M$ are treated exactly as in Section \ref{sec:moll-error}. For $R_S$, note that
\begin{align*}
-(e_{\ell}+r_{\ell}) + \sum_I  \tilde{g}_I \tilde{g}_{\bar{I}} + \tilde{V}_I \tilde{V}_{\bar{I}} = &= \sum_{\kappa}\eta_{\kappa_0}^2\psi_{\kappa}^2(-(\e+r)(q_{\kappa}) + 2g_{\kappa}^2 + 2b_{\kappa}^2) = 0.
\end{align*}
Hence one can show that the main part of $R_S$ vanishes as in Section \ref{sec:stress-term}.

Analogous to Lemma \ref{Lemma:SolveSymmDiv}, for any given smooth vectors $H = e^{i \lambda \xi}(h^1,h^2,h^3)$ supported in $\hat{Q}=\hat{Q}_{u_\ell}(\tau, 3 \ell_x, 3 \ell_x, 3 \ell_y;q_{\kappa})$, satisfying the estimates \eqref{est-xi}, \eqref{est-h_I-y} and the compatibility conditions
\begin{align}
\int H^l dxdy = 0, \quad  \int z^j H^l - z^l H^jdxdy = 0, \quad j,l=1,2,3, \label{H^l-conditions-3'}
\end{align}
there exists a symmetric $3 \times 3$ matrix $T=T^{jl} \in C_c^{\infty}(\hat{Q})$ solving $\nabla \cdot T = H$, with the estimates \eqref{est-R-div}.
The terms $R_T$ and $R_L$ can be handled 	exactly as before.  From the definitions \eqref{def:xi_I} and \eqref{def:[kappa]}, it holds that $|\nabla(\xi_I + 100\xi_J)| \geq 1$. In view of  the orthogonality conditions
\begin{align*}
\nabla \xi_J \cdot \vec{e}_3 = 0, \quad \nabla \xi_J \cdot A_I = 0,
\end{align*}
one can treat the interaction terms $R_H$ as in Section \ref{sec:R_H}.
\end{proof}
Employing the scheme in Section \ref{subserc:proofPrandtl} with Lemma \ref{Lemma:Iteration'}, starting with the trivial solution $u = 0$,  one can obtain a non-trivial H\"{o}lder continuous weak solutions $u$ to \eqref{eq:Euler-partial}  supported in a compact time interval with estimates \eqref{ineq:u}.  This proves Theorem \ref{Thm:Euler-partial}.

\appendix

\section{Transport estimates} \label{sec:transport-estimates}

\begin{proof}[Proof of Lemma \ref{Lemma:transport-scaling}]
	First we show that for $\ell_i = \delta_U = \delta_f = 1$, the following estimates hold
	\begin{align}
	\sup_{1 \leq |\alpha| \leq s}\|\nabla_{z}^{\alpha} f(t,\cdot)\|_{L^\infty}   \leq \tilde{C}_{s}, \quad \text{ for } t \in [-1,1], s=0,1,\cdots,m, \label{ineq:ptl_f}
	\end{align}
	where  $\tilde{C}_s = C(C_{\alpha},0 \leq |\alpha| \leq s)$ denote generic functions of $C_{\alpha}$.
	By the time reversal symmetry $t \to -t$, it suffices  to show \eqref{ineq:ptl_f} for $t \in [0,1]$.
	
	For $s=0$, since $f$ is constant on the integral curves of $\ptl_t + \bar{U} \cdot \nabla_z$, for any $t$,
	\begin{align*}
	\|f(t,\cdot)\|_{L^{\infty}} = \|f_0\|_{L^{\infty}}.
	\end{align*}
	Suppose that the estimates \eqref{ineq:ptl_f} for $m = s-1$ have been obtained. For any multi-index $|\alpha|=s$, applying $\ptl_z^{\alpha}$ to the equation \eqref{eq:transport-f} yields
	\begin{align*}
	(\ptl_t + \bar{U} \cdot \nabla_z) \ptl_z^{\alpha} f &= -\left(\ptl_z^{\alpha}(\bar{U}^i \ptl_{z_i}f) - \bar{U}^i \ptl_{z_i}\ptl_z^{\alpha}f \right) = -\sum_{\mathclap{\alpha_1+\alpha_2=\alpha,|\alpha_2|\leq s-1}}C_{\alpha_1,\alpha_2}(\ptl_z^{\alpha_1}\bar{U}^i) (\ptl_z^{\alpha_2} \ptl_{z_i}f).
	\end{align*}
	Multiplying by $\ptl_z^{\alpha} f$ and summing for all $|\alpha|=s$, one gets
	\begin{align*}
	\frac{1}{2}\frac{D}{Dt}|\nabla_z^{s} f|^2 = -\sum_{|\alpha_1+\alpha_2|=s} & \Big\{\sum_{|\alpha_2|=s-1}C_{\alpha_1,\alpha_2}(\ptl_z^{\alpha_1+\alpha_2}f)(\ptl_z^{\alpha_2} \ptl_{z_i}f)(\ptl_z^{\alpha_1}\bar{U}^i) \\
	&+ \sum_{|\alpha_2|\leq s-2}C_{\alpha_1,\alpha_2}(\ptl_z^{\alpha_1+\alpha_2}f)(\ptl_z^{\alpha_2} \ptl_{z_i}f)(\ptl_z^{\alpha_1}\bar{U}^i) \Big\},
	\end{align*}
	where $\frac{D}{Dt} = \ptl_t + \bar{U} \cdot \nabla_z$. Hence
	\begin{align*}
	\frac{1}{2}\frac{D}{Dt}|\nabla_z^{s} f|^2 & \lesssim  |\nabla_z \bar{U}||\nabla_z^{s} f|^2+ \sum_{2 \leq j \leq s}|\nabla_z^{j}\bar{U}||\nabla_z^{s-j} f||\nabla_z^{s} f|\\
	& \leq C\left((\sup_{|\alpha| = 1}C_{\alpha}) |\nabla_z^{s} f|^2 + (\sup_{ 2 \leq |\alpha| \leq s}  C_{\alpha}) \tilde{C}_{s-2}   |\nabla_z^{s} f| \right).
	\end{align*}
	The estimates \eqref{ineq:ptl_f} follow from the Gronwall's inequality.
	
	Now let $\ell_1,\cdots,\ell_d, \delta_U, \delta_f$ be positive numbers. Set
	\begin{equation}
	\tau_0 = \delta_{U}^{-1} \min_i \ell_i, \quad (t',z_1',\cdots,z_d')= (\frac{t}{\tau_0},\frac{z_1}{\ell_1},\cdots,\frac{z_d}{\ell_d}).
	\end{equation}
	Then the function $h(t',z') = \delta_f^{-1} f(\tau_0 t',\ell_1 z_1',\cdots,\ell_d z_d')$ solves
	\begin{align*}
	(\ptl_{t'} + \bar{U}' \cdot \nabla_{z'})h = 0, \quad h|_{t'=0}=h_0(z') = \delta_f^{-1}f_0(\ell_1 z_1',\cdots,\ell_d z_d'),
	\end{align*}
	with
	\[ \bar{U}'=(\bar{U}_1',\cdots,\bar{U}_d')(t',z') = (\frac{\tau_0}{\ell_1}\bar{U}_1,\cdots,\frac{\tau_0}{\ell_d}\bar{U}_d)(\tau_0 t',\ell_1 z_1',\cdots,\ell_d z_d'). \]
	It follows from \eqref{ineq:U-f_0} that
	\begin{align*}
	\|\ptl_{z'}^{\alpha} h_0\|_{L^\infty}   &\leq C_{\alpha}, \quad \text{ for } 1 \leq |\alpha| \leq m,\\
	\|\ptl_{z'}^{\alpha} \bar{U}'\|_{L^\infty}&\leq  C_{\alpha}, \quad \text{ for } 0 \leq |\alpha| \leq m.
	\end{align*}
	It follows from \eqref{ineq:ptl_f} that, for $0 \leq |\alpha| \leq m$,
	\begin{align}
	\|\ptl_{z'}^{\alpha} h(t',\cdot)\|_{L^\infty}   \leq \tilde{C}_{\alpha}, \quad \text{ for } |t'| \leq 1.
	\end{align}
	This implies the estimates \eqref{ineq:ptl_z-f} for $f$.
\end{proof}


\begin{proof}[Proof of Lemma \ref{Lemma:flow-scaling}]
	Set
	\begin{equation}
	\begin{aligned}
	(z_1',\cdots,z_d')&= (\frac{z_1}{\ell_1},\cdots,\frac{z_d}{\ell_d}), \\ (\bar{U}_1',\cdots,\bar{U}_d')(t,z')&= (\frac{\bar{U}_1}{\ell_1},\cdots,\frac{\bar{U}_d}{\ell_d})(t,\ell_1 z_1',\cdots,\ell_d z_d'), \label{eq:z-U-scaling}
	\end{aligned}
	\end{equation}
	and denote by $\varphi$ the map $\varphi:(t,z) \to (t,z')$. Then $\varPhi_s:=\varphi \circ \Phi_s \circ \varphi^{-1}$ is the flow generated by
	$(\ptl_t + \bar{U}' \cdot\nabla_{z'})$ in the $(t,z')$ coordinates. Note that
	\begin{align*}
	\left| \frac{d}{ds}\sum_{i=1}^{d}|\varPhi_s^i(t,p') - \varPhi_s^i(t,q')|^2 \right| &= 2 \left| \sum_{i=1}^{d} \left(\bar{U}'_i(\varPhi_s(t,p'))-\bar{U}'_i(\varPhi_s(t,q'))\right) (\varPhi_s^i(t,p') - \varPhi_s^i(t,q')) \right| \\
	&\leq 2 \| \nabla_{z'} \bar{U}' \|_{L^\infty}   |\varPhi_s(t,p') - \varPhi_s(t,q')|^2.
	\end{align*}
	It follows from the Gronwall's inequality that
	\begin{align}
	|\varPhi_s(t,p') - \varPhi_s(t,q')| \leq e^{s\| \nabla_{z'} \bar{U}' \|_{L^\infty} }|p'-q'|. \label{ineq:tlPhi_s'}
	\end{align}
	Due to \eqref{ineq:U-p-q} and \eqref{eq:z-U-scaling}, it holds that
	\begin{align*}
	\|\nabla_{z'} \bar{U}'\|_{L^\infty}  \leq  A_1  (\min_i \ell_i)^{-1} \delta_{U}, \quad |p_i'-q_i'| \leq A_2.
	\end{align*}
	Taking $|s|  \leq \delta_{\bar{U}}^{-1} \min_i \ell_i$ in \eqref{ineq:tlPhi_s'}  yields
	\begin{align*}
	\sum \frac{(\Phi_s^i(t,p) - \Phi_s^i(t,q))^2}{\ell_i^2} =|\varPhi_s(t,p') - \varPhi_s(t,q')|^2 \leq (A_2 e^{A_1})^2.
	\end{align*}
	This shows the desired estimates \eqref{ineq:Phi_s}.
\end{proof}

\section{Proof of Lemma \ref{Lemma:OperatorSymmDiv}} \label{sec:pf-op-symm}

Given positive constants $\ell_1,\cdots,\ell_d$ and $z_0 \in \mathbb{R}^d$, set
\begin{align*}
Q(\ell_1,\cdots,\ell_d;z_0) = \{ z=(z^1,\cdots,z^d): |z^i - z^i_0| \leq \ell_i, i=1,\cdots,d \}.
\end{align*}

\begin{proof}[Proof of Lemma \ref{Lemma:OperatorSymmDiv}]
	Denote
	\begin{align}
	\bbarDt = \ptl_t + \bar{U}(\Phi_{t-t_0}(t_0,x_0,y_0)) \cdot \nabla_{x,y}.
	\end{align}
	Let $\tilde{\zeta}(x,y)$ be a smooth bump function such that
	\begin{align}
	\supp \tilde{\zeta} \subset  Q(\bar{\ell}_x,\bar{\ell}_x,\bar{\ell}_y;(t_0,x_0,y_0)), \quad
	\int \tilde{\zeta}(x,y)dxdy = 1, \label{supp-zeta'} \\
	\|\nabla_x^{\alpha} \ptl_y^{\beta}    \tilde{\zeta}\|_{C^0} \leq C_{\alpha,\beta}  \bar{\ell}_x^{-|\alpha|} \bar{\ell}_y^{-\beta} |Q(\bar{\ell}_x,\bar{\ell}_x,\bar{\ell}_y)|^{-1}, \quad |\alpha|,\beta \geq 0. \label{ineq:D-tilde-zeta}
	\end{align}
	Let $\zeta(t,x,y)$ be the transport of $\tilde{\zeta}$ by the flow of $\bbarDt$, i.e., $\zeta(t,x,y)$ solves
	\begin{align}
	\bbarDt \zeta(t,x,y) = 0, \quad \zeta(t_0,x,y) = \tilde{\zeta}(x,y). \label{eq:zeta}
	\end{align}
	Note that $\bbarDt(t,\cdot)$ is a constant vector field for any fixed $t$. It follows from \eqref{supp-zeta'} and \eqref{ineq:D-tilde-zeta} that
	\begin{align}
	\supp \zeta(t,\cdot) \subset Q(\bar{\ell}_x,\bar{\ell}_x,\bar{\ell}_y;\Phi_{t-t_0}(t_0,x_0,y_0)), \quad \int \zeta(t,x,y) dxdy = 1, \nonumber \\
	\|\nabla_x^{\alpha} \ptl_y^{\beta} \zeta(t,\cdot) \|_{C^0} \leq  C_{\alpha,\beta}  \bar{\ell}_x^{-|\alpha|} \bar{\ell}_y^{-\beta} |Q(\bar{\ell}_x,\bar{\ell}_x,\bar{\ell}_y)|^{-1}, \quad |\alpha|,\beta \geq 0. \label{est-zeta}
	\end{align}
	The following expression for $R^{kl}[H]$ is a slight modification of those given in \cite[Proposition 11.1]{IsettOh16}. Denote $z=(x,y)$. Let
	\begin{align*}
	R^{kl}[H] = R_0^{kl}[H]+R_1^{kl}[H]+R_2^{kl}[H], \quad \text{ for } k=1,2,3,l=1,2,
	\end{align*}
	where for $j,l = 1,2$,
	\begin{align*}
	R_0^{jl}[H] &= -\frac{3}{2}\int_{0}^{1} \int \zeta(t,\bar{z})\frac{(x^j-\bar{x}^j)}{\sigma}H^l(t,\frac{z-\bar{z}}{\sigma}+\bar{z})\frac{d \bar{z}}{\sigma^3} d \sigma\\
	& \quad -\frac{3}{2}\int_{0}^{1} \int \zeta(t,\bar{z}) \frac{(x^l-\bar{x}^l)}{\sigma}H^j(t,\frac{z-\bar{z}}{\sigma}+\bar{z})\frac{d \bar{z}}{\sigma^3} d \sigma, \\
	R_0^{3l}[H] &= -3\int_{0}^{1}\int \zeta(t,\bar{z}) \frac{y-\bar{y}}{\sigma}H^l(t,\frac{z-\bar{z}}{\sigma}+\bar{z})\frac{d \bar{z}}{\sigma^3} d \sigma,\\
	R^{jl}_1[H] &=  \frac{1}{2}\int_{0}^{1}\int \sum_{k=1}^{2}(\ptl_{x^k}\zeta)(t,\bar{z})\frac{(x^l-\bar{x}^l)(x^k-\bar{x}^k)}{\sigma^2}H^j(t,\frac{z-\bar{z}}{\sigma}+\bar{z})\frac{d \bar{z}}{\sigma^3} d \sigma\\
	&\quad + \frac{1}{2}\int_{0}^{1}\int \sum_{k=1}^{2} (\ptl_{x^k}\zeta)(t,\bar{z})\frac{(x^j-\bar{x}^j)(x^k-\bar{x}^k)}{\sigma^2}H^l(t,\frac{z-\bar{z}}{\sigma}+\bar{z})\frac{d \bar{z}}{\sigma^3} d \sigma,\\
	R^{jl}_2[H] &= -\int_{0}^{1}\int \sum_{k=1}^{2} (\ptl_{x^k}\zeta)(t,\bar{z})\frac{(x^j-\bar{x}^j)(x^l-\bar{x}^l)}{\sigma^2}H^k(t,\frac{z-\bar{z}}{\sigma}+\bar{z})\frac{d \bar{z}}{\sigma^3} d \sigma,\\
	R^{3l}_1[H] &= R^{3l}_2[H] = 0.
	\end{align*}
	It is clear from these definitions that $R^{kl}[H]$ depends linearly on $H$ and $R^{jl}[H]=R^{lj}[H]$ for $j,l=1,2$.  Furthermore, it is easy to verify that $R^{kl} \in C_c^{\infty}(\hat{Q})$.
	It follows from the proof of \cite[Proposition 11.1]{IsettOh16} that $R^{jl}[H]$ is a smooth solution to the divergence equation \eqref{eq:SymmDiv} with the commutating relations
	\begin{align}
	\bbarDt R^{jl}[H] = R^{jl}[\bbarDt H]. \label{eq:bbDtR}
	\end{align}
	The desired esimates follow from \eqref{est-H}, \eqref{est-zeta},  \eqref{est-barU}, \eqref{ineq:bartau} and \eqref{eq:bbDtR} as in the proof of \cite[Proposition 11.1]{IsettOh16}.
\end{proof}

\bigskip

{\bf Acknowledgement.}
The work was initiated as a part of the PhD thesis of the first author written under the supervision of the second author. The authors warmly thank Tao Tao and Liqun Zhang for very valuable discussions. The research are supported in part by Zheng
Ge Ru Foundation, Hong Kong RGC Earmarked Research Grants
CUHK4041/11P, CUHK4048/13P, CUHK-14305315, NSFC/ICG Joint Research Grant N-CUHK 443/14, a Focus Area Grant from The Chinese
University of Hong Kong, and NSFC Grants 11601258.

\begin{thebibliography} {99}

\bibitem{AWXY14}
R. Alexandre, Y.  Wang, C.  Xu, T. Yang,  \emph{Well-posedness of the Prandtl equation in Sobolev spaces}, Journal of the American Mathematical Society, 2014.

\bibitem{Buckmaster2013transporting}
T. Buckmaster, C. De~Lellis, P. Isett, L. Sz\'{e}kelyhidi, Jr.,
\emph{Anomalous dissipation for $1/5$-H\"{o}lder Euler flows}, Annals of Mathematics 182, no. 1 (2015): 127--172.

\bibitem{Buckmaster2014}
T. Buckmaster, C. De~Lellis, L. Sz\'{e}kelyhidi,
\emph{Dissipative Euler flows with Onsager-critical spatial regularity},  Comm. Pure Appl. Math. 69 (2016), no. 9, 1613–1670.

\bibitem{BDSV17}
T. Buckmaster, C. De~Lellis,  L. Sz\'{e}kelyhidi, V. Vicol,
\emph{Onsager's conjecture for admissible weak solutions}, arXiv preprint, 2017.

\bibitem{BV17}
T. Buckmaster, V. Vicol,
\emph{Nonuniqueness of weak solutions to the Navier-Stokes equation}, arXiv preprint, 2017.

\bibitem{CET94}
P. Constantin,  W.  E,  E.  Titi,  {\em Onsager's conjecture on the energy conservation for solutions of Euler's equation},
Comm. Math. Phys.  165  (1994),  no. 1, 207--209.

\bibitem{ContidLSz12}
S. Conti, C. De Lellis, L. Sz\'{e}kelyhidi, Jr. \emph{ h-principle and rigidity for $C^{1,\alpha}$ isometric embeddings}, Nonlinear partial differential equations, 83--116, Abel Symp., 7, Springer, Heidelberg, 2012.

\bibitem{DaneriSzekelyhidi16}
 S. Daneri,  L. Sz\'{e}kelyhidi, Jr., \emph{Non-uniqueness and h-principle for H\" older-continuous weak solutions of the Euler equations}. Arch. Ration. Mech. Anal. 224 (2017), no. 2, 471--514.

\bibitem{dLSz1}
C. De~Lellis, L. Sz\'{e}kelyhidi, Jr.,
\emph{The {E}uler equations as a differential inclusion}.
\newblock { Ann. of Math.} \textbf{170} (2009), no.~3, 1417--1436.

\bibitem{dLSz4}
C. De~Lellis, L. Sz\'{e}kelyhidi, Jr.,
\emph{Dissipative continuous Euler flows},
\newblock {\em Inventiones mathematicae}. \textbf{193} (2013), no.~2, 377--407.
\bibitem{dLSz5}
C. De~Lellis, L. Sz\'{e}kelyhidi, Jr.,
\emph{Dissipative Euler flows and Onsager's conjecture},
J. Eur. Math. Soc. (JEMS) 16 (2014), no. 7, 1467--1505.

\bibitem{dLSz16}
C. De~Lellis, L. Sz\'{e}kelyhidi, Jr., \emph{High dimensionality and h-principle in PDE}. Bull. Amer. Math. Soc. (N.S.) 54 (2017), no. 2, 247–-282.

\bibitem{EE97} W. E, B. Engquist, \emph{Blowup of solutions of the unsteady Prandtl's equation}. Comm. Pure Appl. Math.  50  (1997),  no. 12, 1287--1293.

\bibitem{FTZ16}
M. Fei, T. Tao, Z. Zhang, \emph{On the zero-viscosity limit of the Navier-Stokes equations in the half-space}, arXiv preprint, 2016.

\bibitem{GD10}
D. G\'{e}rard-Varet, E. Dormy,
\emph{On the ill-posedness of the Prandtl equation}, J. Amer. Math. Soc. 23 (2010), no. 2, 591--609

\bibitem{GMM16}
D. G\'{e}rard-Varet,  Y. Maekawa,  N. Masmoudi,
\emph{Stability of Prandtl Expansions for 2D Navier-Stokes}. arXiv preprint arXiv:1607.06434, 2016.

\bibitem{GM15}
D. G\'{e}rard-Varet,  N. Masmoudi,
{\em Well-posedness for the Prandtl system without analyticity or monotonicity}. Ann. Sci. \'{E}c. Norm. Sup\'{e}r. (4) 48 (2015), no. 6, 1273-1325.

\bibitem{Grenier00} E. Grenier, \emph{On the nonlinear instability of Euler and Prandtl equations}. Comm. Pure Appl. Math.  53  (2000),  no. 9, 1067--1091.

\bibitem{GN11} Y. Guo, T. Nguyen, \emph{A note on Prandtl boundary layers}. Comm. Pure Appl. Math. 64 (2011), no. 10, 1416-1438.

\bibitem{Isett12}
P. Isett,
\emph {H\"{o}lder Continuous Euler Flows in Three Dimensions with
	Compact Support in Time},
arXiv:1211.4065, 2012.

\bibitem{Isett16}
P. Isett \emph{A Proof of Onsager's Conjecture}, arXiv preprint arXiv:1608.08301, 2016.

\bibitem{IsettOh16}
Isett, Philip, Oh, Sung-Jin \emph{On Nonperiodic Euler Flows with H\"{o}lder Regularity},
Arch. Ration. Mech. Anal.  221  (2016),  no. 2, 725--804.

\bibitem{IsettVicol}
{P. {Isett}, {V. Vicol}},
\emph{H\"{o}lder Continuous Solutions of Active Scalar Equations}. Ann. PDE 1 (2015), no. 1, Art. 2, 77 pp.

\bibitem{LiYang16} W. Li,  T. Yang,  \emph{Well-posedness in Gevrey space for the Prandtl equations with non-degenerate critical points}. arXiv preprint arXiv:1609.08430, 2016.

\bibitem{LWY14a} C. Liu, Y. Wang, T. Yang, \emph{A well-posedness theory for the Prandtl equations in three space variables}.  Adv. Math. 308 (2017), 1074--1126.

\bibitem{LWY14b} C. Liu, Y. Wang, T. Yang, \emph{On the ill-posedness of the Prandtl equations in three space dimensions}. Arch. Ration. Mech. Anal. 220 (2016), no. 1, 83–-108.

\bibitem{Maekawa14}  Y. Maekawa,  \emph{On the inviscid limit problem of the vorticity equations for viscous incompressible flows in the half-plane}. Comm. Pure Appl. Math.  67  (2014),  no. 7, 1045--1128

\bibitem{MS17}
S. Modena, L. Sz\'{e}kelyhidi,
\emph{Non-uniqueness for the transport equation with Sobolev vector fields}, arXiv preprint, 2017.

\bibitem{Moore56}
F. K. Moore,
\emph{Three-dimensional boundary layer theory},
Adv. Appl. Mech., 4(1956), 159--228.

\bibitem{MW12}
N. Masmoudi, T. Wong, \emph{Local-in-time existence and uniqueness of solutions to the Prandtl equations by energy methods}. Comm. Pure Appl. Math. 68 (2015), no. 10, 1683--1741.

\bibitem{Nash54}
J. Nash,
\emph{{$C\sp 1$} isometric imbeddings}, Ann. of Math. (2)  60,  (1954). 383--396.

\bibitem{OS99} O. Oleinik, V. Samokhin, \emph{Mathematical models in boundary layer theory}. Applied Mathematics and Mathematical Computation, 15. Chapman \& Hall/CRC, Boca Raton, FL, 1999.

\bibitem{SC98}  M. Sammartino,  R. Caflisch, \emph{Zero viscosity limit for analytic solutions, of the Navier-Stokes equation on a half-space. I. Existence for Euler and Prandtl equations}. Comm. Math. Phys. 192 (1998), no. 2, 433-461.

\bibitem{Scheffer}
V. Scheffer,
\emph{An inviscid flow with compact support in space-time},
\newblock { J. Geom. Anal.}, \textbf{3} (1993), no.~4, 343--401.

\bibitem{TZ17} T. Tao, L.Q. Zhang, \emph{H\"{o}lder continuous solutions of Boussinesq equation with compact support},
J. Funct. Anal. 272 (2017), no. 10, 4334-4402.

\bibitem{XZ04}  Z. Xin, L. Zhang, \emph{On the global existence of solutions to the Prandtl's system}. Adv. Math. 181 (2004), no. 1, 88-133.

\bibitem{XZZ}Z. Xin, L. Zhang, J. Zhao, \emph{Global well-posedness for the two-dimensional Prandtl’s boundary layer equations}, preprint

\end {thebibliography}

\end{document}